\documentclass[10pt,a4paper]{article}      

\usepackage{mathtools}

\mathtoolsset{showonlyrefs}

\usepackage{mathrsfs}

\usepackage{amsfonts, amsmath}

\usepackage{amssymb,amsthm}

\usepackage{
	latexsym,
	nicefrac,
}

\usepackage[usenames]{color}

\usepackage{url}

\definecolor{darkgreen}{rgb}{0,0.5,0}
\definecolor{darkred}{rgb}{0.7,0,0}
\usepackage[colorlinks, 
citecolor=darkgreen, linkcolor=darkred
]{hyperref}

\textwidth=148mm   
\textheight=228mm
\topmargin=-0.4in
\oddsidemargin=+0.2in
\evensidemargin=+0.2in


\parindent=0pt
\parskip=10pt

\theoremstyle{plain}
\newtheorem{lemma}{Lemma}[section]
\newtheorem{thm}[lemma]{Theorem}
\newtheorem{prop}[lemma]{Proposition}
\newtheorem{cor}[lemma]{Corollary}

\theoremstyle{definition}
\newtheorem{defn}[lemma]{Definition}

\newtheorem{rmk}[lemma]{Remark}

\newcommand{\N}{\ensuremath{{\mathbb N}}}

\newcommand{\tensor}{\otimes}
\newcommand{\lap}{\Delta}



\DeclareMathOperator{\inj}{inj}

\DeclareMathOperator{\Sym}{Sym}

\newcommand{\norm}[1]{\left \Vert#1 \right \Vert}  


\newcommand{\Rm}{{\mathrm{Rm}}}

\newcommand*\ddt{\frac{\mathrm{d}}{\mathrm{d}t}}
\newcommand*\pddt{\frac{\partial}{\partial t}}


\newcommand{\pt}{\partial_t}

\newcommand{\eps}{\varepsilon}



\newcommand*\tr{\mathop{\mathrm{tr}}\nolimits}
\newcommand{\Tau}{{\cal T}}

\usepackage{parskip}

\title{Limiting Behaviour of the Teichm\"uller Harmonic Map Flow}
\author{Tobias Huxol}
\date{\today}
\begin{document}
	\maketitle
\begin{abstract}
	In this paper we study the Teichm\"uller harmonic map flow as introduced by Rupflin and Topping \cite{RT}. It evolves pairs of maps and metrics $(u,g)$ into  branched minimal immersions, or equivalently into weakly conformal harmonic maps, where $u$ maps from a fixed closed surface $M$ with metric $g$ to a general target manifold $N$. It arises naturally as a gradient flow for the Dirichlet energy functional viewed as acting on equivalence classes of such pairs, obtained from the invariance under diffeomorphisms and conformal changes of the domain metric. 
	
	In the construction of a suitable inner product for the gradient flow a choice of relative weight of the map tangent directions and metric tangent directions is made, which manifests itself in the appearance of a coupling constant $\eta$ in the flow equations.
	We study limits of the flow as $\eta$ approaches 0, corresponding to slowing down the evolution of the metric.
	
	We first show that given a smooth harmonic map flow on a fixed time interval, the Teichm\"uller harmonic map flows starting at the same initial data converge uniformly to the underlying harmonic map flow when $\eta \downarrow 0$.
	
	Next we consider a rescaling of time, which increases the speed of the map evolution while evolving the metric at a constant rate. We show that under appropriate topological assumptions, in the limit the rescaled flows converge to a unique flow through harmonic maps with the metric evolving in the direction of the real part of the Hopf differential.
\end{abstract}
\section{Introduction}
Given a smooth closed orientable surface $M$, a metric $g$ on $M$ and a smooth closed Riemannian manifold $N=(N,G)$, the \emph{energy} of a map $u: M \to N$ is defined as
\begin{equation}
E(u,g) = \frac{1}{2} \int_M |du|^2_g dv_g.
\end{equation}
We call $u$ a harmonic map if it is a critical point of $E$ (viewed as a functional on maps). The \emph{harmonic map flow} is then given by
\begin{equation}
\label{eq:hmf}
\frac{ \partial u}{\partial t} = \tau_g(u),
\end{equation}
where $\tau_g(u) = \tr \nabla du$ is the tension field of the map $u$. One can view this flow as a gradient flow (with respect to the map) for the energy functional $E$.
The harmonic map flow has been studied extensively, see e.g. \cite{CDY,ES, hartman,parker, struweCMH}. As expected by its gradient flow nature, it aims to transform an initial map into a harmonic map. In general, this is not possible without singularities in the map forming, as some homotopy classes do not contain \emph{any} harmonic maps (\cite{EL_2R}). However, assuming nonpositive sectional curvature on $N$, Eells and Sampson were able to show long-time existence and uniform derivative bounds for this flow in \cite{EL} and as a result could show that any homotopy class of maps contains a harmonic representative.

If one also allows the domain metric to vary, a different gradient flow for the functional $E$ can be found. The energy is invariant under conformal changes of the domain metric in two dimensions, so depending on the genus of $M$ one can restrict to flowing through metrics of Gauss curvature $K \in \{-1,0,1\}$, corresponding to hyperbolic surfaces, tori and the sphere respectively. This gradient flow was introduced in \cite{RT}, and is called the \emph{Teichm\"uller harmonic map flow}. It is given by
\begin{align}
\begin{split}
\label{eq:flow}
\pddt u &= \tau_g (u) \\
\pddt g &= \frac{\eta^2}{4} Re(P_g(\Phi(u,g)))
\end{split}
\end{align}
where $\tau_g$ again denotes the tension field of $u$, the coupling constant $\eta$ a choice of scaling in defining a metric on pairs of maps and metrics, $P_g$ the $L^2$-orthogonal projection of quadratic differentials onto holomorphic quadratic differentials and $\Phi(u,g)$ is the Hopf differential of $u$, a quadratic differential that measures how `close' $u$ is to being conformal. In particular vanishing of $\Phi(u,g)$ implies that $u$ is a weakly conformal map. This flow thus tries to transform given initial data into a weakly conformal harmonic map.

A result of Gulliver, Osserman and Royden (\cite{GOR}) allows us to give a more geometric characterization of such maps as branched minimal immersions. 

In this sense one can view the study of the flow \eqref{eq:flow} as an alternative approach to the classical variational method for studying branched minimal immersions (as done in \cite{SU}), and indeed in \cite{RT} some of the results in \cite{SU} are recovered using the flow method.

A priori singularities in this flow might appear both in the metric (i.e. parts of the domain may become arbitrarily `thin') and the map. The latter type is well-understood for the harmonic map flow, the general idea is that so-called bubbles can be extracted at points and times where energy concentrates (\cite{struweCMH}). These bubbles are non-constant harmonic maps $S^2 \to N$. Away from such concentration points, the harmonic map flow enjoys higher regularity. This principle carries over to the Teichm\"uller harmonic map flow (\cite{rupflin_existence}), and allows one to employ a lot of the techniques familiar from the study of the harmonic map flow in the development of the theory for the flow \eqref{eq:flow}.

Possible degeneration of the metric on the other hand requires new ideas. When this degeneration only happens at infinite time and $M$ is a hyperbolic surface, it was shown in \cite{RTZ} that the initial map in some sense (sub-)converges to a collection of branched minimal immersions (or constant maps) in the limit. In a joint work with Rupflin, Topping and the author (\cite{HRT}) the properties of this convergence where studied more closely.

When one constructs the flow \eqref{eq:flow} as a formal gradient flow on a space of maps and hyperbolic metrics (see \cite{RT} for the details of this construction), a choice of inner product is made. This manifests itself in the coupling constant $\eta$ in the definition of the flow \eqref{eq:flow}. In this paper we study the behaviour of the flow when one lets $\eta \downarrow 0$. 

We establish two results in this direction. Initially, we fix some initial data $(u_0,g_0)$ and a time interval $[0,T]$. Assuming that the corresponding classical harmonic map flow starting from the same initial data does not develop singularities, we prove the following theorem, establishing uniform convergence of the Teichm\"uller harmonic map flow to the harmonic map flow:
\begin{thm}
	\label{thm:eta0}
	Let $M$ be a smooth closed oriented surface of genus $\gamma \geq 2$ and $g_0 \in {\cal{M}}_{-1}$, the space of constant Gauss curvature $-1$ hyperbolic metrics on $M$. Further take $(N,G)$ to be a  smooth closed Riemannian manifold and $u_0:M \to N$ a smooth map.  Consider a fixed time interval $[0,T]$ such that the harmonic map flow $u(t)$ satisfying \eqref{eq:hmf} (with respect to $g_0$), starting at the  initial condition $u(0) = u_0$, is smooth. Then the flows $( u_\eta(t), g_\eta(t))$ satisfying \eqref{eq:flow} with initial condition $(u_0,g_0)$ and some coupling constant $\eta > 0$ converge smoothly to the harmonic map flow $u(t)$, in the following sense as $\eta \downarrow 0$:
	\begin{enumerate}
		\item \label{test} The metrics $g_\eta(t)$ converge to the initial metric $g_0$ in $C^k(M,g_0)$ uniformly in $t$ for each $k \in \mathbb{N}$.
		\item The maps $u_\eta(t)$ converge to $u(t)$ smoothly on $M \times [0,T]$.
	\end{enumerate}
\end{thm}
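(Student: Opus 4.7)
The plan is to exploit the smallness of the metric evolution, which carries the explicit factor $\eta^2$ in \eqref{eq:flow}, to reduce the problem to a small perturbation of the classical harmonic map flow with fixed metric $g_0$. Since the reference flow $u(t)$ is assumed smooth on $[0,T]$, the desired convergence should follow from standard parabolic perturbation theory, once one controls the evolving metric and establishes uniform regularity for $u_\eta$.

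For part 1, I would use the gradient flow structure, which yields energy monotonicity, so $E(u_\eta(t), g_\eta(t)) \leq E(u_0, g_0)$ for all $t \geq 0$. Since $|\Phi(u_\eta, g_\eta)|_{g_\eta}$ is controlled pointwise by $|du_\eta|^2_{g_\eta}$, the Hopf differential is bounded in $L^1(M, g_\eta)$ by the initial energy. Crucially, $P_g$ projects onto the $(6\gamma-6)$-dimensional space of holomorphic quadratic differentials, which varies smoothly with $g$, and on this finite-dimensional space all $C^k$ norms are equivalent to $L^1$ uniformly in $g$ as long as $g$ stays in a compact neighbourhood of $g_0$ in $\M$. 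This yields $\|\partial_t g_\eta\|_{C^k(M, g_0)} \leq C_k \eta^2$ as long as $g_\eta$ remains close to $g_0$. A continuity argument then shows, for $\eta$ small enough, $g_\eta(t)$ stays within any preassigned $C^k$-neighbourhood of $g_0$ throughout $[0,T]$, and integration in $t$ gives $\|g_\eta(t) - g_0\|_{C^k(M, g_0)} \leq C_k \eta^2 T \to 0$ uniformly in $t$.

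For part 2, I would embed $N \hookrightarrow \mathbb{R}^L$ isometrically, so that both $u$ and $u_\eta$ satisfy semilinear parabolic systems of the form $\partial_t w = \Delta_g w + A(w)(dw, dw)_g$, with $A$ the second fundamental form of $N$. The $\mathbb{R}^L$-valued difference $v = u_\eta - u$ vanishes at $t=0$ and satisfies
\begin{equation*}
\partial_t v = \Delta_{g_0} v + L_u(v, \nabla v) + F_\eta,
\end{equation*}
where $L_u$ is a first-order nonlinear differential operator whose coefficients are controlled by the (fixed, smooth) reference map $u$, and $F_\eta$ is a forcing term arising from the discrepancy between $g_\eta$ and $g_0$, satisfying $\|F_\eta\|_{C^k} = O(\eta^2)$ by part 1, provided one has $C^{k+2}$ bounds on $u_\eta$.

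The main obstacle is obtaining such uniform-in-$\eta$ $C^k$ bounds on $u_\eta$ over the full interval $[0,T]$. I would close this by a bootstrap/continuity argument: on any subinterval on which $u_\eta$ remains $C^1$-close to $u$, the $u_\eta$-equation is a uniformly parabolic perturbation of the equation for $u$, and iteration of parabolic Schauder estimates yields uniform higher-order bounds on $u_\eta$. These in turn control $F_\eta$, and a Gronwall-type estimate applied to the equation for $v$ gives $\|v\|_{C^1} = O(\eta^2)$, which is well below the threshold needed to maintain the bootstrap for $\eta$ sufficiently small. Extending through the whole of $[0,T]$ and then iterating on the regularity index $k$ produces the claimed smooth convergence.
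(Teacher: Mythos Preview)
Your argument for part 1 is essentially the paper's: the $L^1$ bound on the Hopf differential together with the finite-dimensionality of the range of $P_g$ (the paper quotes this as Lemma~2.6 from \cite{rupflin_existence}) gives $\|\partial_t g_\eta\|_{C^k}\le C\eta^2$ on a $C^k$-neighbourhood of $g_0$, and a continuity argument plus integration yields the claim.

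For part 2 your route is genuinely different from the paper's. You run a $C^1$-level bootstrap: on the set where $u_\eta$ stays $C^1$-close to the smooth reference $u$, a pointwise bound on $|du_\eta|$ feeds into $L^p$/Schauder theory to produce uniform higher-order bounds on $u_\eta$, which in turn make the forcing $F_\eta$ in the $v$-equation $O(\eta^2)$ in $C^\alpha$; linear parabolic estimates (with the quadratic remainder absorbed via the smallness hypothesis) then give $\|v\|_{C^{1,\alpha}}=O(\eta^2)$, strictly improving the bootstrap and forcing it to persist to $T$. This is correct, though two points deserve care that your sketch glosses over: you need $L^p$ theory before Schauder (a mere $C^1$ bound on $u_\eta$ does not give H\"older forcing directly), and you must check that no singular time of the weak solution $u_\eta$ can occur inside the bootstrap interval (the $C^1$ bound precludes bubbling, so this is fine).

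The paper instead works at the $L^2$ level. It proves an $L^2$ Gronwall inequality for $w=u-u_\eta$ (Proposition~2.10) under the quantitative hypothesis $\int_0^T\!\int_M|\nabla u_\eta|^4\le C$, and obtains that $W^{1,4}$ bound not from a $C^1$ bootstrap but from the no-bubbling machinery: a local energy evolution estimate (Lemma~2.13, after Struwe) controls energy concentration for a short time $\delta_0$ depending only on the data, small-energy regularity then yields the $W^{1,4}$ bound (Lemmas~2.11--2.12), and interpolation against uniform parabolic $C^k$ bounds upgrades the $L^2$ convergence to $C^k$. The iteration to reach $T$ is then done in fixed steps of size $\tilde\delta_0$, using $W^{1,2}$-closeness of $u_\eta$ to the smooth $u$ to re-seed the energy-concentration bound at each step. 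What the paper's approach buys is that it stays within the established regularity framework for the flow (weak solutions plus $\varepsilon$-regularity) and never invokes a $C^1$ hypothesis on $u_\eta$ that one has to justify separately; what your approach buys is directness---you obtain $C^k$ convergence without the detour through $L^2$ and interpolation---at the cost of a slightly more delicate continuity argument.
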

The proof will be carried out in Section \ref{sec:etasmall}.

We can also adopt a different viewpoint on the relative behaviour of the map evolution and the metric evolution for small $\eta$. By rescaling  time appropriately, we can think of the map evolution happening  increasingly quickly, while we fix the `speed' of the metric evolution. In particular the rescaling $t \to \frac{4}{\eta^2}t$ yields

\begin{align}
\begin{split}
\label{eq:flow2}
\pddt u &= \kappa \tau_g (u) \\
\pddt g &= Re(P_g(\Phi(u,g))),
\end{split}
\end{align}
where $\kappa := \frac{4}{\eta^2}$.
Studying $\eta \downarrow 0$ now corresponds to $\kappa \to \infty$. In the limit, we heuristically expect the map to become instantaneously harmonic at all times, while the metric evolves in the direction of the real part of the Hopf differential (as $Re(P_g(\Phi(u,g))) = Re(\Phi(u,g))$ for $u$ harmonic). 

Consider a target $(N,G)$ with strictly non-positive curvature and fix a homotopy class of maps which does not contain any constant maps or maps to closed geodesics. Work by Hartman (\cite{hartman}) then guarantees the existence of a unique harmonic map in that homotopy class for any choice of metric on the domain $M$. Thus given a curve of metrics $g(t):[0,T] \to \mathcal{M}_{-1}$, we can find a corresponding curve of harmonic maps $u(t): M \times [0,T] \to N$.
In this setting we prove that the flows \eqref{eq:flow2} do indeed converge to a flow through harmonic maps.

\begin{thm}
	\label{thm:rescaled}
	Let $M$ be a smooth closed oriented surface of genus $\gamma \geq 2$ and $(N,G)$ be a smooth closed Riemannian manifold.
	Given smooth initial data $(u_0,g_0)$ for  \eqref{eq:flow2}, take $0 < T \leq T_0$ with $T_0$ from Lemma \ref{lem:injectivity}, and consider the sequence $(u_\kappa(t),g_\kappa(t))_{\kappa=1}^{\infty}$ of solutions to \eqref{eq:flow2} with rescaled coupling constant $\kappa$ on the fixed time interval $[0,T]$, which we further assume to be smooth up to $t=T$. Then the following is true:
	
	\begin{enumerate}
		\item
		\label{claim:1}
		There exists a limit curve of hyperbolic metrics $g$ (i.e. each $g(t)$ has Gauss curvature $K=-1$) on $[0,T]$, continuous in time and smooth in space in the sense that for all $k \in \mathbb{N}$, $g$ is an element of $C^{0} ([0,T], C^k(\Sym^2(T^*M),g_0))$. 
		After possibly selecting a subsequence in $\kappa$,
		The curves $g_\kappa$ converge to $g$ in $C^{0} ([0,T], C^k(\Sym^2(T^*M),g_0))$ (i.e. uniformly in time in $C^k(M,g_0)$), again for all $k \in \mathbb{N}$. 
		\item 	
		\label{claim:2}
		Further assume that $N$ has strictly negative sectional curvature and that the homotopy class of $u_0$ does not contain maps to closed geodesics in the target or constant maps. Let $u(t): M \times (0,T] \to N$ be the unique curve of harmonic maps homotopic to $u_0$ corresponding to $g(t)$, then the limit curve of metrics $g$ is differentiable in time at each point $x \in M$ away from $t=0$, with derivative given by  $\ddt g(t)(x) = Re( \Phi(u,g))(x)$, where $\Phi(u,g)$ as usual denotes the Hopf differential. Finally, the maps $u_\kappa(t)$ also converge to $u(t)$ uniformly in $t$ in $C^k(M,g_0)$ away from 0 for all $k \in \mathbb{N}$, and the convergence of $(u_\kappa, g_\kappa)$ to $(u(t),g(t))$ does not require a choice of subsequence in $\kappa$.
	\end{enumerate}
\end{thm}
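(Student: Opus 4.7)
\textbf{Proof plan for Theorem \ref{thm:rescaled}.}

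\emph{Compactness for $g_\kappa$.} Since \eqref{eq:flow2} is just \eqref{eq:flow} after rescaling time by $4/\eta^2$, the gradient-flow structure yields an energy identity of the shape
\[
E(u_\kappa(t),g_\kappa(t)) + \kappa\!\int_0^t\!\|\tau_{g_\kappa}(u_\kappa)\|_{L^2(g_\kappa)}^2\,ds + c\!\int_0^t\!\|Re(P_{g_\kappa}\Phi(u_\kappa,g_\kappa))\|_{L^2}^2\,ds = E(u_0,g_0)
\]
with $c>0$ independent of $\kappa$. In particular $E(u_\kappa(t),g_\kappa(t))\le E(u_0,g_0)$ uniformly, and hence $\|\Phi(u_\kappa,g_\kappa)\|_{L^1}\le 2E_0$. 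By Lemma \ref{lem:injectivity}, the metrics $g_\kappa(t)$ stay in a compact region of moduli space for $t\in[0,T]$, so the finite-dimensional $L^2$-projection onto holomorphic quadratic differentials maps $L^1$ boundedly into every $C^k(M,g_0)$. This gives uniform pointwise-in-time $C^k$ bounds on $\pddt g_\kappa=Re(P_{g_\kappa}\Phi(u_\kappa,g_\kappa))$, hence equi-Lipschitz continuity of $t\mapsto g_\kappa(t)$ in each $C^k$ topology. Arzel\`a--Ascoli then produces a subsequential limit $g\in C^0([0,T],C^k(M,g_0))$, which stays in $\mathcal{M}_{-1}$ because the evolution is tangent to it at each $\kappa$.

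\emph{Equilibration of the maps.} The same identity gives $\int_0^T\|\tau_{g_\kappa}(u_\kappa)\|_{L^2}^2\,ds\le E_0/\kappa\to 0$, so the $u_\kappa$ are ``almost harmonic'' on average. Under the hypotheses of part \ref{claim:2}, Hartman provides a unique harmonic $\bar u_\kappa(t)$ in the homotopy class of $u_0$ for each metric $g_\kappa(t)$, and strict negativity of the curvature makes $E(\cdot,g_\kappa(t))$ strictly convex along the geodesic homotopy to $\bar u_\kappa(t)$. This should yield a coercivity estimate of the form $\mathrm{dist}^2(u,\bar u_\kappa(t))\le C\|\tau_{g_\kappa(t)}(u)\|_{L^2}^2$ for $u$ near $\bar u_\kappa(t)$, uniform over the compact moduli neighbourhood. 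Feeding this into $\pddt u_\kappa=\kappa\tau_{g_\kappa}(u_\kappa)$ should produce an exponential relaxation bound of the form $\mathrm{dist}(u_\kappa(t),\bar u_\kappa(t))\le Ce^{-c\kappa t}$, which vanishes uniformly on $[\varepsilon,T]$ for each $\varepsilon>0$; this accounts for the ``away from $0$'' qualifier in the statement. Parabolic bootstrapping upgrades the convergence to $C^k$, and continuous dependence of Hartman's harmonic map on the metric then identifies $\lim_\kappa \bar u_\kappa(t)$ with the harmonic map $u(t)$ for $g(t)$.

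\emph{Passing to the limit and removing the subsequence.} With smooth convergence $(u_\kappa,g_\kappa)\to(u,g)$ on $M\times[\varepsilon,T]$ in hand, I integrate the metric equation, $g_\kappa(t_2)-g_\kappa(t_1)=\int_{t_1}^{t_2}Re(P_{g_\kappa}\Phi(u_\kappa,g_\kappa))\,ds$, and take $\kappa\to\infty$. Since $u(t)$ is harmonic for $g(t)$ its Hopf differential is holomorphic, so $P_{g(t)}\Phi(u,g)=\Phi(u,g)$, and the limiting identity $\ddt g(t)=Re(\Phi(u(t),g(t)))$ emerges on $(0,T]$. Uniqueness of Hartman's harmonic representative also forces the limit curve $g$ to be unique, eliminating the need for a subsequence. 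I expect the main obstacle to be the uniform coercivity / exponential-relaxation estimate in the second step: the convexity constants coming from negative curvature are sensitive to the geometry of $(N,G)$ and of the varying $g_\kappa$, and one needs an estimate good enough to survive $\kappa\to\infty$ without losing control near $t=0$, where the initial map need not be anywhere near harmonic.
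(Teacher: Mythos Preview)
Your compactness argument for Claim~\ref{claim:1} is essentially the paper's, and your integrated-equation argument for passing $\pddt g_\kappa$ to the limit is also close to what the paper does (the paper additionally uses a Poincar\'e estimate for quadratic differentials to show $P_{g_\kappa}\Psi_\kappa-\Psi_\kappa\to 0$ in $L^1$, then interpolates).

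The second step is where you diverge, and the coercivity/exponential-relaxation route you propose is not how the paper proceeds. You correctly flag it as the main obstacle, and it is a real one: the inequality $\mathrm{dist}^2(u,\bar u)\le C\|\tau_g(u)\|_{L^2}^2$ is a \L ojasiewicz-type statement that is not available off the shelf, and even granting it, turning the \emph{average} bound $\int_0^T\|\tau\|^2\le E_0/\kappa$ into a \emph{pointwise} one via an ODE on $\mathrm{dist}(u_\kappa,\bar u_\kappa)$ is delicate because $\bar u_\kappa(t)$ is itself moving with $g_\kappa(t)$. The paper avoids this entirely. It first computes the evolution of $\Tau(t)=\|\tau_{g}(u)\|_{L^2}^2$ along \eqref{eq:flow} and shows $\ddt\Tau\le C\eta^4$ under nonpositive curvature (Lemma~\ref{lem:tauderiv}); combining this almost-monotonicity with the $L^1$ bound $\int\Tau\le E_0$ yields the pointwise estimate $\Tau(\bar t)\le C(\eps)\kappa^{-1}$ for $\bar t\ge\eps$ directly (Corollary~\ref{lem:odetau}). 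The small tension is then fed into a bubbling argument (Lemma~\ref{lem:energytension}, Corollary~\ref{lem:energyconctension}) to control energy concentration, which is the missing ingredient before any parabolic bootstrap can start; only then do $W^{2,2}$ and higher $C^k$ bounds follow (Proposition~\ref{thm:Ckbounds}). Closeness to $\bar u_\kappa(t)$ is obtained not by coercivity but by a compactness/contradiction argument using Mumford compactness and the absence of bubbles (Lemma~\ref{lem:C0}), upgraded to $C^k$ by interpolation.

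There is also a genuine gap in your final sentence: uniqueness of Hartman's harmonic representative for each fixed metric does \emph{not} by itself force the limit curve $g$ to be unique. What is needed is uniqueness for the ODE $\pddt g=Re(\Phi(\mathcal H(g),g))$, and this requires a separate argument. The paper supplies one (Proposition~\ref{prop:uniq}) by combining the Lipschitz dependence of $\mathcal H$ on $g$ from Eells--Lemaire with a Gronwall estimate on $\|g_1(t)-g_2(t)\|_{C^{2,\alpha}}$. Without this, you cannot drop the subsequence.
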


We give the proof and the definitions of the involved spaces in Section \ref{sec:rescaled}. 

\begin{rmk}
	
A consequence of Theorem \ref{thm:rescaled} is that the flow through harmonic maps

\begin{align}
\begin{split}
\label{eq:flow3}
\tau_g(u) &= 0 \\
\pddt g &= Re(\Phi(u,g)),
\end{split}
\end{align}

enjoys short-time existence when one works in the above setting. On the way to proving Theorem \ref{thm:rescaled} we in fact also show a uniqueness statement. It would be interesting to analyse this flow further, in particular investigating whether the resulting curves in Teichm\"uller space are geodesics (with respect to the Weil-Petersson metric) and if finite-time singularities in the metric can occur (see e.g. \cite{tromba}).
\end{rmk}

\emph{Acknowledgements:} Large parts of the work in this paper were originally carried out during my thesis, and I would like to thank my advisor Peter Topping for his guidance and encouragement. I also enjoyed the support of the Pacific Institute for the Mathematical Sciences (PIMS) during the later stages of this project.

\section{Small Coupling Constant Limit}
\label{sec:etasmall}
In this section we prove Theorem \ref{thm:eta0}. To this end, we first establish the metric convergence using estimates from \cite{rupflin_existence}. We then prove that the map part converges, which is accomplished through showing that the energy concentration along the flow \eqref{eq:flow} stays controlled if the corresponding harmonic map flow is smooth.

\subsection{Convergence of the Metric}	

We collect some results on the behaviour of the metric $g(t)$ under the flow \eqref{eq:flow} here. 

\begin{defn}
	Let $g(t)$ be a smooth one-parameter family of metrics for $t \in [0,T]$.
	Define the $L^2$-length of $g(t)$ on $[0,T]$ by
	\begin{equation*}
	L(g, [0,T]) = \int_{0}^{T} \norm{ \pt g }_{L^2(M,g(t))} dt.
	\end{equation*}
\end{defn}

Let $(u(t),g(t))$  be evolving under $\eqref{eq:flow}$. Recall the energy identity
\begin{equation}
\label{eq:energy-identity}
\frac{dE(u,g)}{dt}=-\int_M 
\left[|\tau_g(u)|^2+\left(\frac{\eta}{4}\right)^2 |Re(P_g(\Phi(u,g)))|^2\right]dv_g.
\end{equation}
from \cite{RT}. As a consequence we have an estimate for the $L^2$-length.

\begin{lemma}
	Let $T>0$, $\eta>0$ and assume that $(u_\eta,g_\eta(t))$ is a solution to $\eqref{eq:flow}$ with coupling constant $\eta$ on $[0,T]$ for some initial data $(u_0,g_0)$. Then we have the estimate
	\label{lem:l2small}
	\begin{equation}
	\label{eq:L2small}
		L(g_\eta, [0,T]) \leq \eta \sqrt{ T E(u_0,g_0)}.
	\end{equation}
\end{lemma}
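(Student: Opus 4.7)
The plan is a short computation combining the evolution equation for $g$ with the energy identity \eqref{eq:energy-identity} via Cauchy--Schwarz in time, so I expect no real obstacle; the only thing to be careful about is tracking the $\eta$ powers.

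First, I would use the metric equation directly: since $\partial_t g_\eta = \frac{\eta^2}{4}\,Re(P_{g_\eta}(\Phi(u_\eta,g_\eta)))$, taking the $L^2(M,g_\eta)$ norm pointwise in $t$ gives
\begin{equation*}
\norm{\pt g_\eta(t)}_{L^2(M,g_\eta(t))} = \frac{\eta^2}{4}\norm{Re(P_{g_\eta}(\Phi(u_\eta,g_\eta)))}_{L^2(M,g_\eta(t))}.
\end{equation*}
So $L(g_\eta,[0,T])$ is just $\frac{\eta^2}{4}$ times the $L^1_t$-norm of $t\mapsto \norm{Re(P_{g_\eta}(\Phi))}_{L^2}$.

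Next, I would integrate the energy identity \eqref{eq:energy-identity} from $0$ to $T$ and drop the nonnegative tension term, giving
\begin{equation*}
\left(\frac{\eta}{4}\right)^2 \int_0^T \norm{Re(P_{g_\eta}(\Phi))}_{L^2(M,g_\eta(t))}^2 dt \leq E(u_0,g_0) - E(u_\eta(T),g_\eta(T)) \leq E(u_0,g_0),
\end{equation*}
which is the clean $L^2_t L^2_x$ estimate I need.

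Finally, I would apply Cauchy--Schwarz in time:
\begin{equation*}
L(g_\eta,[0,T]) = \frac{\eta^2}{4}\int_0^T \norm{Re(P_{g_\eta}(\Phi))}_{L^2} dt \leq \frac{\eta^2}{4}\sqrt{T}\left(\int_0^T \norm{Re(P_{g_\eta}(\Phi))}_{L^2}^2 dt\right)^{1/2}.
\end{equation*}
Plugging in the bound from the previous step replaces the bracketed term by $\frac{16}{\eta^2}E(u_0,g_0)$, and the $\eta$-factors collapse to yield $L(g_\eta,[0,T]) \leq \eta\sqrt{T\,E(u_0,g_0)}$, as required. The argument is formal in the sense that it only uses \eqref{eq:flow} and \eqref{eq:energy-identity}, both of which are given.
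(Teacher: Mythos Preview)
Your proof is correct and follows essentially the same approach as the paper: integrate the energy identity to bound $\int_0^T \norm{\partial_t g_\eta}_{L^2}^2\,dt$ by $\eta^2 E_0$, then apply Cauchy--Schwarz (the paper says H\"older) in time. The only cosmetic difference is that you expand $\partial_t g_\eta$ via the flow equation before invoking the energy identity, whereas the paper goes directly to the bound on $\norm{\partial_t g_\eta}_{L^2}^2$.
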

\begin{proof}
	Integrating the energy identity \eqref{eq:energy-identity} in time from $0$ to $T$ yields
	\begin{equation*}
	\int_{0}^{T} \int_M 	\left(\frac{\eta}{4}\right)^2 |Re(P_{g_\eta}(\Phi(u_\eta,g_\eta)))|^2 dv_{g_\eta} dt \leq E_0.
	\end{equation*}
	for $E_0 = E(u_0,g_0)$ denoting the initial energy. Hence
	\begin{equation*}
	\int_{0}^{T} \norm{ \pt g_\eta  }^2_{L^2(M,g_\eta(t))} dt \leq \eta^2 E_0,
	\end{equation*} and H\"older's inequality yields the claim.
\end{proof}

\begin{rmk}
	Note that after projecting a curve of metrics $g(t)$ down to a path $[g(t)]$ in Teichm\"uller space the $L^2$-length of $g(t)$ defined in the above lemma corresponds to the length of $[g(t)]$, computed with respect to the classical Weil-Petersson metric (up to a constant) .
\end{rmk}

We call curves of metrics evolving in the direction of the real part of a holomorphic quadratic differential \emph{horizontal curves}: 

\begin{defn}[See also \cite{RThorizontal}]
	\label{def:horizontal}
	In our setting a \emph{horizontal curve} is a smooth one-parameter family of hyperbolic metrics $g(t)$ on $M$ for $t \in [t_1,t_2]$, such that for all such $t$ we have $\ddt g(t) =  Re( \Psi(t))$, where $\Psi(t)$ is some holomorphic quadratic differential on $(M,g(t))$.
\end{defn}

In particular solutions $g_\eta(t)$ to \eqref{eq:flow} are horizontal curves.
For such horizontal curves we state an estimate from \cite{rupflin_existence} that applies when their $L^2$-length is small.

\begin{prop}[Proposition 2.2 in \cite{rupflin_existence}]
	\label{prop:metricbounds}
	For every $\epsilon > 0$ and every $s > 3$ there exists a number $\theta = \theta(\epsilon, s) > 0$ such that the following holds true. Let $g_0 \in \mathcal{M}^s_{-1}$, i.e. a hyperbolic metric of class $H^s$, for which the length $\ell(g_0)$ of the shortest closed geodesic in $(M, g_0)$ is no less than $\epsilon$. Then there is a number $C = C(g_0, s) < \infty$ such that for any horizontal curve $g(t)$ with $g(0) = g_0$ and $L(g,[0,T]) \leq \theta$
	we have
	\begin{equation} 
	\label{eq:hsl2}
	\norm{ \ddt g(t) }_{H^s} \leq C \norm{ \ddt g(t) }_{L^2(M,g(t))} \, \text{for every} \,\, t \in [0,T).
	\end{equation}
\end{prop}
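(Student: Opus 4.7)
The plan is to exploit the finite-dimensionality of the space of holomorphic quadratic differentials, together with a compactness argument over the thick part of moduli space. Since $g(t)$ is horizontal, at each $t$ we can write $\ddt g(t) = Re(\Psi(t))$ for a holomorphic quadratic differential $\Psi(t)$ on $(M, g(t))$, and the space of such differentials is finite-dimensional (of real dimension $6\gamma - 6$). Hence on any single fibre all norms are equivalent, giving an estimate $\norm{\Psi(t)}_{H^s} \leq C(g(t), s) \norm{\Psi(t)}_{L^2(M,g(t))}$; the real work is to promote the constant $C$ to one uniform in $t$.

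The uniformity comes from Mumford's compactness theorem: the subset of moduli space on which the shortest closed geodesic has length $\geq \epsilon/2$ is compact, and since the relevant norm equivalence is diffeomorphism-invariant, the constant above is bounded there by some $C_0(\epsilon, s)$. To invoke this we need $g(t)$ to project into this thick part of moduli space throughout $[0,T]$, which is where the small $L^2$-length hypothesis enters. The $L^2$-length of a curve of hyperbolic metrics descends to the Weil-Petersson length of the induced path in Teichm\"uller space (as in the remark after Lemma \ref{lem:l2small}), and Wolpert-type estimates relate Weil-Petersson displacement to changes in lengths of simple closed geodesics: small WP-displacement prevents the length of any such geodesic from dropping by too much. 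For $\theta$ chosen small enough in terms of $\epsilon$, this forces $\ell(g(t)) \geq \epsilon/2$ throughout $[0,T]$, and combining the two ingredients gives the desired estimate. The dependence of the final constant on $g_0$, rather than purely on $\epsilon$, enters through the step of comparing $H^s$ norms measured with respect to $g(t)$ versus $g_0$ (harmless because $g(t)$ remains close to $g_0$ for small $\theta$) and through a trivialisation of the bundle of holomorphic quadratic differentials over a neighbourhood of $[g_0]$ in Teichm\"uller space.

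The main obstacle is the injectivity radius control along the curve, which would be handled by a continuity/bootstrap argument: let $T^*$ be the supremum of times $t \in [0,T]$ on which $\ell(g(\cdot)) \geq \epsilon/2$ holds up to $t$. On $[0, T^*)$ all the uniform constants above are available, so the Wolpert-type estimate, once integrated against the length bound $L(g, [0, T^*]) \leq \theta$, forces $\ell(g(T^*)) > \epsilon/2$ strictly. Continuity in $t$ then rules out $T^* < T$, closing the bootstrap. The delicate point is that the constants in Wolpert's formulas degenerate as the injectivity radius tends to zero, so the bootstrap must be closed with strict inequality; this is exactly why $\theta$ is forced to depend on $\epsilon$ rather than being an absolute constant.
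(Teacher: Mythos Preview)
This proposition is not proved in the paper at all: it is quoted verbatim from \cite{rupflin_existence} (as Proposition~2.2 there) and used as a black box. So there is no proof in the present paper to compare your proposal against.

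That said, a brief comment on your sketch. The core idea---finite-dimensionality of the space of holomorphic quadratic differentials giving a norm equivalence $\norm{\cdot}_{H^s}\leq C(g)\norm{\cdot}_{L^2}$ on each fibre, combined with some uniformity argument in $g$---is exactly right. But the Mumford compactness route you propose does not quite fit the statement: the paper explicitly says the $H^s$-norm is taken with respect to a \emph{fixed} set of coordinate charts on $M$, so it is not diffeomorphism-invariant, and Mumford compactness only controls metrics up to pullback by diffeomorphisms. This is also why the constant in the statement is $C(g_0,s)$ rather than $C(\epsilon,s)$; a truly invariant argument would give the latter. The cleaner route, and the one more in line with how the constant is stated, is to argue locally: the map $g\mapsto P_g$ (or equivalently a local frame of holomorphic quadratic differentials) varies continuously in $g\in\mathcal{M}_{-1}^s$, so the norm-equivalence constant is locally bounded near $g_0$; then use the small $L^2$-length hypothesis together with the bootstrap you describe to keep $g(t)$ in an $H^s$-neighbourhood of $g_0$ on which that bound holds. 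Your injectivity-radius bootstrap is fine and is indeed needed to keep the estimates from degenerating, but the Mumford step can be dropped in favour of this direct local continuity argument.
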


The $H^s$-norm here and in the following is to be understood with respect to some fixed set of local coordinate charts on $M$, thus in particular $\mathcal{M}^s_{-1}$ refers to the set of hyperbolic metrics with coefficients in $H^s(M)$.

We also have the following result from \cite{rupflin_existence} controlling the projection $P_g$.

\begin{lemma}[Lemma 2.9 in \cite{rupflin_existence}]
	\label{lem:uniformPg}
	For any $g_0 \in \mathcal{M}_{-1}$ and any $s > 3$ there exists a neighbourhood $W$ of $g_0$ in $\mathcal{M}^s_{-1}$ and a constant $C=C(g_0,s)$ such that for all $g \in W$ and $h \in \Gamma(\Sym^2(T^*M))$ we have
	\begin{equation}
	\norm{P_g{h}}_{H^s} \leq C\norm{h}_{L^1(M,g_0)}.
	\end{equation}
\end{lemma}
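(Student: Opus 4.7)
The plan is to exploit the finite-dimensionality of the space of holomorphic quadratic differentials. For any hyperbolic metric $g$ on $M$ (genus $\gamma \geq 2$), the space $\mathcal{H}(g)$ of holomorphic quadratic differentials has real dimension $N = 6\gamma - 6$. On a finite-dimensional vector space all norms are equivalent, so the strategy reduces to (i) writing the projection in coordinates, and (ii) verifying that a suitable basis of $\mathcal{H}(g)$ can be chosen to depend smoothly on $g$ in a neighbourhood of $g_0$, with uniform $H^s$ and $L^\infty$ bounds.

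First I would fix an $L^2(g_0)$-orthonormal basis $\{\Psi_1^0,\ldots,\Psi_N^0\}$ of $\mathcal{H}(g_0)$. Holomorphic quadratic differentials satisfy a first-order Cauchy--Riemann type elliptic equation (with coefficients depending smoothly on the metric), so standard elliptic regularity upgrades any $L^2$ control to $C^k$ control on a fixed closed surface, and bootstrapping yields $\|\Psi\|_{H^{s+1}} \leq C \|\Psi\|_{L^2(g)}$ uniformly for $g$ in a small $\mathcal{M}^s_{-1}$-neighbourhood $W$ of $g_0$ (where the operator coefficients are close to those of $g_0$). By the implicit function theorem applied to the $\bar\partial$-operator on quadratic differentials, I can then produce a continuous family of bases $\{\Psi_i(g)\}_{i=1}^N$ of $\mathcal{H}(g)$ for $g \in W$, which after Gram--Schmidt in $L^2(g)$ can be taken $L^2(g)$-orthonormal; these depend continuously on $g \in W$ in any $H^{s'}$-norm.

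Next I would write
\begin{equation*}
P_g h = \sum_{i=1}^N \langle h, \Psi_i(g)\rangle_{L^2(g)} \Psi_i(g),
\end{equation*}
and estimate each coefficient crudely via
\begin{equation*}
|\langle h, \Psi_i(g)\rangle_{L^2(g)}| \leq \|h\|_{L^1(g)}\,\|\Psi_i(g)\|_{L^\infty(g)} \leq C(g_0)\,\|h\|_{L^1(M,g_0)}\,\|\Psi_i(g)\|_{H^s},
\end{equation*}
using $s > 3 > \dim M/2$ so that $H^s \hookrightarrow L^\infty$, and the fact that $g$ and $g_0$ are uniformly equivalent as bilinear forms on $W$ (so the volume forms are comparable). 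Combining with the uniform $H^s$-bound on $\Psi_i(g)$ obtained above yields $\|P_g h\|_{H^s} \leq C \|h\|_{L^1(M,g_0)}$ with $C = C(g_0,s)$.

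The main obstacle, and the only substantive point, is constructing the continuous $H^s$-family of orthonormal bases $\{\Psi_i(g)\}$ with uniform $H^s$ control when the background metric $g$ is only of class $H^s$ rather than smooth. This forces one to work directly with the $\bar\partial$-operator whose coefficients lie in $H^{s-1}$ (so that multiplication is well behaved via the algebra property of $H^s$ for $s>3 \geq \dim M/2$), and to invoke an $H^s$-version of elliptic regularity for the Cauchy--Riemann system on quadratic differentials. Everything after that is either finite-dimensional linear algebra or a continuity argument on the parameter $g$.
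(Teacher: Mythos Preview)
The paper does not prove this lemma; it is quoted verbatim from \cite{rupflin_existence} (Lemma~2.9 there) and used as a black box. Your argument is correct and is precisely the natural one: exploit the finite-dimensionality of $\mathcal{H}(g)$, expand $P_g h$ in an $L^2(g)$-orthonormal basis, bound each coefficient by H\"older, and use elliptic regularity for $\bar\partial$ together with continuous dependence of $\ker\bar\partial_g$ on $g$ to get uniform $H^s$ control on the basis elements near $g_0$. Your identification of the only real technical issue---handling the $\bar\partial$-operator with $H^{s-1}$ coefficients when $g$ is merely $H^s$---is accurate, and the algebra property of $H^s$ for $s>1$ on a surface is exactly what makes this go through.
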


As a consequence of Lemma \ref{lem:l2small}	we can apply Proposition \ref{prop:metricbounds} for all sufficiently small $\eta$. We can then integrate \eqref{eq:hsl2}, and estimating as in \eqref{eq:L2small} we find
\begin{equation}
\label{eq:Ckbound}
\norm{ g_\eta(t) - g_0}_{C^k(M,g_0)} \leq C \eta \sqrt{t} \leq C \eta.
\end{equation}

Here $C$ depends on $T$, $k$, $g_0$ and $E_0 = E(u_0,g_0)$ and we used the fact that $H^s$ embeds continuously into $C^k$ for sufficiently large $s$. We first collect here a number of useful consequences of the above results.

\begin{rmk}
	\label{rmk:eta}
	
	By the previous estimates we can find $\eta_0 = \eta_0 (T,g_0, E_0, N) \leq 1$ such that for $\eta \leq \eta_0$ and $t \in [0,T]$ the following properties hold:
	\begin{enumerate}
		\item \label{Claim:eta1} For any vector field $X \in \Gamma(TM)$ we have $\frac{1}{2} |X|_{g_0} \leq |X|_{g_\eta(t)} \leq 2 |X|_{g_0}$.
		\item \label{Claim:eta2} For any smooth map $u: M \to N$ we have $\frac{1}{2} |du|_{g_0} \leq |du|_{g_\eta(t)} \leq 2 |du|_{g_0}$.
		\item \label{Claim:eta3} For any $(0,2)$-tensor $h \in \Gamma( \Sym^2 T^*M )$ we have $\frac{1}{2} |h|_{g_0} \leq |h|_{g_\eta(t)} \leq 2 |h|_{g_0}$.
		\item \label{Claim:eta5} The injectivity radius satisfies $\frac{1}{2} \inj_{g_0} \leq \inj_{g_\eta(t)} \leq 2 \inj_{g_0}$. In particular we necessarily have existence of a weak solution $(u_\eta(t),g_\eta(t))$ in the sense of \cite{rupflin_existence} up to time $T$.
		\item \label{Claim:eta6} For all $x \in M$ and $r > 0$ metric balls satisfy $B^{g_0}_{\frac{r}{2}}(x) \subset B^{g_{\eta}(t)}_{r}(x) \subset B^{g_0}_{2r}(x)$ .
		\item The difference of the inverse metric tensors is bounded by $|g_0^{ij}-g_\eta(t)^{ij}| \leq C\norm{g_0-g_\eta(t)}_{C^0(M,g_0)}$. Similarly, the difference of the Christoffel symbols of $g_0, g_\eta(t)$ is bounded by $C\norm{g_0-g_\eta(t)}_{C^1(M,g_0)}$.
		
		\item \label{Claim:eta} The metrics $g_\eta(t)$ lie in the neighbourhood $W$ from Lemma \ref{lem:uniformPg}, and in particular 
		$\norm{ \ddt g_\eta(t) }_{C^k(M,g_0)} \leq C \eta^2 \leq C$ with $C$ depending only on $k$, $g_0$ and $E_0$.

	\end{enumerate}

\end{rmk}

An immediate consequence is the desired metric convergence on fixed time intervals.

\begin{cor}
	\label{cor:metriceta0}
	Under the assumptions of Theorem \ref{thm:eta0}, we have uniform convergence in $t$ of $g_\eta(t)$ to $g_0$ on $[0,T]$ in $C^k(M,g_0)$ for any $k \in \mathbb{N}$ as $\eta \downarrow 0$.
\end{cor}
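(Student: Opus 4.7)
The plan is essentially to observe that the estimate \eqref{eq:Ckbound} already supplies exactly what the corollary asks for, and to spell out why its right-hand side is uniform in $t \in [0,T]$ and tends to zero as $\eta \downarrow 0$. The only thing that needs to be checked is that the hypotheses used to derive \eqref{eq:Ckbound} are in force uniformly for all $\eta$ small enough.

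First I would fix $k \in \mathbb{N}$, pick $s > 3$ large enough that the Sobolev embedding $H^s \hookrightarrow C^k$ (with respect to the fixed coordinate charts used in Proposition \ref{prop:metricbounds}) is continuous, and set $\epsilon := \tfrac{1}{2} \ell(g_0) > 0$, which is positive since $g_0 \in \M$ is smooth. This furnishes a threshold $\theta = \theta(\epsilon, s) > 0$ from Proposition \ref{prop:metricbounds}. Lemma \ref{lem:l2small} gives $L(g_\eta, [0,T]) \leq \eta \sqrt{T E_0}$, so choosing $\eta \leq \eta_1$ sufficiently small (depending on $T, E_0, g_0, s$) guarantees $L(g_\eta, [0,T]) \leq \theta$. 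Then Proposition \ref{prop:metricbounds} applies to the horizontal curve $g_\eta$, yielding the pointwise-in-time bound $\norm{ \pt g_\eta(t) }_{H^s} \leq C \norm{ \pt g_\eta(t) }_{L^2(M,g_\eta(t))}$ on $[0,T]$.

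Integrating this inequality in $t$, using the fundamental theorem of calculus for the $H^s$-valued curve $g_\eta(\cdot)$, and then applying H\"older's inequality together with the time-integrated energy identity exactly as in the proof of Lemma \ref{lem:l2small}, I get
\begin{equation*}
\norm{ g_\eta(t) - g_0 }_{H^s} \leq \int_0^t \norm{ \pt g_\eta(r) }_{H^s} \dr \leq C \eta \sqrt{t} \leq C \eta \sqrt{T}
\end{equation*}
for all $t \in [0,T]$. The Sobolev embedding $H^s \hookrightarrow C^k(M,g_0)$ then turns this into the estimate \eqref{eq:Ckbound} already displayed in the text, namely $\norm{g_\eta(t) - g_0}_{C^k(M,g_0)} \leq C \eta$ with $C = C(T,k,g_0,E_0)$ independent of both $\eta$ and $t$.

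Since the right-hand side is independent of $t \in [0,T]$ and tends to $0$ as $\eta \downarrow 0$, this is precisely the claimed uniform-in-$t$ convergence in $C^k(M,g_0)$ for each $k$. There is no genuine obstacle to overcome here: the work is already carried out in Lemma \ref{lem:l2small}, Proposition \ref{prop:metricbounds}, and the derivation of \eqref{eq:Ckbound}, and the corollary is merely the statement that \eqref{eq:Ckbound} is a uniform-in-$t$ convergence statement.
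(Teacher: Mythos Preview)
Your argument is correct and follows exactly the paper's approach; the corollary is indeed an immediate reading of estimate \eqref{eq:Ckbound}. The paper's proof adds one sentence you omit: Remark~\ref{rmk:eta} (item~\ref{Claim:eta5}) ensures that for small $\eta$ the injectivity radius cannot degenerate, so the weak solution $(u_\eta, g_\eta)$ actually exists on all of $[0,T]$ --- this is needed before you can legitimately invoke Lemma~\ref{lem:l2small} and Proposition~\ref{prop:metricbounds} on the full interval.
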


\begin{proof}
The convergence can be seen as in \eqref{eq:Ckbound}. Note that the above remark guarantees that the flows considered actually exist up to time $T$ (as degeneration of the metric is ruled out for sufficiently small $\eta$).
\end{proof}

\subsection{$L^2$-Convergence of the Map for small times}
We proceed by proving  $L^2$-closeness of the flow \eqref{eq:flow} for small $\eta$ to the underlying harmonic map flow under a boundedness assumption we will justify later. 

\begin{prop}

	\label{prop:l2flowsmall}
	Take $M$ to be a smooth closed oriented surface of genus $\gamma \geq 2$ and $N$ to be a smooth closed Riemannian manifold. As before denote by $u(t)$, $u_\eta(t)$  the solutions (with respect to initial data $(u_0,g_0)$) to the usual harmonic map flow \eqref{eq:hmf}, respectively Teichm\"uller harmonic map flow \eqref{eq:flow} with coupling constant $\eta$. Let $T>0$ be such that the flow $u(t)$ is smooth on $[0,T]$. Further assume there exist constants $C_1 > 0, \eta_1 > 0$ (allowed to depend on $M,N, u_0,g_0$ and $T$) such that the flow $u_\eta(t)$ admits a smooth solution for all $\eta \leq \eta_1$ which satisfies
	\begin{equation}
	\int_{0}^{T} \int_M |\nabla u_\eta(t)|^4_{g_0} dv_{g_0} dt \leq C_1.
	\end{equation} 
	
	Then given any $\eps > 0$, we can find $\eta_\eps > 0$, depending on $\eps$, $\eta_1$, $C_1$, $T$, $u_0$, $(M,g_0)$ and $N$, such that for all $\eta_\eps \geq \eta > 0$ we have  
	\begin{equation}
	\norm{u_\eta(t) - u(t)}_{L^2(M,g_0)} < \eps
	\end{equation}
	for $t \in [0,T]$.
	
\end{prop}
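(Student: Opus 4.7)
The plan is to embed $N$ isometrically into some $\mathbb{R}^K$, view $u$ and $u_\eta$ as $\mathbb{R}^K$-valued, set $w := u_\eta - u$, and estimate $\phi(t) := \frac{1}{2}\|w(t)\|_{L^2(M,g_0)}^2$ via a Gronwall-type argument. Using the extrinsic form of the tension field, $\tau_g(u) = \Delta_g u + A(u)(du,du)_g$ with $A$ the second fundamental form of $N$ in $\mathbb{R}^K$, I decompose
\[
\partial_t w = \Delta_{g_0} w + (\Delta_{g_\eta} - \Delta_{g_0})u_\eta + \bigl[A(u_\eta)(du_\eta,du_\eta)_{g_\eta} - A(u)(du,du)_{g_0}\bigr].
\]
Pairing with $w$ and integrating against $dv_{g_0}$, the first piece produces the favorable $-\|\nabla w\|_{L^2(g_0)}^2$ after integration by parts. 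The goal is to show the remaining two pieces can be controlled as $O(\eta^2)$ plus $b(t)\phi(t)$ with $\int_0^T b(t)\,dt$ bounded uniformly in $\eta$.

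For the Laplacian defect, I would integrate by parts against $dv_{g_\eta}$ and convert back to $dv_{g_0}$ via $dv_{g_\eta} = \rho\, dv_{g_0}$ with $\rho := \sqrt{\det g_\eta/\det g_0}$, producing terms involving $g_0^{ij} - g_\eta^{ij}$ and $d\rho$ paired with one derivative each of $w$ and $u_\eta$. The estimate $\|g_\eta - g_0\|_{C^1(g_0)} \leq C\eta$ coming from \eqref{eq:Ckbound} and Remark \ref{rmk:eta}, combined with the uniform energy bound $\|du_\eta\|_{L^2(g_0)} \leq C\sqrt{E_0}$ from energy monotonicity, then controls this contribution by $\tfrac{1}{4}\|\nabla w\|_{L^2}^2 + C\eta^2 + C\eta\|w\|_{L^2}$.

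The main obstacle is the nonlinear defect. The metric-difference piece $A(u_\eta)(du_\eta,du_\eta)_{g_\eta} - A(u_\eta)(du_\eta,du_\eta)_{g_0}$ carries an $O(\eta)$ factor and is routine. However, in the dangerous piece $A(u_\eta)(du_\eta,du_\eta)_{g_0} - A(u)(du,du)_{g_0}$ a naive pointwise expansion yields contributions of the form $(|\nabla u_\eta|+|\nabla u|)|\nabla w|$, so that the integral of $|w|\cdot(|\nabla u_\eta|+|\nabla u|)|\nabla w|$ cannot be absorbed into $-\|\nabla w\|_{L^2}^2$ since $\|\nabla u_\eta\|_{L^\infty}$ is uncontrolled. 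To sidestep this I exploit the geometric fact that $A(u)(du,du)_g$ is normal to $N$ at $u$, together with the tubular-neighborhood estimate $|\pi^\perp_p(p-q)| \leq C|p-q|^2$ valid for $p,q \in N$, where $\pi^\perp_p$ denotes projection onto the normal space at $p$. This lets me rewrite
\[
\int_M \bigl\langle w,\, A(u_\eta)(du_\eta,du_\eta)_{g_0}\bigr\rangle\, dv_{g_0} = \int_M \bigl\langle \pi^\perp_{u_\eta} w,\, A(u_\eta)(du_\eta,du_\eta)_{g_0}\bigr\rangle\, dv_{g_0}
\]
(and similarly with the roles of $u_\eta$ and $u$ exchanged), producing the purely quadratic-in-$w$ bound $C\int |w|^2(|\nabla u_\eta|^2 + |\nabla u|^2)\,dv_{g_0}$ with no $|\nabla w|$ in the integrand.

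To close the estimate I invoke the $2$D Gagliardo-Nirenberg inequality $\|w\|_{L^4}^2 \leq C\|w\|_{L^2}(\|w\|_{L^2} + \|\nabla w\|_{L^2})$, which converts $\int|w|^2|\nabla u_\eta|^2 \leq \|w\|_{L^4}^2\|\nabla u_\eta\|_{L^4}^2$ into $\epsilon\|\nabla w\|_{L^2}^2 + C\phi\,(\|\nabla u_\eta\|_{L^4}^2 + \|\nabla u_\eta\|_{L^4}^4)$ after Young's inequality. Absorbing every $\|\nabla w\|_{L^2}^2$ contribution into the favorable term leaves
\[
\phi'(t) \leq C\eta^2\bigl(1 + \|\nabla u_\eta\|_{L^4(g_0)}^4\bigr) + C\bigl(1 + \|\nabla u_\eta\|_{L^4(g_0)}^2 + \|\nabla u_\eta\|_{L^4(g_0)}^4\bigr)\phi(t).
\]
The hypothesis $\int_0^T\int_M |\nabla u_\eta|^4_{g_0}\,dv_{g_0}\,dt \leq C_1$ bounds both $\int_0^T \|\nabla u_\eta\|_{L^4}^4\,dt$ and (by H\"older) $\int_0^T \|\nabla u_\eta\|_{L^4}^2\,dt$ uniformly in $\eta$, so the coefficient is integrable and the inhomogeneous term is $O(\eta^2)$. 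Gronwall with $\phi(0)=0$ then gives $\phi(t) \leq C(T,C_1,E_0,N)\,\eta^2$ on $[0,T]$, whence $\|u_\eta(t) - u(t)\|_{L^2(g_0)} \leq C\eta$, and one takes $\eta_\epsilon := \epsilon/C$.
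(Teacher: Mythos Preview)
Your argument is correct and reaches the same conclusion as the paper, but the treatment of the second fundamental form term is genuinely different. The paper does \emph{not} use the normal-projection trick $|\pi^\perp_p(p-q)|\le C|p-q|^2$; instead it expands $A_{g_0}(u)(\nabla u,\nabla u)-A_{g_0}(u_\eta)(\nabla u_\eta,\nabla u_\eta)$ directly via the mean value theorem to get the pointwise bound $C(|\nabla w||\nabla V|+|w||\nabla V|^2)$ with $|\nabla V|=\max(|\nabla u|,|\nabla u_\eta|)$, and then handles the resulting integral $\int |w||\nabla V||\nabla w|$ by H\"older with exponents $(4,4,2)$ followed by the same Gagliardo--Nirenberg inequality you invoke. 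So your assertion that this term ``cannot be absorbed'' because $\|\nabla u_\eta\|_{L^\infty}$ is uncontrolled is mistaken: one never needs $L^\infty$ control, only the assumed $L^4$ space-time bound. Your normal-projection device is nonetheless a clean alternative, since it converts the nonlinear term into a purely quadratic expression in $w$ and thereby skips one application of Young's inequality. A minor simplification you missed: along the flow \eqref{eq:flow} the volume form is preserved, so $\rho\equiv 1$ and the $d\rho$ contributions in your Laplacian-defect estimate vanish.
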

\begin{proof}
		We use techniques from \cite[Section 3]{rupflin_existence}, based on \cite{struweCMH}.  In particular the difference $w = u - u_\eta$ satisfies the evolution equation
		\begin{equation}
		\pddt w - \lap_{g_0} (w) = (\lap_{g_0} - \lap_{g_\eta}) (u_\eta) + A_{g_0} (u) ( \nabla u, \nabla u) - A_{g_\eta} (u_\eta) (\nabla u_\eta, \nabla u_\eta),
		\end{equation}
		where $A$ denotes the second fundamental form of the target $N \hookrightarrow \mathbb{R}^n$ and we write $A_g(u)(\nabla u, \nabla u) = g^{ij}A(u)(\partial_i u, \partial_j u)$.
		
		Multiplying this equation by $w$ and integrating over $(M,g_0)$ together with partial integration (with respect to $g_0$) we obtain
		\begin{align}
		\label{eq:l2small0}
		\begin{split}
		\frac{1}{2} \ddt \norm{w}^2_{L^2(M,g_0)} + \norm{\nabla w}^2_{L^2(M,g_0)}  = &\int_M (\lap_{g_0} - \lap_{g_\eta}) (u_\eta)w dv_{g_0}  \\ + &\int_M A_{g_0} (u) ( \nabla u, \nabla u)w dv_{g_0} \\
		- &\int_M A_{g_\eta} (u_\eta) (\nabla u_\eta, \nabla u_\eta)w dv_{g_0}.
		\end{split}
		\end{align}
		We now estimate the terms on the right hand side. All norms in the following, as well as the volume form $dv$, are taken to be defined with respect to $g_0$. Also assume from now on that $\eta \leq \eta_0$ so Remark \ref{rmk:eta} applies, then for the first term we find
		\begin{align} \label{eq:l2small1}
		\left|\int_M (\lap_{g_0} - \lap_{g_\eta})(u_\eta)wdv \right| \leq C  \norm{g_0-g_\eta}_{C^0} \norm{\nabla w}_{L^2} \norm{\nabla u}_{L^2}  \leq C  \norm{g_0-g_\eta}_{C^0}  \norm{\nabla w}_{L^2}
		\end{align}
		where we used integration by parts (with respect to both $g_0$ and $g_\eta$, as the flow \eqref{eq:flow} leaves the volume form invariant, see \cite{RT}), and Remark \ref{rmk:eta} to estimate the difference of the inverse metric tensors and finally $\norm{\nabla w}^2_{L^2} \leq C E_0 \leq C$.
		
		Here and in the following we let $C$ denote positive constants, which may depend on $(M,g_0)$, $N$ and the initial energy $E(u_0,g_0)$.
		
		We proceed to estimate the second fundamental form terms. Note that
		\begin{align}
		\begin{split}
		\label{eq:A1}
		A_{g_0} (u) ( \nabla u, \nabla u) - A_{g_\eta} (u_\eta) (\nabla u_\eta, \nabla u_\eta) =& A_{g_0} (u) ( \nabla u, \nabla u) -  A_{g_0} (u_\eta) (\nabla u_\eta, \nabla u_\eta)\\
		+& A_{g_0} (u_\eta) (\nabla u_\eta, \nabla u_\eta) - A_{g_\eta} (u_\eta) (\nabla u_\eta, \nabla u_\eta),
		\end{split}
		\end{align}																						
		which we estimate pointwise
		\begin{align}
		\label{eq:A2}
		|A_{g_0} (u) ( \nabla u, \nabla u) -  A_{g_0} (u_\eta) (\nabla u_\eta, \nabla u_\eta)| &\leq C ( |\nabla w| |\nabla V| + |w| |\nabla V|^2) \\
				\label{eq:A3}
		|A_{g_0} (u_\eta) (\nabla u_\eta, \nabla u_\eta) - A_{g_\eta} (u_\eta) (\nabla u_\eta, \nabla u_\eta)| &\leq C \norm{g_0-g_\eta}_{C^0} |\nabla V|^2,
		\end{align}
		with $|\nabla V| := \max\{ |\nabla u|_{g_0}, |\nabla u_\eta|_{g_0}\}$. 
		The first inequality follows by an application of the mean value theorem to $A$ (which is a smooth function on $N$) (see also \cite{struweCMH} for this estimate).  The second inequality again follows by estimating the difference of the inverse metric tensors as in Remark \ref{rmk:eta}.
		Multiplying \eqref{eq:A2} and \eqref{eq:A3} by $w$, integrating over $M$ and applying H\"older's inequality gives
		\begin{align}
		\begin{split}
		\label{eq:l2small2}
		\int_M& A_{g_0} (u) ( \nabla u, \nabla u)w - A_{g_\eta} (u_\eta) (\nabla u_\eta, \nabla u_\eta)w dv
		\\ &\leq C \norm{g_0-g_\eta}_{C^0} \norm{\nabla V}^2_{L^4}\norm{w}_{L^2} + C \norm{ \nabla w }_{L^2}  \norm{\nabla V}_{L^4}  \norm{ w }_{L^4} + C \norm{\nabla V}^2_{L^4} \norm{w}^2_{L^4}.
		\end{split}
		\end{align}
		Combining \eqref{eq:l2small1} and \eqref{eq:l2small2} with \eqref{eq:l2small0} we arrive at our main estimate (see also \cite[Section 4]{rupflin_existence} for a similar estimate)
		\begin{align}
		\begin{split}
		\label{eq:intest}
		\frac{1}{2} \ddt \norm{w}^2_{L^2} + \norm{\nabla w}^2_{L^2} \leq C  \norm{g_0-g_\eta}_{C^0}  \left( 
		\norm{\nabla w}_{L^2} + \norm{w}_{L^2}   \norm{\nabla V}^2_{L^4} \right) \\ + C \norm{ \nabla w }_{L^2}  \norm{\nabla V}_{L^4}  \norm{ w }_{L^4} + C \norm{\nabla V}^2_{L^4} \norm{w}^2_{L^4}.
		\end{split}
		\end{align}
		
		The strategy is now to derive an estimate for $\ddt \norm{w(t)}_{L^2}$ that allows us to apply Gronwall's lemma to deduce our desired smallness. In particular we need to control all the terms in \eqref{eq:intest} through quantities integrable in time (e.g. $\norm{\nabla V}^4_{L^4}$ by the assumption on $u_\eta$ and the smoothness of $u(t)$),  $\norm{\nabla w}^2_{L^2}$ and $\norm{w(t)}^2_{L^2}$.
		
		To this end, recall the following consequence of Sobolev's inequality :
		\begin{equation}
		\label{eq:sobolev}
		\norm{w}^2_{L^4} \leq C \norm{w}_{L^2} ( \norm{w}_{L^2} + \norm{ \nabla w }_{L^2} ).
		\end{equation}
		
		Using Young's inequality we further find
		\begin{equation*}
		C \norm{ \nabla w }_{L^2}  \norm{\nabla V}_{L^4}  \norm{ w }_{L^4} + C \norm{\nabla V}^2_{L^4} \norm{w}^2_{L^4}
		\leq \frac{1}{4} \norm{ \nabla w}^2_{L^2} + C \norm{\nabla V}^2_{L^4} \norm{w}^2_{L^4}.
		\end{equation*}
		Together with \eqref{eq:sobolev} and Young's inequality this implies
		\begin{equation}
		C \norm{ \nabla w }_{L^2}  \norm{\nabla V}_{L^4}  \norm{ w }_{L^4} + C \norm{\nabla V}^2_{L^4} \norm{w}^2_{L^4}
		\leq \frac{1}{2} \norm{ \nabla w}^2_{L^2} 
		+ C ( 1 + \norm{\nabla V}^4_{L^4}) \norm{ w} ^2_{L^2},
		\end{equation}
		which is of the desired form.
		
		Similarly we can estimate the first term in the right hand side of \eqref{eq:intest} via Young's inequality to obtain
		\begin{multline}
		C \, \norm{g_0-g_\eta}_{C^0}  \left( \norm{\nabla w}_{L^2} + \norm{w}_{L^2}  \norm{\nabla V}^2_{L^4} \right) \\ \leq C \, \norm{g_0-g_\eta}_{C^0} ^ 2 + \frac{1}{4} \norm{\nabla w}^2_{L^2} + C \, \norm{w}^2_{L^2}   \norm{\nabla V}^4_{L^4}.
		\end{multline}
		
		We now additionally assume $\eta \leq \eta_1$, so $\norm{\nabla V}^4_{L^4} \in L^1([0,T])$. Writing  $\psi(t) = 1 + \norm{\nabla V}^4_{L^4}  \, \in L^1([0,T])$ (cf.  \cite{rupflin_existence}) we arrive at
		\begin{equation}
		\frac{1}{2} \ddt \norm{w}^2_{L^2}  \leq C \, \norm{g_0-g_\eta}_{C^0} ^ 2 + C \, \norm{w}^2_{L^2} \psi(t).
		\end{equation}
		We can apply Gronwall's lemma to this inequality for the function $\norm{w(t)}^2_{L^2}$ and finally get
		\begin{equation}
		\norm{w(t)}^2_{L^2} \leq C \int_{0}^{t} \norm{g_0-g_\eta}_{C^0}^2 e ^ {C \int_{0}^{t} \psi(s) ds}
		\\ \leq  C \int_{0}^{t} \norm{g_0-g_\eta}_{C^0}^2 e ^ {C (t + 1) }.
		\end{equation}
		
		This estimate will be valid as long as $\eta \leq \min(\eta_1,\eta_0)$.
		Hence from \eqref{eq:Ckbound} we see that we can indeed choose $\eta_\eps$ small such that 
		\begin{equation}
		\norm{u(t) - u_\eta(t)}_{L^2(M,g_0)} < \eps
		\end{equation} 
		for all $t \in [0,T]$ and $\eta \leq \eta_\eps$.
\end{proof}

The basic ingredient  for proving the $W^{1,4}$ bound required in the above proposition is following consequence of a standard small energy regularity estimate for almost-harmonic maps:

\begin{lemma}
\label{lem:h2bound}
Take $N$ to be a smooth closed manifold and $M$ a smooth closed oriented surface of genus $\gamma \geq 2$ and $g  \in \mathcal{M}_{-1}$. Consider a smooth map $u: (M,g) \to N$ with energy bounded by $E(u,g) \leq E_0$. Then there exists a constant $\eps_0 = \eps_0(N) > 0$ such that the following holds: Given $r>0$ such that
\begin{equation}
\label{eq:energyconcentration}
E(u,B_r(x)) \leq \eps_0
\end{equation}
for all $x \in M$, there exists $C< \infty$ (depending on $r$, $E_0$, $(M,g)$ and $N$) such that

\begin{equation}
\label{eq:h2bound}
\int_M   |\nabla u|^4 + |\nabla^2 u|^2  dv_g \leq C \left( 1  + \int  |\tau_g(u)|^2 dv_g \right).
\end{equation}

\end{lemma}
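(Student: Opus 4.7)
The plan is to reduce the claimed global bound to a local $\eps$-regularity estimate on small balls, and then obtain the global statement by summing over a finite cover of $M$. The starting point is the extrinsic form of the tension field,
\begin{equation*}
\lap_g u = \tau_g(u) - A(u)(\nabla u, \nabla u),
\end{equation*}
which exhibits $u$ as an almost-harmonic map whose nonlinearity is quadratic in $\nabla u$ and hence borderline in two dimensions.

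First I would establish the following local version: there is $\eps_0 = \eps_0(N)>0$ such that for any geodesic ball $B_\rho(x) \subset (M,g)$ with $\rho \leq r$ and $E(u,B_\rho(x))\leq\eps_0$, one has
\begin{equation*}
\int_{B_{\rho/2}(x)}\bigl(|\nabla u|^4 + |\nabla^2 u|^2\bigr)dv_g \;\leq\; C \int_{B_\rho(x)} \bigl(|\tau_g(u)|^2 + |\nabla u|^2\bigr)dv_g,
\end{equation*}
where $C$ depends on $\rho$, $N$ and $(M,g)$. This is the standard small-energy estimate for almost-harmonic maps in two dimensions (as used, for example, in \cite{struweCMH} and \cite{rupflin_existence}); I would sketch it by passing to a conformal chart (so $\lap_g$ becomes the flat Laplacian up to a bounded factor, using conformal invariance of the Dirichlet energy in dimension two), testing against $\varphi^2 |\nabla u|^{2}\nabla u$ with a cut-off $\varphi$ supported in $B_\rho(x)$ and equal to $1$ on $B_{\rho/2}(x)$, then integrating by parts and applying the Sobolev / Ladyzhenskaya inequality to absorb the borderline quadratic term $A(u)(\nabla u,\nabla u)$ using smallness of $\int_{B_\rho(x)}|\nabla u|^2\leq\eps_0$. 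Once $\nabla u \in L^4(B_{\rho/2}(x))$ is controlled this way, the right-hand side of $\lap_g u=\tau_g(u)-A(u)(\nabla u,\nabla u)$ lies in $L^2$ with a corresponding quantitative bound, so interior elliptic regularity on a further slight shrinking yields the claimed $W^{2,2}$ bound (possibly re-covering $B_{\rho/2}$ by smaller balls to avoid notational clutter).

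Second I would do the globalisation. Since $(M,g)$ is a fixed compact hyperbolic surface, there exists a finite cover of $M$ by balls $\{B_{r/2}(x_i)\}_{i=1}^{K}$ whose doubles $B_r(x_i)$ have multiplicity bounded by a constant depending only on $r$ and $(M,g)$. The hypothesis $E(u,B_r(x))\leq\eps_0$ for every $x \in M$ is precisely what allows the local estimate to be applied at each $x_i$. Summing the local bounds, and using that the bounded-multiplicity cover gives $\sum_i \int_{B_r(x_i)}\!\cdot\, dv_g \leq C \int_M\!\cdot\,dv_g$, together with $\int_M|\nabla u|^2 dv_g = 2E(u,g)\leq 2E_0$, yields
\begin{equation*}
\int_M\bigl(|\nabla u|^4+|\nabla^2 u|^2\bigr)dv_g \;\leq\; C\Bigl(1+\int_M|\tau_g(u)|^2 dv_g\Bigr),
\end{equation*}
which is the desired estimate.

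The main obstacle is the local $\eps$-regularity bound, since it must handle the critical quadratic nonlinearity $A(u)(\nabla u,\nabla u)$ in two dimensions; however, because the target regularity is only $W^{1,4}\cap W^{2,2}$ rather than $L^\infty$-bounds on $\nabla u$, no hard Wente/Rivi\`ere-type machinery is needed and the argument reduces to a direct cut-off/Sobolev absorption bootstrap of a kind that is by now classical. The remaining step — the covering argument — is routine given the uniform geometry of $(M,g)$.
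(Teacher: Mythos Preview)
Your proposal is correct and follows essentially the same approach as the paper: cite/establish the local small-energy $W^{2,2}\cap W^{1,4}$ estimate on balls where $E(u,B_r(x))\leq\eps_0$, then cover $M$ by finitely many half-radius balls and sum. The paper simply quotes the local estimate from \cite{rupflin_existence} rather than sketching its proof as you do, and it briefly notes that if $r\geq\inj(M,g)$ one may replace $r$ by $\tfrac{1}{2}\inj(M,g)$, but otherwise the arguments coincide.
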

\begin{proof}
For $0< r < \inj(M,g)$ we have the following local regularity estimate (e.g. \cite{rupflin_existence}) for $u: M \to N$ with $E(u,B_r(x)) < \eps_0$:

\begin{equation}
\label{eq:localeps}
\int_{B_{\frac{r}{2}(x)}} |\nabla^2 u|^2 + |\nabla u|^4 dv_g \leq C\left( \frac{E(u,B_r(x))}{r^2} + \int_{B_r(x)} |\tau_g(u)|^2 dv_g \right).
\end{equation}

Now let $u$, $r$ be as in the lemma and further assume $r < \inj(M,g)$ for now. We can then cover $M$ by finitely many balls $B_{\frac{r}{2}}(x_i)$, $x_i \in M$, and apply estimate \eqref{eq:localeps} on each, using $E(u,B_r(x_i)) < \eps_0$. After summing and absorbing additional terms into the constant we obtain the claim.
Note that if $r \geq \inj(M,g)$, we can carry out the same argument replacing $r$ with e.g. $\frac{\inj(M,g)}{2}$.

\end{proof}

To apply this lemma, we need to find some $R>0$ such that $E(u_\eta(t),B_R(x)) < \eps_0$, i.e. control the concentration of energy along the flow. Assuming the existence of such an $R$ for a moment, we obtain our desired bound as a consequence of the following lemma.

\begin{lemma}
	\label{lem:W14bound}
	As before, take $N$ to be a smooth closed  manifold and $M$ a smooth oriented closed surface of genus $\gamma \geq 2$. Assume that there exists $R>0$ such that  $E(u_\eta(t),B_R(x)) < \eps_0$ for all $x \in M$, with $\eps_0$ as in Lemma \ref{lem:h2bound}.  Let $T>0, \eta_0 \geq \eta > 0$ and consider the solution $(u_\eta(t),g_\eta(t))$ with coupling constant $\eta$  to \eqref{eq:flow} defined on $[0,T]$ with initial data $(u_0,g_0)$. Then there exists $\eta_0 \geq \eta_1 > 0$ such that
	\begin{equation}
	\label{eq:claim1}
	\int_0^T \int_M |\nabla u_\eta|^4_{g_0} dv_{g_0} \leq C
	\end{equation}
	for all $\eta \leq \eta_1$ and some constant $C$, with $C$ and $\eta_1$ only depending on $(M,g_0)$, $u_0$, $R$, $N$ and $T$. Furthermore $|\nabla u_\eta(t)|_{g_0}$ is uniformly bounded in $L^2(M,g_0)$ for each $t \in [0,T]$ (with a bound only depending on $E_0=E(u_0,g_0)$).
\end{lemma}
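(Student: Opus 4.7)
My plan is to apply Lemma \ref{lem:h2bound} with respect to the evolving metric $g_\eta(t)$ at each fixed time, then integrate in time using the energy identity \eqref{eq:energy-identity}, and finally translate all resulting estimates back to the fixed reference metric $g_0$ via Remark \ref{rmk:eta}. The $L^2$ bound on $|\nabla u_\eta(t)|_{g_0}$ is essentially immediate: the energy identity gives $E(u_\eta(t),g_\eta(t))\leq E_0$ for every $t\in[0,T]$, and items \ref{Claim:eta2}--\ref{Claim:eta3} of Remark \ref{rmk:eta} (together with the volume-form invariance $dv_{g_\eta(t)}=dv_{g_0}$ noted in \cite{RT}) translate this to a uniform bound on $\norm{\nabla u_\eta(t)}_{L^2(M,g_0)}$ depending only on $E_0$.

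For the spacetime $L^4$ bound, the first step is to transfer the small-energy concentration hypothesis to the metric $g_\eta(t)$. Shrinking $R$ slightly and choosing $\eta_1\leq\eta_0$ small, items \ref{Claim:eta2}--\ref{Claim:eta3} and \ref{Claim:eta6} of Remark \ref{rmk:eta}, combined with the fact that the energy densities $|\nabla u_\eta|^2_{g_\eta(t)}dv_{g_\eta(t)}$ and $|\nabla u_\eta|^2_{g_0}dv_{g_0}$ differ by a factor arbitrarily close to $1$ as $\eta\downarrow 0$, yield the existence of $R'>0$ (depending only on $R$ and $g_0$) such that $E(u_\eta(t),B^{g_\eta(t)}_{R'}(x))<\eps_0$ for every $x\in M$, every $t\in[0,T]$ and every $\eta\leq\eta_1$. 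The injectivity-radius condition needed in Lemma \ref{lem:h2bound} is satisfied uniformly in $t$ and $\eta$ by item \ref{Claim:eta5} of Remark \ref{rmk:eta}.

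Applying Lemma \ref{lem:h2bound} pointwise in $t$ on $(M,g_\eta(t))$ therefore gives
\begin{equation*}
\int_M |\nabla u_\eta(t)|^4_{g_\eta(t)}\,dv_{g_\eta(t)} \leq C\Bigl(1+\int_M |\tau_{g_\eta(t)}(u_\eta(t))|^2\,dv_{g_\eta(t)}\Bigr),
\end{equation*}
with $C$ depending only on the stated data. Integrating over $[0,T]$ and using the energy identity \eqref{eq:energy-identity} to bound $\int_0^T\!\int_M|\tau_{g_\eta}(u_\eta)|^2\,dv_{g_\eta}\,dt\leq E_0$ controls the right-hand side by a constant depending only on $T$, $E_0$, $R$, $(M,g_0)$ and $N$. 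A final application of items \ref{Claim:eta2}--\ref{Claim:eta3} of Remark \ref{rmk:eta} (plus volume-form invariance) converts the left-hand side into the claimed bound \eqref{eq:claim1} for $|\nabla u_\eta|^4_{g_0}\,dv_{g_0}$.

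The only step that is not purely bookkeeping is the transfer of the small-energy concentration hypothesis from the fixed background metric to the time-dependent metric $g_\eta(t)$; this is the main (minor) obstacle, and it is overcome precisely because Remark \ref{rmk:eta} gives uniform $C^0$-closeness of $g_\eta(t)$ to $g_0$ as $\eta\downarrow 0$, so that both the radii of metric balls and the pointwise energy densities can be made to differ from their $g_0$-counterparts by a multiplicative factor as close to $1$ as we wish by shrinking $\eta_1$.
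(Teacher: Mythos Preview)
Your argument is correct and reaches the same conclusion, but it follows a slightly different route from the paper. The paper applies Lemma \ref{lem:h2bound} at each time with respect to the \emph{fixed} metric $g_0$, so that the constant in \eqref{eq:h2bound} is manifestly independent of $t$ and $\eta$; the price is that the right-hand side involves $\tau_{g_0}(u_\eta)$ rather than $\tau_{g_\eta}(u_\eta)$, and the paper must swap these via the pointwise estimate \eqref{eq:tensionbound}. This produces error terms containing $|\nabla^2 u_\eta|^2_{g_0}$, which is precisely why the paper retains that term from \eqref{eq:h2bound} and only drops it after absorption. Your approach instead applies Lemma \ref{lem:h2bound} on $(M,g_\eta(t))$, which hands you $\tau_{g_\eta(t)}(u_\eta)$ directly and makes the integration-plus-energy-identity step immediate; you never need the second-order term at all.

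The one point you pass over a little quickly is that the constant $C$ in Lemma \ref{lem:h2bound} is stated to depend on $(M,g)$, so when you apply it with $g=g_\eta(t)$ you must check that this dependence is uniform in $t$ and $\eta$. Inspecting the proof of Lemma \ref{lem:h2bound}, the dependence on $g$ enters only through the injectivity radius and the number of balls of radius $r/2$ needed to cover $M$; both are controlled uniformly by items \ref{Claim:eta5} and \ref{Claim:eta2}--\ref{Claim:eta3} of Remark \ref{rmk:eta}, so the claim is justified. With that remark added, your argument is complete and arguably cleaner than the paper's, while the paper's version has the advantage of making every constant's dependence on $g_0$ explicit from the outset.
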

\begin{proof}

We apply the bound \eqref{eq:h2bound} at each time $t \in [0,T]$ to the map $u_\eta(t)$ with respect to the initial metric $g=g_0$ and radius $r=R$ . We find
\begin{equation}
\label{eq:estL4}
\int_M |\nabla u_\eta|_{g_0}^4 dv_{g_0} + |\nabla^2 u_\eta|_{g_0}^2 dv_{g_0} dt \leq C(1 + \int |\tau_{g_0}(u_\eta)|^2) dv_{g_0},
\end{equation}
with a constant $C$ depending on $(M,g_0)$, $R$, $u_0$ and $N$, where we take $\nabla$ to denote the connection induced by $g_0$.

We would like to use the energy identity \eqref{eq:energy-identity} to integrate this inequality and further estimate the right hand side. Thus, as in \cite{rupflin_existence}, we want to replace $\tau_{g_0}(u_\eta)$ with $\tau_{g_\eta}(u_\eta)$ in \eqref{eq:estL4}. To do this, we estimate $|\tau_{g_{\eta}}(u_\eta)-\tau_{g_0}(u_\eta)|$ pointwise
\begin{equation}
\label{eq:tensionbound}
|\tau_{g_{\eta}}(u_\eta)-\tau_{g_0}(u_\eta)| \leq C \norm{g_\eta(t)-g_0}_{C^1(M,g_0)} ( |\nabla u_\eta|_{g_0}^2 + |\nabla u_\eta|_{g_0} + |\nabla^2 u_\eta|_{g_0}).
\end{equation}
This can be seen using  Remark \ref{rmk:eta} to in particular estimate the difference of the Christoffel symbols with respect to $g_0$ and $g_\eta(t)$, see also the author's thesis \cite{huxolThesis} for more detail. This allows us to estimate
\begin{align}
\begin{split}
\int |\tau_{g_0}(u_\eta)|^2 dv_{g_0} \leq C&\int |\tau_{g_\eta}(u_\eta)|^2dv_{g_0} \\
+ C& \norm{g_\eta(t)-g_0}_{C^1(M,g_0)} \int |\nabla u_\eta|_{g_0}^4 + |\nabla u_\eta|_{g_0}^2 + |\nabla^2 u_\eta|_{g_0}^2dv_{g_0},
\end{split}
\end{align}
which we can use together with estimate \eqref{eq:estL4} to find
\begin{align}
\begin{split}
\int_M |\nabla u_\eta|_{g_0}^4 dv_{g_0} + |\nabla^2 u_\eta|_{g_0}^2 dv_{g_0} dt \leq C& + C \int |\tau_{g_\eta}(u_\eta)|^2dv_{g_0}  \\
+C& \norm{g_\eta(t)-g_0}_{C^1(M,g_0)} \int |\nabla u_\eta|_{g_0}^4 + |\nabla^2 u_\eta|_{g_0}^2dv_{g_0},
\end{split}
\end{align}
where we used $\int |\nabla u_\eta|_{g_0}^2 dv_{g_0} \leq CE_0$ (as by Remark \ref{rmk:eta} the energy densities are comparable). Thus after choosing $\eta_1$ sufficiently small, now assuming $\eta \leq \eta_1$ we can absorb the remaining extra terms on the right and obtain
\begin{equation}
\label{eq:estL42}
\int_M |\nabla u_\eta|_{g_0}^4 dv_{g_0} + |\nabla^2 u_\eta|_{g_0}^2 dv_{g_0} dt \leq C ( 1+ \int |\tau_{g_\eta}(u_\eta)|^2)dv_{g_0}
\end{equation}
with a constant $C$ also only depending on $(M,g_0)$, $u_0$, $T$, $R$ and $N$. We now drop the $|\nabla^2 u_\eta|_{g_0}^2$ term (it was only required to control the error introduced by switching the metric of the tension) and integrate \eqref{eq:estL42} over $[0,T]$:
\begin{equation}
\int_0^T \int_M |\nabla u_\eta|_{g_0}^4 dv_{g_0} dt  \leq C ( 1+ E_0) \leq C\end{equation}
 Here we used the energy identity \eqref{eq:energy-identity} to estimate the integral of the tension.
 
Finally $|\nabla u_\eta(t)|_{g_0}$ is uniformly bounded in $L^2(M,g_0)$ for $t \in [0,T]$ by our assumption on the metric combined with the monotonicity of the energy $E(u_\eta(t),g_\eta(t))$.
\end{proof}

Hence it remains to show that the energy concentration along the flow is controlled.
We first recall that under certain assumptions on the metric (which in particular by the previous section hold in our case) the evolution of energy along the flow is controlled uniformly for short times.

\begin{lemma}[{Cf. \cite[Lemma 3.3]{rupflin_existence}, which in turn adapts \cite[Lemma 3.6]{struweCMH}.}]
	\label{lem:localenergybound}
	Assume $(u_\eta(t),g_\eta(t))$ to be a weak (as defined in \cite{rupflin_existence}) solution to \eqref{eq:flow} on $[0,T]$, with smooth initial data $(u_0,g_0)$ and some coupling constant $\eta > 0$. Further assume that there exists some $C_1 > 0$ such that for all $t \in [0,T]$ and smooth maps $u: M \to N$, we have $C_1^{-1} \norm{du}_{g_\eta(t)} \leq \norm{du}_{g_0} \leq C_1 \norm{du}_{g_\eta(t)}$, as well as $\norm{\ddt g_\eta(t)}_{C^0(M,g_0)} \leq C_1$. Then for all $r< \inj(M,g_0)$ the following estimate holds with a positive constant $\tilde{C}=\tilde{C}(C_1,g_0,u_0)$ for all $t \in [0,T]$, $0\leq \delta \leq T-t$:
	
	\begin{equation}
	E(u_\eta(t+\delta),B_{\frac{r}{2}}(x)) \leq \tilde{C} ( E(u_\eta(t), B_r(x)) +  \frac{\delta}{r^2}). 
	\end{equation}
	Here the energies and geodesic balls are taken with respect to the initial metric $g_0$.
	
\end{lemma}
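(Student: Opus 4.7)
The plan is to adapt Struwe's classical local energy estimate for the harmonic map flow (and the variant in \cite{rupflin_existence}) to account for the evolving metric. A crucial simplification, noted in \cite{RT}, is that the flow \eqref{eq:flow} moves the metric in a trace-free direction, so the volume form $dv_{g_\eta(t)}$ is independent of $t$. This suggests working with the intrinsic local energy $F(t) := \int_M \phi^2 \, e_{g_\eta(t)}(u_\eta(t)) \, dv_{g_\eta(t)}$ rather than the $g_0$-energy; the time derivative then passes through the volume form and acts only on the integrand.

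First I would fix a cutoff $\phi \in C^\infty_c(B^{g_0}_r(x))$ with $\phi \equiv 1$ on $B^{g_0}_{r/2}(x)$, $0 \leq \phi \leq 1$, and $|\nabla_{g_0}\phi| \leq C/r$; this exists because $r < \inj(M, g_0)$. The assumed $C_1$-comparability of $g_0$ and $g_\eta(t)$ together with $dv_{g_\eta(t)} = dv_{g_0}$ gives $F(t) \leq C \, E(u_\eta(t), B^{g_0}_r(x))$ and $E(u_\eta(t), B^{g_0}_{r/2}(x)) \leq C \, F(t)$, so it suffices to show $F(t+\delta) \leq \tilde{C}(F(t) + \delta/r^2)$. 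Differentiating $F$ in time, using $\partial_t u_\eta = \tau_{g_\eta}(u_\eta)$ and the constancy of $dv_{g_\eta}$, yields
\begin{equation*}
F'(t) = \int_M \phi^2 \Bigl[ \tfrac{1}{2}(\partial_t g_\eta^{ij})\,\partial_i u_\eta \cdot \partial_j u_\eta + g_\eta^{ij}\,\partial_i u_\eta \cdot \partial_j \tau_{g_\eta}(u_\eta) \Bigr]\, dv_{g_\eta}.
\end{equation*}
An intrinsic integration by parts on the second term, valid because $\tau_{g_\eta}(u_\eta) \in u_\eta^{*}TN$ so the normal component of the componentwise Laplacian vanishes upon contraction with $\tau$, produces $-\int \phi^2 |\tau_{g_\eta}(u_\eta)|^2 \, dv_{g_\eta}$ together with the cross term $-2 \int \phi \langle d\phi, du_\eta\rangle_{g_\eta} \cdot \tau_{g_\eta}(u_\eta)\, dv_{g_\eta}$.

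Young's inequality absorbs half the cross term into the negative $|\tau|^2$ contribution and leaves $C r^{-2}|du_\eta|^2_{g_\eta}$ supported on the annulus $B^{g_0}_r(x) \setminus B^{g_0}_{r/2}(x)$. The metric-derivative term is bounded pointwise by $C\,|\partial_t g_\eta|_{g_\eta}|du_\eta|^2_{g_\eta} \leq C C_1 |du_\eta|^2_{g_\eta}$ using the $C^0$-hypothesis on $\partial_t g_\eta$ and metric comparability. Combining these and using $E(u_\eta(t), g_\eta(t)) \leq E_0$ from the energy identity \eqref{eq:energy-identity} yields $F'(t) \leq C\,F(t) + CE_0/r^2$. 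A Gronwall integration over $[t, t+\delta]$ (with $\delta \leq T$, so $e^{C\delta}$ is bounded) then gives $F(t+\delta) \leq \tilde{C}(F(t) + \delta/r^2)$, and converting back via metric comparability delivers the stated estimate. The main obstacle --- which is the only real departure from Struwe's original argument --- is handling the $\partial_t g_\eta^{ij}$ contribution; the whole strategy relies on writing the local energy intrinsically in $g_\eta$ so that this is the only extra term, and it is precisely what the hypothesis on $\|\partial_t g_\eta\|_{C^0}$ is designed to absorb.
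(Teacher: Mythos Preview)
Your proposal is correct and follows essentially the same route as the paper: localize with a cutoff, differentiate the intrinsic local energy $\int \phi^2 |du_\eta|^2_{g_\eta}\,dv_{g_\eta}$, integrate by parts, and control the extra $\partial_t g_\eta$ term using the $C^0$ hypothesis. The only difference is cosmetic: the paper bounds the metric-derivative term by $C\|\partial_t g_\eta\|_{C^0} E(u_\eta,g_\eta)\leq C E_0$ (a constant, since $\phi^2\leq 1$) rather than by $C F(t)$, obtaining $F'(t)\leq C/r^2$ directly and avoiding Gronwall; your Gronwall step makes $\tilde C$ depend on $T$ through $e^{CT}$, which is slightly weaker than the stated $\tilde C=\tilde C(C_1,g_0,u_0)$, but is trivially remedied by using the global energy bound instead.
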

\begin{proof}

	Take $\phi \in C^\infty_0(B_{r}(x), [0,1])$ to be a standard cut-off function, satisfying $ \phi \equiv 1$ on $B_\frac{r}{2}(x)$ and $|d \phi|_{g_0} \leq \frac{C}{r}$ (with some universal constant $C$). We can then multiply equation \eqref{eq:flow} for $u_\eta$ by $\phi^2 \pt u_\eta$ and integrate over $M$ with respect to $g_0$ (exactly as in \cite{rupflin_existence}) to arrive at
	\begin{equation}
	\label{eq:local0}
	0 = \int \phi^2 |\pt u_\eta|^2 dv_{g_0} - \int \phi^2 \pt u_\eta \lap_{g_{\eta}} u_\eta dv_{g_0}
	\end{equation}
	where we view the target as isometrically embedded via $N \hookrightarrow \mathbb{R}^n$. 
	Using integration by parts we find 
	\begin{align}
	- \int \phi^2 \pt u_\eta \lap_{g_{\eta}} u_\eta dv_{g_0} &=  \int \langle d (\phi^2 \pt u_\eta ), d  u_\eta \rangle_{g_\eta} dv_{g_0} \\
	&=  \int \pt u_\eta \langle d (\phi^2), d u_\eta \rangle_{g_\eta}dv_{g_0} + \int \phi^2 \langle d ( \pt u_\eta), d u_\eta \rangle_{g_\eta} dv_{g_0}. \label{eq:local1}	
	\end{align}
	We then note that 
	\begin{equation}
	\label{eq:local2}
	\frac{1}{2}	\ddt \int \phi^2 \langle  d u_\eta, d u_\eta \rangle_{g_\eta} dv_{g_0} = \int \phi^2 \langle d ( \pt u_\eta), d u_\eta \rangle_{g_\eta} dv_{g_0} + R(u_\eta,g_\eta)
	\end{equation}
	with an error term given by
	\begin{equation}
	R(u_\eta,g_\eta) = - \frac{1}{2} \int \phi^2 \langle \ddt g_\eta, du_\eta \tensor du_\eta \rangle_{g_\eta} dv_{g_0}
	\end{equation}
	which we can estimate using our assumptions as
	\begin{equation}
	\label{eq:inequalityR}
	|R(u_\eta,g_\eta)| \leq C \norm{ \ddt g_\eta}_{C^0(g_\eta)} E(u_\eta(t),g_\eta(t)) \leq C E(u_0,g_0) \leq C,
	\end{equation}
	with a constant $ C= C(C_1,u_0,g_0)$.
	We further estimate
	\begin{align}
	\begin{split}
	\label{eq:local3}
	\left| \int \pt u_\eta \langle d ( \phi^2), d u_\eta \rangle_{g_\eta}dv_{g_0}  \right| \leq& C \int |\phi| |\pt u_\eta| |d \phi|_{g_\eta} |d u_\eta|_{g_\eta} dv_{g_0} \\
	\leq& \int \phi^2 |\pt u_\eta|^2dv_{g_0} + C \int |d \phi|_{g_\eta}^2 |d u_\eta|^2_{g_\eta}dv_{g_0}.
	\end{split}
	\end{align}
	Note that we also have the bound $|d \phi|_{g_\eta(t)} \leq \frac{C}{r}$ with $C = C(C_1)$ by the assumption. Thus, combining \eqref{eq:local3}, \eqref{eq:local2}, \eqref{eq:local1} and \eqref{eq:inequalityR} with \eqref{eq:local0} we find
	\begin{equation}
	\label{eq:inequality1}
	\frac{1}{2}	\ddt \int \phi^2 | d u_\eta |^2_{g_\eta} dv_{g_0} \leq  \frac{C}{r^2} + C,
	\end{equation}
	where as before $C=C(C_1,u_0,g_0)$.
	Note that $r < {\inj}(M,g_0)$, hence $r$ is bounded from above in terms of only the genus of $M$, and we can simplify the above estimate to 
	\begin{equation}
	\label{eq:inequalityfinal}
	\frac{1}{2}	\ddt \int \phi^2 | d u_\eta |^2_{g_\eta} dv_{g_0} \leq  \frac{C}{r^2} 
	\end{equation}
	after adjusting the constant $C$ (now satisfying $C=C(M,g_0,u_0,C_1)$).
	
	We can integrate inequality \eqref{eq:inequalityfinal} over $[t,t+\delta]$ to find
	\begin{align}
	E(u_\eta(t+\delta), B_{\frac{r}{2}}(x)) &\leq C E(u_\eta(t+\delta),B_{\frac{r}{2}}(x); g_\eta(t+\delta) ) \leq C \int \phi^2 |d u_\eta(t+\delta)|_{g_\eta(t+\delta)}^2 dv_{g_0} \\ &\leq C (E(u_\eta(t),B_{r}(x);g_\eta(t)) + \frac{\delta}{r^2} ) \leq \tilde{C}( E(u_\eta(t),B_{r}(x)) + \frac{\delta}{r^2} )
	\end{align}
	which establishes the claim.
\end{proof}

This lemma allows to extend control of the energy concentration at a given time to nearby times. Thus, we can use Proposition \ref{prop:l2flowsmall} to establish $L^2$-closeness to a given smooth harmonic map flow.

\begin{lemma}
	\label{lem:shorttime}
	Take $N$ to be a smooth closed manifold and $M$ to be a smooth closed oriented surface of genus $\gamma \geq 2$ and $g_0 \in \mathcal{M}_{-1}$. Let $u_0 : M \to N$ be a smooth map. For $\eta>0$ denote by $(u_\eta(t),g_\eta(t))$  the solution to \eqref{eq:flow}. Then there exists a $0<\delta_0  = \delta_0(M,N,u_0,g_0)$ such that the conclusion of Proposition \ref{prop:l2flowsmall}  holds on the interval $[0,\delta_0]$: given $\eps>0$ we can find $\eta_\eps= \eta_\eps(M,N,u_0,g_0)>0$ such that for all $\eta < \eta_\eps$ we have
	\begin{equation}
		\norm{u(t)-u_\eta(t)}_{L^2(M,g_0)} < \eps.
	\end{equation}
\end{lemma}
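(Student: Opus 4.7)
The plan is to reduce this to Proposition \ref{prop:l2flowsmall} by verifying its hypotheses on a short time interval $[0,\delta_0]$ whose length depends only on $(M,N,u_0,g_0)$. Two ingredients are needed: smoothness of the underlying harmonic map flow $u(t)$ on $[0,\delta_0]$, and a uniform $L^4([0,\delta_0];W^{1,4}(M,g_0))$ bound on the maps $u_\eta$. The first follows from classical short-time existence for the harmonic map flow with smooth initial data, which yields some $T_{\mathrm{hm}}=T_{\mathrm{hm}}(M,N,u_0,g_0)>0$ on which $u(t)$ is smooth. For the second, Lemma \ref{lem:W14bound} reduces the task to showing that energy of $u_\eta$ does not concentrate on some fixed scale $R$ over the chosen interval.

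The key step is thus a short-time no-concentration estimate via Lemma \ref{lem:localenergybound}. First I fix outer parameters: take a fixed reference $T=1$ and let $\eta_0=\eta_0(T,g_0,E_0,N)$ be provided by Remark \ref{rmk:eta}, so that the metric comparability and the $C^0$ bound on $\pt g_\eta$ hold uniformly for $\eta \leq \eta_0$. This pins down the constant $C_1$ in the hypothesis of Lemma \ref{lem:localenergybound} and hence the constant $\tilde{C}=\tilde{C}(C_1,g_0,u_0)$ in its conclusion. Using smoothness of $u_0$ I next pick $R_0 < \inj(M,g_0)$, depending only on $(M,N,u_0,g_0)$, with
\begin{equation*}
E(u_0,B_{R_0}(x)) \leq \eps_0/(2\tilde{C}) \quad \text{for every } x \in M,
\end{equation*}
where $\eps_0$ is from Lemma \ref{lem:h2bound}. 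Applying Lemma \ref{lem:localenergybound} at $t=0$ with $r=R_0$ yields
\begin{equation*}
E\bigl(u_\eta(t),B_{R_0/2}(x)\bigr) \leq \eps_0/2 + \tilde{C}\,t/R_0^2,
\end{equation*}
which stays below $\eps_0$ as long as $t \leq R_0^2\eps_0/(2\tilde{C})$. Setting $\delta_0 := \min\bigl(T_{\mathrm{hm}},\,R_0^2\eps_0/(2\tilde{C})\bigr)$, all constants depend only on $(M,N,u_0,g_0)$, Lemma \ref{lem:W14bound} supplies the uniform $W^{1,4}$ bound on $[0,\delta_0]$, and Proposition \ref{prop:l2flowsmall} applied on this interval yields the required $L^2$ closeness with $\eta_\eps$ depending only on $\eps$ and the data.

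The substantive analytic machinery is already in hand; the main difficulty is bookkeeping, i.e.\ tracking the order of choices so that each constant depends only on the allowed data $(M,N,u_0,g_0)$ rather than on any auxiliary interval length. One subsidiary concern is that Proposition \ref{prop:l2flowsmall} and Lemma \ref{lem:W14bound} need $u_\eta$ to be smooth on $[0,\delta_0]$. Remark \ref{rmk:eta}(\ref{Claim:eta5}) already provides a weak solution up to the reference time $T=1$ for all $\eta\leq\eta_0$, and once the energy-concentration bound and the $W^{1,4}$ estimate are in place on $[0,\delta_0]$, standard parabolic bootstrapping (or direct appeal to the smooth short-time existence theory for \eqref{eq:flow} from \cite{rupflin_existence}) upgrades this to a smooth solution on $[0,\delta_0]$, closing the chain of reasoning.
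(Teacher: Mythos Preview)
Your proof is correct and follows essentially the same approach as the paper: both use Lemma \ref{lem:localenergybound} together with smoothness of $u_0$ to obtain a short-time no-concentration estimate at a fixed scale, then invoke Lemma \ref{lem:W14bound} to get the uniform $W^{1,4}$ bound needed for Proposition \ref{prop:l2flowsmall}. Your version is in fact slightly more explicit in isolating the role of the harmonic-map-flow short-time existence interval $T_{\mathrm{hm}}$ and in justifying smoothness of $u_\eta$ on $[0,\delta_0]$, points the paper leaves implicit.
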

\begin{proof}
 We may assume that $\eta \leq \eta_0$ so Remark \ref{rmk:eta} applies. Therefore by Lemma \ref{lem:localenergybound} we find, for all $x \in M$ and some $r_0$ to be chosen
	\begin{equation}
		E(u_\eta(\delta), B_{r_0}(x)) \leq \tilde{C}( E(u_0,B_{2r_0}(x)) + \frac{\delta}{2r_0^2}).
	\end{equation}
	The smoothness of $u_0$ implies that we can choose $r_0$ small enough such that $C E(u_0,B_{2r_0}(x)) < \frac{\eps_0}{2}$ for all $x \in M$, with $\eps_0$ as in Lemma \ref{lem:h2bound}. We then set $\delta_0 = \frac{r_0^2 \eps_0}{2\tilde{C}}$ to obtain $E(u_\eta(t),B_{r_0}(x)) < \eps_0$ for all $t \in [0, \delta_0]$. Thus $u_\eta(t)$ is smooth on $[0, \delta_0]$ (as no bubbles can develop, see \cite{rupflin_existence}), and therefore the assumptions of Proposition \ref{prop:l2flowsmall} are satisfied on $[0,\delta_0]$ by Lemma \ref{lem:W14bound}. 
\end{proof}

The important point is that the size of the interval $\delta_0$ only depends on the initial data. In the next section we will see how this allows us to set up an iteration argument (using the smoothness of the underlying harmonic map flow) to establish the closeness of $u$ and $u_\eta$ for small $\eta$ in our main theorem.

\subsection{Full Convergence of the Map}

So far we have established $L^2$-convergence of $u_\eta(t)$ to $u(t)$ for small times $t$. We first note that controlled concentration of energy together with standard parabolic regularity implies a priori bounds for $u_\eta(t)$: 
\begin{lemma}
	\label{lem:ckbounds}
		In the setting of Lemma \ref{lem:shorttime} we can choose $\tilde{\eta_\eps} \leq \eta_\eps$ such that additionally the  H\"older-norms (in space and time, and up to arbitrary order $C^{k}$) of $u(t)-u_\eta(t)$ on the interval $[0,\delta_0]$ stay bounded for $\eta \leq \tilde{\eta_\eps}$, with (uniform in $\eta$) bounds depending only on the initial data $(u_0,g_0)$, $T$, $k$, $M$ and $N$. In addition to the dependencies of $\eta_\eps$, $\tilde{\eta_\eps}$ may now also depend on $k$.
\end{lemma}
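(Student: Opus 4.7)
The plan is to upgrade the controlled energy concentration from the proof of Lemma \ref{lem:shorttime} to uniform higher-order estimates on $u_\eta$ via standard parabolic $\eps$-regularity, and then combine with the smoothness of $u$ to bound $w = u - u_\eta$. The key pieces of input from earlier in the paper are: (i) $E(u_\eta(t), B_{r_0}(x)) < \eps_0$ for all $x \in M$ and $t \in [0,\delta_0]$, uniformly in $\eta \leq \eta_0$ (from the proof of Lemma \ref{lem:shorttime}); (ii) Remark \ref{rmk:eta}, which controls $g_\eta(t)-g_0$ in arbitrary $C^k(M,g_0)$ and bounds $\partial_t g_\eta$ in $C^k$ uniformly (in fact by $C\eta^2$); and (iii) the smoothness of the limiting harmonic map flow $u(t)$ on $[0,T] \supset [0,\delta_0]$ assumed in Theorem \ref{thm:eta0}.

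First I would treat $u_\eta$ as a solution of the semilinear parabolic system
\begin{equation*}
\pddt u_\eta - \Delta_{g_0} u_\eta = (\Delta_{g_\eta} - \Delta_{g_0})u_\eta + A_{g_\eta}(u_\eta)(\nabla u_\eta, \nabla u_\eta),
\end{equation*}
whose leading-order operator has coefficients arbitrarily close (by Corollary \ref{cor:metriceta0}) to those of $\partial_t - \Delta_{g_0}$ once $\eta$ is small enough. The small-energy bound gives, by standard parabolic $\eps$-regularity (as in \cite{struweCMH,rupflin_existence} and Lemma \ref{lem:h2bound}), a uniform $L^\infty([\rho,\delta_0];W^{1,\infty}(M))$ bound on $u_\eta$ on any subinterval bounded away from $0$; near $t=0$ the same conclusion follows from the smoothness of $u_0$. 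With $\nabla u_\eta$ bounded in $L^\infty$, the nonlinearity $A_{g_\eta}(u_\eta)(\nabla u_\eta, \nabla u_\eta)$ is uniformly bounded, and classical parabolic Schauder theory gives a uniform $C^{1,\alpha}$ (parabolic) bound on $u_\eta$. Differentiating the equation and iterating this Schauder bootstrap produces uniform $C^{k,\alpha}$ bounds on $u_\eta$ on $[0,\delta_0]$, where at each step the additional terms generated by spatial and temporal derivatives of $g_\eta$ are controlled by Remark \ref{rmk:eta}, which in particular gives $\|\partial_t g_\eta\|_{C^k(M,g_0)} \leq C\eta^2$. The choice of $\tilde\eta_\eps$ shrinks with $k$ because the Schauder bootstrap at level $k$ requires the coefficients of $\Delta_{g_\eta}$ to be close to those of $\Delta_{g_0}$ in $C^{k-1,\alpha}$, which is supplied by Corollary \ref{cor:metriceta0} once $\eta$ is small enough.

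Finally, the parabolic $C^{k,\alpha}$ bound on $u_\eta$ uniform in $\eta \leq \tilde\eta_\eps$, combined with the smoothness of $u$ on $[0,T]$, yields the desired uniform parabolic $C^{k,\alpha}$ bound on $w = u - u_\eta$. The main obstacle is the bookkeeping in the bootstrap: each differentiation of the equation introduces commutator terms involving derivatives of $g_\eta$ and $\partial_t g_\eta$, and one must verify that these terms are tame enough — uniformly in $\eta$ — for the standard Schauder iteration to close. This is precisely where the smallness estimates of Remark \ref{rmk:eta} are used, and they are strong enough (the errors vanish with $\eta$) that the iteration goes through essentially as in the fixed-metric case, at the cost of shrinking $\tilde\eta_\eps$ with $k$.
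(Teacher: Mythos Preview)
Your proposal is correct and follows essentially the same approach as the paper: both arguments use the uniform no-concentration bound $E(u_\eta(t),B_{r_0}(x))<\eps_0$ from the proof of Lemma~\ref{lem:shorttime}, the uniform $C^k$-closeness of $g_\eta$ to $g_0$ from Remark~\ref{rmk:eta}/Corollary~\ref{cor:metriceta0}, and then standard parabolic $\eps$-regularity plus Schauder bootstrapping, together with the assumed smoothness of $u(t)$. The only difference is packaging---the paper invokes this regularity theory as a black box by citing \cite[Lemma~3.5, Remarks~3.6--3.7]{rupflin_existence} (after shrinking $\eta$ so that $\|g_0-g_\eta(t)\|_{H^s}\le\epsilon_1$), whereas you sketch out the content of that reference directly.
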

\begin{proof}
	We start with $\tilde{\eta_\eps} = \eta_\eps$. As before we see that $E(u_\eta(t),B_{r_0}(x)) < \eps_0$ for all $t \in [0, \delta_0]$. Hence the theory in \cite[Section 3]{rupflin_existence} applies as there is no concentration of energy up to time $\delta_0$. We now further choose $\tilde{\eta_\eps}$ small enough such that the estimate $\norm{g_0 - g_\eta(t)}_{H^s} \leq \epsilon_1$ holds, with $\epsilon_1= \epsilon_1(g_0,s)>0$ as defined in \cite[Equation (3.3)]{rupflin_existence}.
	The claim is then a direct consequence of \cite[Lemma 3.5, Remark 3.6, Remark 3.7]{rupflin_existence}, together with the smoothness of the harmonic map flow starting at $u_0$. 
\end{proof}
\begin{cor}
	\label{cor:higherordersmall}
	Under the assumptions of the previous lemma, for all $k \in \mathbb{N}$ we have $u_\eta(t) \to u(t)$ in $C^k$ as $\eta \downarrow 0$, uniformly for $t \in [0,\delta_0]$.
\end{cor}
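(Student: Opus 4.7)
The plan is to upgrade the uniform-in-time $L^2$-convergence $u_\eta(t) \to u(t)$ from Lemma \ref{lem:shorttime} to $C^k$-convergence, exploiting the uniform higher-order regularity of $u_\eta(t) - u(t)$ provided by Lemma \ref{lem:ckbounds} via a standard interpolation argument.

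Fix $k \in \mathbb{N}$ and $\eps > 0$. The first step is to invoke Lemma \ref{lem:ckbounds} at order $k+1$ to produce $\tilde{\eta}_1 > 0$ and a constant $K = K(k,u_0,g_0,M,N,T)$ so that
\begin{equation}
\norm{u_\eta(t) - u(t)}_{C^{k+1}(M,g_0)} \leq K
\end{equation}
holds uniformly for $\eta \leq \tilde{\eta}_1$ and $t \in [0,\delta_0]$. Next, I would apply a standard interpolation inequality on the compact surface $(M,g_0)$, of Gagliardo--Nirenberg type,
\begin{equation}
\norm{f}_{C^k(M,g_0)} \leq C \norm{f}_{L^2(M,g_0)}^\theta \norm{f}_{C^{k+1}(M,g_0)}^{1-\theta}
\end{equation}
for some $\theta = \theta(k,M) \in (0,1)$ and $C = C(k,M,g_0)$. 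Combined with the bound above, this yields, uniformly in $t \in [0,\delta_0]$,
\begin{equation}
\norm{u_\eta(t) - u(t)}_{C^k(M,g_0)} \leq C K^{1-\theta} \norm{u_\eta(t) - u(t)}_{L^2(M,g_0)}^\theta.
\end{equation}
Choosing $\tilde{\eta}_\eps \leq \tilde{\eta}_1$ small enough so that Lemma \ref{lem:shorttime} supplies $\norm{u_\eta(t) - u(t)}_{L^2(M,g_0)} < (\eps / (CK^{1-\theta}))^{1/\theta}$ uniformly in $t$ then completes the proof.

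An alternative approach I would keep in mind is a compactness argument via Arzel\`a--Ascoli: the space--time H\"older bounds on $u_\eta - u$ supplied by Lemma \ref{lem:ckbounds} render any sequence $(u_{\eta_j})$ with $\eta_j \downarrow 0$ precompact in $C^k(M\times[0,\delta_0])$, while the $L^2$-convergence uniquely identifies the limit as $u$, forcing the whole family $\{u_\eta\}$ to converge in $C^k$ uniformly in $t$.

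There is no substantive obstacle here: all the hard analysis has already been invested in the preceding lemmas. The only point requiring a moment's verification is that the interpolation inequality is applied uniformly in $t$, which is automatic since its constant depends only on $k$ and $(M,g_0)$, and both the $L^2$-factor and the $C^{k+1}$-factor on the right are controlled uniformly in $t \in [0,\delta_0]$.
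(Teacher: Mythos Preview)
Your proposal is correct and follows essentially the same route as the paper: combine the uniform higher-order bounds from Lemma~\ref{lem:ckbounds} with the $L^2$-convergence from Lemma~\ref{lem:shorttime} via interpolation. The paper phrases the interpolation as Ehrling's lemma (the additive form coming from the compact embeddings $C^{k+1}\hookrightarrow C^k\hookrightarrow L^2$), whereas you use a multiplicative Gagliardo--Nirenberg version; either yields the conclusion, and your Arzel\`a--Ascoli alternative is likewise valid.
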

\begin{proof}
	This follows by interpolation using the $C^k$-bounds provided by Lemma \ref{lem:ckbounds} and the $L^2$-convergence from Lemma \ref{lem:shorttime} (e.g. from Ehrling's Lemma).
\end{proof}

\begin{proof}[Proof of Theorem \ref{thm:eta0}]
	We already established the desired convergence of the metric, thus
	it remains to prove convergence of the map for all times.  
	By corollary \ref{cor:higherordersmall} the desired claim holds
	on $[0,\delta_0]$. In particular, we can choose
	$\eta_1=\eta_1(M,N,u_0,g_0)$ such that $\norm{u(t)-u_\eta(t)}_{W^{1,2}} <
	\frac{\eps_0}{4 \tilde{C}}$ for all $\eta \leq \eta_1$ and $t \in
	[0,\delta_0]$, where $\tilde{C}$ is the constant from Lemma
	\ref{lem:localenergybound}.
	
	As we assumed the harmonic map flow to be smooth up to time $T$,
	we can find $r_0$ such that $E(u(t),B_{2r_0}(x)) < \frac{\eps_0}{4
		\tilde{C}} $ for $t \in [0,T]$, $x \in M$. Hence,
	\begin{equation}
	E(u_\eta(\delta_0),B_{2r_0}(x)) \leq
	\norm{u(\delta_0)-u_\eta(\delta_0)}_{W^{1,2}} + \frac{\eps_0}{4\tilde{C}}
	\end{equation}
	and we find
	\begin{equation}
	E(u_\eta(\delta_0),B_{2r_0}(x)) <
	\frac{\eps_0}{2\tilde{C}}
	\end{equation}
	for all $\eta \leq \eta_1$.
	
	We can combine this estimate with Lemma \ref{lem:localenergybound}
	(as in the proof of Lemma \ref{lem:shorttime}) to find $\tilde{\delta_0} =
	\frac{\eps_0}{8\tilde{C}r_0^2}$ such that $E(u_\eta(t),B_{r_0}(x)) <
	\eps_0$ for $x \in M$, $t \in [\delta_0,\tilde{\delta_0}+\delta_0]$ . We
	can therefore extend Corollary \ref{cor:higherordersmall} up to time
	$\tilde{\delta_0}+\delta_0$. As the size of $\tilde{\delta_0}$ only depends
	on the initial data, we can iterate this process and obtain control on the energy density of $u_\eta(t)$ on the full time interval $[0,T]$, and hence also obtain Corollary \ref{cor:higherordersmall} on $[0,T]$.
\end{proof}
\section{A rescaled Limit}
\label{sec:rescaled}
We carry out the proof of Theorem \ref{thm:rescaled} in this section. To this end, we first establish estimates for the behaviour of the metric under the rescaled flow. We then compute the evolution of the $L^2$-norm of the tension, and as a consequence obtain that it becomes small for large $\kappa$. Leveraging this small tension we obtain uniform bounds on the rescaled flows as well as closeness to the (in our setting) unique harmonic map with respect to the metric at each time.

\subsection{Metric control on large time intervals for small $\eta$}

Analysing the flow \eqref{eq:flow2} on a time interval $[0,T]$ corresponds to studying the original flow \eqref{eq:flow} on $[0, \kappa T]$. In particular, for fixed $T$ we consider longer and longer time intervals as $\eta \to 0$, as opposed to the last section, thus local energy evolution is no longer controlled as in Lemma \ref{lem:localenergybound}.

Assuming that the injectivity radius along the flow stays controlled, techniques from \cite{RThorizontal} can be used to obtain uniform (in time) estimates on the metric.

By earlier metric estimates we find that at least for sufficiently small $T$ this injectivity radius control holds.

\begin{lemma}
	Let $M$ be a smooth oriented closed surface of genus $\gamma \geq 2$ and $N$ be a smooth closed Riemannian manifold. Assume that for some $T> 0$, $\kappa > 0$ we have a smooth solution $(u_\kappa(t),g_\kappa(t))$ to $\eqref{eq:flow2}$ on $[0,T]$ with rescaled coupling constant $\kappa$ for some given initial data $(u_0,g_0)$. Then there exists $T_0 = T_0(M,g_0,u_0) > 0$ (in particular \emph{independent} of $\kappa$) such that the injectivity radius $\inj_{g_\kappa}$ is  bounded away from 0 up to time $t=\min\{ T_0, T \}$.
	\label{lem:injectivity}
\end{lemma}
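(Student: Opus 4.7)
The strategy is to transfer the metric estimates from Section \ref{sec:etasmall} to the rescaled flow, exploiting that the $L^2$-length of a curve of metrics is invariant under time reparametrization while the rescaling from \eqref{eq:flow} to \eqref{eq:flow2} cancels exactly the $\eta$-factor in Lemma \ref{lem:l2small}, so that the analogue of \eqref{eq:L2small} yields a bound \emph{independent} of $\kappa$.

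Concretely, I would first derive the energy identity for \eqref{eq:flow2}. Either by direct computation, or by applying $\pt|_{\mathrm{new}} = \kappa\,\pt|_{\mathrm{old}}$ to \eqref{eq:energy-identity}, this gives
\begin{equation*}
\ddt E(u_\kappa(t), g_\kappa(t)) = -\int_M \left[ \kappa |\tau_{g_\kappa}(u_\kappa)|^2 + \tfrac{1}{4} |\mathrm{Re}(P_{g_\kappa}\Phi(u_\kappa,g_\kappa))|^2\right] dv_{g_\kappa},
\end{equation*}
the factor $\tfrac{1}{4}$ arising from $\kappa \cdot (\eta/4)^2 = \tfrac{1}{4}$. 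Discarding the tension term, using $\pt g_\kappa = \mathrm{Re}(P_{g_\kappa}\Phi)$ and integrating in time yields $\int_0^T \norm{\pt g_\kappa}_{L^2(M,g_\kappa(t))}^2 dt \leq 4 E_0$, and hence by H\"older
\begin{equation*}
L(g_\kappa, [0,T]) \leq 2\sqrt{T E_0},
\end{equation*}
which is uniform in $\kappa$.

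Next I would pick $T_0 = T_0(g_0,u_0)$ so that $2\sqrt{T_0 E_0} \leq \theta$, with $\theta$ the constant from Proposition \ref{prop:metricbounds} applied with $\eps = \tfrac{1}{2}\ell(g_0) > 0$ and some fixed $s > 3$. Since $g_\kappa$ is a horizontal curve by construction of $P_g$, Proposition \ref{prop:metricbounds} applies on $[0,\min\{T_0,T\}]$. Integrating the resulting $H^s$ estimate in time and using the continuous embedding $H^s \embed C^k$ for $s$ sufficiently large would then give a $\kappa$-independent bound $\norm{g_\kappa(t) - g_0}_{C^k(M,g_0)} \leq C\sqrt{t}$, exactly analogous to \eqref{eq:Ckbound}.

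To conclude, once $\norm{g_\kappa(t) - g_0}_{C^2(M,g_0)}$ is sufficiently small, which by the above holds for $t$ less than a constant depending only on $g_0$ and $u_0$, the length of the shortest closed geodesic of $g_\kappa(t)$ differs from that of $g_0$ by an arbitrarily small amount, and consequently $\inj_{g_\kappa(t)} \geq \tfrac{1}{2}\inj_{g_0}$, entirely analogous to Remark \ref{rmk:eta}(\ref{Claim:eta5}) for the unrescaled flow. Shrinking $T_0$ accordingly yields the claim. The only delicate step is the correct bookkeeping of $\kappa$-factors in the rescaled energy identity; once the $\kappa$-independent $L^2$-length bound is secured, the rest is a direct rerun of the arguments already developed for the small-$\eta$ flow.
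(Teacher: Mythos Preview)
Your proposal is correct and follows essentially the same approach as the paper: the paper simply transfers to the unrescaled flow on $[0,\kappa T]$ and invokes \eqref{eq:L2small} to get $L(g_\eta,[0,\kappa T]) \leq \eta\sqrt{\kappa T E_0} = 2\sqrt{T E_0}$, then appeals to \eqref{eq:Ckbound} and Remark~\ref{rmk:eta}(\ref{Claim:eta5}) to conclude, which is exactly your argument carried out in the original time variable rather than the rescaled one.
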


\begin{proof}

Analysing the corresponding unscaled flow $(u_\eta,g_\eta)$ on $[0,\kappa T]$, estimate \eqref{eq:L2small} yields 
\begin{equation}
	L(g_\eta,[0,\kappa T]) \leq \eta \sqrt{\kappa TE_0} = \sqrt{4T E_0}.
\end{equation}
We can thus choose $T_0$ by estimate \eqref{eq:Ckbound} 
such that $\inj_{g_\eta(t)} \geq \frac{1}{2} \inj_{g_0} > 0$ for $t \in [0,\kappa T_0]$ (cf. Remark \ref{rmk:eta}).
\end{proof}

\begin{rmk}
	The above claim could also be seen more geometrically by directly estimating the evolution of short closed geodesics along the flow using results in \cite{RTnonpositive}, see \cite{Hu} for such an approach.
	When the initial map is incompressible (\cite{RT}) the injectivity radius actually stays controlled uniformly in time, so restricting to small $T$ as above is not required. For targets with non-positive sectional curvature it is shown in \cite{RTnonpositive} that the injectivity radius cannot degenerate in finite time, however the relevant estimates unfortunately are not invariant under the rescaled time \eqref{eq:flow2}. 
\end{rmk}

Under such an injectivity radius bound for solutions of $\eqref{eq:flow}$, we can state the following $C^k$-estimate from \cite{RThorizontal} for horizontal curves (as defined in \ref{def:horizontal}).

\begin{lemma}[{Special case of \cite[Lemma 3.2]{RThorizontal}}]
	\label{lem:Ckmetric}
	Let $M$ be a smooth oriented closed surface of genus $\gamma \geq 2$. Let $\epsilon > 0 $ and consider  a horizontal curve of metrics  $g(t)$ on $M$ defined on the interval $[0,T]$, with uniformly bounded injectivity radius $\inj_{g(t)} \geq \epsilon$. Then there exists a $\delta > 0$, depending only on $\gamma$ and $\epsilon$, such that if the $L^2$-length (as defined in Lemma \ref{lem:l2small}) satisfies $L(g,[s,t]) < \delta$ for some $[s,t] \subset [0,T]$, then we have some $C_1 > 0$ only depending on $\gamma$ such that for any $t_1,t_2 \in [s,t]$ there holds
	
	\begin{equation}
	\label{eq:metricequiv1}
	g(t_1) \leq C_1 g(t_2).
	\end{equation}
	We further have some constant $C_2$ only depending on $M$, $\epsilon$ and $k$ such that 
	\begin{equation}
	|g(t_1) - g(t_2)|_{C^k(g(t_0))} (x) \leq C_2 L(g,[t_1,t_2]),
	\end{equation}
	for any $s \leq t_1 \leq t_2 \leq t$, $t_0 \in [s,t]$.
\end{lemma}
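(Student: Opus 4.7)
By definition of a horizontal curve, $\partial_t g(t) = \mathrm{Re}(\Psi(t))$ where $\Psi(t)$ is a holomorphic quadratic differential on $(M,g(t))$. The strategy is to upgrade the obvious $L^2$-control on $\partial_t g$ (coming from the definition of $L(g,\cdot)$) to pointwise $C^k$-control, using the rigidity of the finite-dimensional space of holomorphic quadratic differentials on a hyperbolic surface of fixed genus with a lower bound on injectivity radius, and then to integrate in time.

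The main technical step I would establish is the following pointwise estimate: if $\tilde g$ is any hyperbolic metric on $M$ with $\inj_{\tilde g} \geq \epsilon$ and $\Psi$ is a holomorphic quadratic differential on $(M,\tilde g)$, then
\begin{equation*}
\|\Psi\|_{C^k(M,\tilde g)} \leq C(\gamma, \epsilon, k)\, \|\Psi\|_{L^2(M,\tilde g)}.
\end{equation*}
Around each $x\in M$, select isothermal coordinates on a geodesic ball of radius $\epsilon/2$, write $\Psi = \psi(z)\,dz^2$ with $\psi$ holomorphic, and apply the sub-mean-value inequality together with Cauchy's estimates to bound $\psi$ and its derivatives at the origin by $\|\Psi\|_{L^2(B,\tilde g)}$. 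The injectivity radius bound controls the conformal factor relating such a coordinate disk to $\tilde g$ uniformly in the hyperbolic metric, so the resulting constant is independent of $\tilde g$.

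Given this pointwise estimate, integration in time yields, for any $t_1, t_2, t_0 \in [s,t]$,
\begin{equation*}
\|g(t_2) - g(t_1)\|_{C^k(M, g(t_0))} \leq \int_{t_1}^{t_2} \|\partial_t g(\tau)\|_{C^k(M,g(t_0))}\,d\tau \leq C \int_{t_1}^{t_2} \|\partial_t g(\tau)\|_{L^2(M,g(\tau))}\,d\tau = C\, L(g,[t_1,t_2]),
\end{equation*}
provided the metrics $g(\tau)$ for $\tau\in[s,t]$ are mutually equivalent, so that $C^k$-norms with respect to $g(\tau)$ and $g(t_0)$ can be interchanged up to a uniform multiplicative constant. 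The metric equivalence \eqref{eq:metricequiv1} itself is obtained by first applying the $k=0$ case of this argument: $\|g(t_1)-g(t_2)\|_{C^0(M,g(t_0))} \leq C L(g,[t_1,t_2]) \leq C\delta$, and choosing $\delta$ small (depending only on $\gamma$ and $\epsilon$) makes this pointwise difference small enough that $g(t_1) \leq C_1 g(t_2)$ with $C_1$ absolute.

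The main obstacle is the apparent circularity between the metric equivalence and the transfer of $C^k$ norms between the base metrics $g(\tau)$. This is resolved by a short bootstrap: the $C^0$ estimate with base metric $g(t_1)$ does not require any prior equivalence, so one first deduces from $L(g,[s,t])<\delta$ that all $g(\tau)$ on $[s,t]$ are pointwise comparable; with that in hand, switching base metric in the higher-order estimate costs only a uniform constant, and the full $C^k$ bound follows.
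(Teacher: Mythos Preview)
The paper does not supply its own proof of this lemma; it is quoted as a special case of \cite[Lemma 3.2]{RThorizontal} and used as a black box. Your sketch---bounding $\|\Psi\|_{C^k(M,\tilde g)}$ by $\|\Psi\|_{L^2(M,\tilde g)}$ via Cauchy estimates on isothermal balls of radius comparable to $\epsilon$, then integrating in time and running a continuity argument to obtain the metric comparability first and the higher $C^k$ bound second---is exactly the strategy carried out in \cite{RThorizontal}, so there is nothing to contrast.

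One small point of precision: your sentence ``the $C^0$ estimate with base metric $g(t_1)$ does not require any prior equivalence'' is not literally true, since even at the $C^0$ level you need to convert $|\partial_t g(\tau)|_{g(\tau)}$ into $|\partial_t g(\tau)|_{g(t_1)}$. What makes the bootstrap close is that at $\tau=t_1$ the comparison is trivially an equality, and a continuity argument (working on the maximal interval where, say, $\tfrac12 g(t_1)\le g(\tau)\le 2g(t_1)$) then shows this interval is all of $[s,t]$ once $\delta$ is small enough. You clearly have this in mind, but the phrasing slightly understates the role of the continuity step. Note also that the stated dependence ``$C_1$ depends only on $\gamma$'' (not on $\epsilon$) comes out because any $\epsilon$-dependence can be absorbed into the choice of $\delta=\delta(\gamma,\epsilon)$.
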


\begin{rmk}
	Note that as opposed to Proposition \ref{prop:metricbounds} cited in the last section, the constant $C_2$ here does not depend on the initial metric of the considered horizontal curve, but we do now additionally require uniform control of the injectivity radius along the whole curve $g(t)$.
\end{rmk}

We can now apply this lemma along horizontal curves $g(t)$ arising from solutions to \eqref{eq:flow2}. We first show that the metrics (assuming an injectivity radius bound) stay equivalent. 
\begin{cor}
	\label{cor:metricequiv}
	As usual, take $M$ to be a smooth closed oriented surface of genus $\gamma \geq 2$ and $N$ to be a smooth closed Riemannian manifold. Let $T > 0$, $\kappa > 0$ and consider a smooth solution $(u_\kappa(t),g_\kappa(t))$ to \eqref{eq:flow2} on the time interval $[0,T]$ with rescaled coupling constant $\kappa$ starting at the initial data $(u_0,g_0)$. Assume that there exists $\delta > 0$ such that $\inj_{g_\kappa(t)} \geq \delta >0$ for $t \in [0,T]$. Then we can find a constant $C > 0$, only depending on $\delta$, $M$, $E(u_0,g_0)$ and $T$ such that for any $s,t \in [0,T]$ we have
	\begin{equation}
	g_\kappa(s) \leq C g_\kappa(t).
	\end{equation}
\end{cor}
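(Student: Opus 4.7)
The goal is to upgrade the local metric equivalence from Lemma \ref{lem:Ckmetric} to a global one on the full interval $[0,T]$, with a constant independent of $\kappa$. The key observation is that although the rescaled flow \eqref{eq:flow2} evolves the metric at $O(1)$ speed (so we no longer expect $g_\kappa(t)$ to be $C^k$-close to $g_0$ uniformly), the total $L^2$-length of $g_\kappa$ on $[0,T]$ is still controlled uniformly in $\kappa$.

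The plan is as follows. First I would verify that $g_\kappa$ is a horizontal curve in the sense of Definition \ref{def:horizontal}: by construction $\partial_t g_\kappa = Re(P_{g_\kappa}\Phi(u_\kappa,g_\kappa))$, and $P_{g_\kappa}\Phi(u_\kappa,g_\kappa)$ is a holomorphic quadratic differential on $(M,g_\kappa(t))$. Next I would compute the total $L^2$-length. Undoing the time rescaling $s = \kappa t$ and comparing with the unrescaled flow as in the proof of Lemma \ref{lem:injectivity}, one obtains
\begin{equation}
L(g_\kappa,[0,T]) = L(g_\eta,[0,\kappa T]) \leq \eta \sqrt{\kappa T E_0} = 2\sqrt{T E_0},
\end{equation}
by Lemma \ref{lem:l2small} (using $\eta = 2/\sqrt{\kappa}$). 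Thus the $L^2$-length is bounded by a constant depending only on $T$ and $E_0$, uniformly in $\kappa$.

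With this uniform length bound in hand, let $\delta_L = \delta_L(\gamma,\delta) > 0$ denote the threshold from Lemma \ref{lem:Ckmetric} corresponding to our injectivity radius lower bound $\delta$. Partition $[0,T]$ into $N$ consecutive subintervals $[t_{i-1},t_i]$ on each of which $L(g_\kappa,[t_{i-1},t_i]) < \delta_L$; by the length bound we may take $N \leq \lceil 2\sqrt{TE_0}/\delta_L \rceil + 1$, which depends only on $T$, $E_0$, and $\delta$. Lemma \ref{lem:Ckmetric} applied on each piece yields $g_\kappa(t') \leq C_1 g_\kappa(t'')$ whenever $t',t''$ lie in the same subinterval, with $C_1$ depending only on $\gamma$. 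Chaining these inequalities across consecutive subintervals, for arbitrary $s,t \in [0,T]$ we get $g_\kappa(s) \leq C_1^N g_\kappa(t)$, which is the desired estimate with $C := C_1^N$ depending only on $\delta$, $M$, $E_0$ and $T$.

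There is no serious obstacle here: the argument is essentially a pigeonholing of the uniform $L^2$-length bound into finitely many ``short'' subintervals, followed by iteration of the short-interval estimate of Lemma \ref{lem:Ckmetric}. The only point that requires a little care is that the constant $C_1$ in \eqref{eq:metricequiv1} genuinely depends only on $\gamma$ (not on the specific curve or its initial metric), so that iterating it $N$ times does not introduce any hidden dependence on $\kappa$.
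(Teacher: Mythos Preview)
Your proof is correct and matches the paper's approach exactly: bound the $L^2$-length of $g_\kappa$ on $[0,T]$ uniformly in $\kappa$ via Lemma~\ref{lem:l2small} (after undoing the rescaling), then partition $[0,T]$ into finitely many subintervals of small $L^2$-length and iterate the metric equivalence \eqref{eq:metricequiv1} from Lemma~\ref{lem:Ckmetric}. The paper's proof is simply more terse (``repeated application of Lemma~\ref{lem:Ckmetric} on subintervals''), but you have spelled out precisely that argument.
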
 
\begin{proof}
	By Lemma \ref{lem:l2small}, we obtain a bound of the form
	\begin{equation}
	L(g_\kappa,[s,t]) \leq \sqrt{(t-s)E(u_0,g_0)}.
	\end{equation}
	Repeated application of Lemma \ref{lem:Ckmetric} on subintervals now yields the claim.

\end{proof}

Hence the metric along solutions of the flow \eqref{eq:flow2} stays uniformly equivalent to e.g. $g(0) = g_0$, as long as the injectivity radius is controlled, in particular up to the time $T_0$ from Lemma \ref{lem:injectivity}. We now observe that certain norms defined with respect to the changing metric also stay controlled. This problem was already considered in \cite[Section 3]{RThorizontal}, and the same methods directly apply in our situation.

Let $k$ be a non-negative integer here. Recall that we defined the $C^k(M,g)$-norm of (in particular) tensors $h \in \Sym^2(T^*M)$ via
\begin{equation}
\norm{h}_{C^k(M,g)} :=   \sup_{x \in M} \sum^k_{l=0} |\nabla^l h|_{g} (x),
\end{equation}
where $\nabla$ refers to the Levi-Civita connection on $(M,g)$ and its extensions.
Note that the same definition also extends to maps $u: M \to N \hookrightarrow \mathbb{R}^n$.

\begin{lemma}[Content from Section 3 of \cite{RThorizontal}] 
	\label{lem:ckequiv}
	Let $T>0, \kappa > 0$ and take $M$, $N$ and $(u_\kappa,g_\kappa)$ as in the above lemma. Assume again that there is some $\epsilon >0$ such that $\inj_{g_\kappa} \geq \epsilon$ on $[0,T]$. Then the $C^k(M,g_\kappa(t))$ norms as defined above are uniformly equivalent on $[0,T]$, in the sense that there exists some $C = C(k,\epsilon,M,E(u_0,g_0),T)> 0$ such that for any $t_1,t_2 \in [0,T]$ we have
	\begin{align*}
	\norm{h}_{C^k(M,g_\kappa(t_1))} \leq C  \norm{h}_{C^k(M,g_\kappa(t_2))}. 
	\end{align*}
	Note that $h$ here can be either a tensor or a map $h:M \to N \hookrightarrow \mathbb{R}^n$.
\end{lemma}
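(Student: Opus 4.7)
The natural approach is induction on $k$. For $k=0$, the claim is simply the pointwise equivalence of the bilinear forms $g_\kappa(t_1)$ and $g_\kappa(t_2)$, which follows from two applications of Corollary \ref{cor:metricequiv}: these pointwise bounds translate directly into equivalence of $|h|_{g_\kappa(t_1)}$ and $|h|_{g_\kappa(t_2)}$ for any tensor $h$, and similarly for $|du|_{g_\kappa(t_i)}$ in the case of a map $u: M \to N \hookrightarrow \mathbb{R}^n$ (where only the domain metric enters).

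Before the induction, I would collect uniform bounds on the difference of the two metrics. Combining the $L^2$-length estimate of Lemma \ref{lem:l2small}, namely $L(g_\kappa,[0,T]) \leq \sqrt{T E(u_0,g_0)}$, with Lemma \ref{lem:Ckmetric} (applied iteratively on a partition of $[0,T]$ chosen so that each subinterval has $L^2$-length below the threshold $\delta$ of that lemma), one obtains for any integer $m$ and any fixed reference time $t_0$ a uniform estimate
\begin{equation}
\norm{g_\kappa(t_1) - g_\kappa(t_2)}_{C^m(M, g_\kappa(t_0))} \leq C(m, \epsilon, M, E(u_0,g_0), T).
\end{equation}
Using the $k=0$ case already established, these bounds transfer (with different constants of the same type) to $C^m$-norms taken with respect to $g_\kappa(t_1)$ or $g_\kappa(t_2)$ themselves.

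For the inductive step, the key identity is that the difference of the two Levi-Civita connections $A := \nabla^{g_\kappa(t_1)} - \nabla^{g_\kappa(t_2)}$ is a genuine $(1,2)$-tensor on $M$ given schematically by
\begin{equation}
A \sim g_\kappa(t_1)^{-1} \ast \nabla^{g_\kappa(t_2)}\bigl(g_\kappa(t_1) - g_\kappa(t_2)\bigr),
\end{equation}
and hence is uniformly bounded in every $C^m$-norm by the preceding paragraph. Iterating the relation between the two connections one obtains, schematically,
\begin{equation}
\nabla^k_{g_\kappa(t_1)} h = \nabla^k_{g_\kappa(t_2)} h + \sum_{j=0}^{k-1} P_{k,j}(A, \nabla A, \ldots, \nabla^{k-1-j} A) \ast \nabla^j_{g_\kappa(t_2)} h,
\end{equation}
where each $P_{k,j}$ is a universal polynomial expression in $A$ and its covariant derivatives. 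Pointwise estimation using the inductive hypothesis on the lower-order terms $\nabla^j_{g_\kappa(t_2)} h$ for $j \leq k-1$, together with the uniform bounds on $A$, yields the desired $C^k$-equivalence.

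The main bookkeeping obstacle is keeping track of the case $h: M \to N \hookrightarrow \mathbb{R}^n$. Here $\nabla^l h$ is a covector-valued tensor on $M$ with values in the (flat) ambient $\mathbb{R}^n$, but since the target contributes no connection the exact same schematic expansion applies, with $A$ acting only on the $M$-indices. This is precisely the content carried out in Section 3 of \cite{RThorizontal}, and once Corollary \ref{cor:metricequiv} and the uniform metric difference bounds above are in hand, the arguments there transfer verbatim to our rescaled setting.
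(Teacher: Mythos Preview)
Your approach is correct and aligns with the paper's: both refer the substance of the argument to Section 3 of \cite{RThorizontal}, and you simply make more of that content explicit via the induction on $k$ and the difference tensor $A$. The paper's one-line proof additionally invokes the pointwise bound $|\partial_t g_\kappa|_{C^k(g(t))}$ (available since $\partial_t g_\kappa$ is the real part of a holomorphic quadratic differential and is thus controlled in every $C^k$ by its $L^1$-norm and the injectivity radius, cf.\ \cite[Lemma A.9]{RTZ}); you route through Lemma \ref{lem:Ckmetric} instead, which is essentially the integrated form of that same estimate, so there is no real difference in strategy.

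One small wrinkle: the claim that the $C^m$-bounds on the metric difference ``transfer \ldots using the $k=0$ case already established'' is not correct as written, since the $k=0$ equivalence only compares pointwise tensor norms, not connections, and so cannot by itself move a $C^m$-norm from $g_\kappa(t_0)$ to $g_\kappa(t_1)$. The fix is immediate: run your induction first on each subinterval of the partition (where Lemma \ref{lem:Ckmetric} lets you take $t_0 = t_1$ or $t_0 = t_2$ directly, so no transfer is required), and then chain the resulting equivalences across the finitely many subintervals to get the global statement. This is presumably what you intended by ``applied iteratively''.
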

\begin{proof}
	This can be seen as in the proof of \cite[Lemma 3.2]{RThorizontal}, combined with standard estimates for $|\pt g_\kappa(t)|_{C^k(M,g(t))}$ (e.g. \cite[Lemma A.9]{RTZ}). 

\end{proof}

\subsection{Evolution of the tension}

A quantity that proved very important in the study of the harmonic map flow is the (squared) $L^2$-norm of the tension $$\Tau(u,g) = \norm{\tau_{g}(u)}^2_{L^2(M,g)} = \int_M |\tau_g(u)|^2.$$ For the classical harmonic map flow into nonpositively curved targets this turns out to be monotonically decreasing in time. This monotonicity can be seen by computing the second variation of the energy along solutions of the harmonic map flow, which is a well-known calculation (see e.g. \cite{ES}).
We show that for solutions to \eqref{eq:flow} (assuming the metric does not degenerate) with a target of nonpositive curvature we have a bound on how fast the tension can increase instead, which improves for small $\eta$. Note that the curvature hypothesis on $N$ means that any smooth initial data $(u_0,g_0)$ together with a choice of $\eta$ now leads to a smooth solution to \eqref{eq:flow} that exists for all times $t$ (see \cite{RTnonpositive}).

\begin{lemma}
	\label{lem:tauderiv}
	Assume $M$ as usual to be a smooth closed oriented surface of genus $\gamma \geq 2$ and $(N,G)$ to be a smooth closed Riemannian manifold, which we now also assume to have nonpositive curvature.
	Let $\eta > 0$ and take $(u(t),g(t))$ to be the (smooth) solution to \eqref{eq:flow} with coupling constant $\eta$, starting at initial data $(u_0,g_0)$. Assume that there exists some $\delta > 0$ such that $\inj_{g(t)} \geq \delta > 0$, and denote as usual $E_0 = E(u(0),g(0))$, then
	\begin{equation}
	\ddt \Tau(t) \leq CE_0^3 \delta^{-2} \eta^4
	\end{equation}
	where $\Tau(t) := \Tau(u(t),g(t))$ and $C < \infty$ only depends on the genus $\gamma$ of $M$.
\end{lemma}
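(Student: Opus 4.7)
The plan is to compute $\ddt\Tau(t)$ via the chain rule, splitting into the contribution from evolving the map at fixed metric and the contribution from evolving the metric at fixed map. Setting $h := \mathrm{Re}(P_g(\Phi))$, so that $\pt g = \tfrac{\eta^2}{4}h$, I would use that $h$ is transverse-traceless (both $\mathrm{tr}_g h = 0$ and $\mathrm{div}_g h = 0$) as the real part of a holomorphic quadratic differential; in particular $dv_g$ is preserved along the flow, as already noted in the paper.

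For the map part, the classical Bochner-type computation (as in Eells-Sampson) along $\pt u = \tau_g(u)$ with $g$ held fixed yields
\begin{equation*}
\left.\frac{d}{dt}\right|_g \Tau(t) = -2\int_M |\nabla\tau|_g^2 \,dv_g + 2\int_M R^N(du,\tau,du,\tau)\,dv_g,
\end{equation*}
and the nonpositive sectional curvature hypothesis forces the curvature term to be non-positive, so this contribution is bounded above by $-2\|\nabla\tau\|^2_{L^2(M,g)}$. For the metric part at fixed $u$, a direct variational computation gives
\begin{equation*}
\left.\pddt\right|_u \tau_g(u) = -\tfrac{\eta^2}{4}\, h^{ij}(\nabla du)_{ij},
\end{equation*}
where the contribution from the variation of the Christoffel symbols drops out precisely because $h$ is trace-free \emph{and} divergence-free. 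Combined with $\pt dv_g = 0$, the metric part of $\ddt\Tau$ equals $-\tfrac{\eta^2}{2}\int_M \langle \tau,\, h^{ij}(\nabla du)_{ij}\rangle\,dv_g$.

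Next I would integrate by parts, using $\mathrm{div}_g h = 0$, to transfer a derivative from $\nabla du$ onto $\tau$, producing an expression of the form $\tfrac{\eta^2}{2}\int_M h^{ij}\langle \nabla_i\tau,\, \partial_j u\rangle\, dv_g$. Cauchy-Schwarz together with $\|du\|^2_{L^2} = 2E(u,g) \leq 2E_0$ bounds this in absolute value by $C\eta^2 \|\nabla\tau\|_{L^2}\|h\|_{L^\infty}E_0^{1/2}$, and Young's inequality yields the bound $\|\nabla\tau\|^2_{L^2} + C\eta^4\|h\|^2_{L^\infty} E_0$. The $\|\nabla\tau\|^2_{L^2}$ term is then absorbed by the negative map contribution, leaving $\ddt\Tau \leq C\eta^4 \|h\|^2_{L^\infty} E_0$. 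Finally, since $h$ is the real part of a holomorphic quadratic differential on the hyperbolic surface $(M,g(t))$ with $\|\Phi\|_{L^1(M,g(t))} \leq CE_0$, an injectivity-radius-uniform $L^\infty$ estimate for holomorphic quadratic differentials (in the spirit of Lemma \ref{lem:uniformPg}, but with constant depending only on $\gamma$ and $\delta$) delivers $\|h\|_{L^\infty} \leq C\delta^{-1}E_0$, and substituting produces the claimed bound.

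The main technical obstacle is establishing the injectivity-radius-uniform $L^\infty$ bound on $h$ with the correct $\delta$-dependence, since Lemma \ref{lem:uniformPg} as stated requires the metric to lie in a neighborhood of a fixed reference metric. Once that estimate is in hand, the remainder is essentially the standard Eells-Sampson identity combined with an integration by parts exploiting the transverse-traceless structure of $h$ and a Young's-inequality absorption against the always-negative map contribution; tracking $\eta$-powers, one $\eta^2$ comes from $\pt g$ and the second from using Young's rather than Cauchy-Schwarz alone.
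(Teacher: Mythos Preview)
Your proposal is correct and follows essentially the same route as the paper's proof: both arrive at the identity
\[
\ddt\Tau = 2\int_M h^{ij}\langle \nabla_i u,\nabla_j\tau\rangle - 2\int_M|\nabla\tau|^2 + 2\int_M \langle R^N(du(e_i),\tau)\tau,du(e_i)\rangle,
\]
drop the curvature term by sign, and absorb the first term via Young's inequality and the $L^\infty$ bound on $h$. Your chain-rule splitting (map part at fixed $g$ versus metric part at fixed $u$) is just a repackaging of the paper's direct computation with the $\nabla_t$-formalism; the cancellation of the traced Christoffel variation using $\mathrm{tr}_g h=0$ and $\mathrm{div}_g h=0$ is the same in both.

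One small point: to get $\|h\|_{L^\infty}\leq C\delta^{-1}E_0$ you need not only the $L^\infty$--$L^1$ estimate for holomorphic quadratic differentials but also that $P_g$ is bounded $L^1\to L^1$ with constant depending only on $\gamma$, since $h=\mathrm{Re}(P_g(\Phi))$ and it is $\|P_g(\Phi)\|_{L^1}$, not $\|\Phi\|_{L^1}$, that feeds into the $L^\infty$ bound. The paper invokes this explicitly (it is not automatic from $P_g$ being an $L^2$-orthogonal projection); you should cite it as well.
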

\begin{proof}
	
	In the following we use the formalism of the induced covariant derivative $\nabla_t$ on $M \times [0,T]$, which agrees with $\pddt$ for time-dependent functions, as explained in e.g. \cite[p. 86ff]{Reto}. Following the same source, we also write $\nabla$ for the induced connection on the pullback bundles (i.e. $\nabla u$ is viewed as an element of $T^{*}M \tensor u^*(TN)$ etc.). Finally we consider $(N,G) \hookrightarrow \mathbb{R}^n$ to be isometrically embedded.
	We have
	$$\Tau(u,g) = \int_M g^{ij}\nabla_i \nabla_j u^k g^{mn} \nabla_m \nabla_n u^k dv_g.$$
	Hence
	\begin{equation}
	\label{eq:Tau3}
	\ddt \Tau = 2 \int_M \nabla_t ( g^{ij}\nabla_i \nabla_j u^k )g^{mn} \nabla_m \nabla_n u^k dv_g,
	\end{equation}
	where we used that the flow \eqref{eq:flow} leaves the induced volume form invariant (\cite{RT}), and thus no additional term involving the metric appears upon differentiating the integral.
	
	From now on we suppress the volume form as well as the superscript on the map, and write $h := \pddt g = \frac{\eta^2}{4}Re(P_g(\Phi(u,g)))$. Evaluating the first factor yields
	\begin{equation}
	\label{eq:Tau2}
	\nabla_t( g^{ij}\nabla_i \nabla_j u ) = - h^{ij} \nabla_i \nabla_j u  + g^{ij} \nabla_t \nabla_i \nabla_j u.
	\end{equation}
	We switch derivatives in the second term and obtain (for a derivation, see e.g. \cite[A.14, p. 86ff]{Reto})
	\begin{equation}
	g^{ij} \nabla_t \nabla_i \nabla_j u = g^{ij} \left( \nabla_i \nabla_j \pddt u + \Rm^{N}( \pddt u, \nabla_i u) \nabla_j u - (\pddt \Gamma^k_{ij}) \nabla_k u \right).
	\end{equation}
	From standard formulas we have the evolution of the Christoffel symbols given by
	\begin{equation}
	\pddt \Gamma^k_{ij} = \frac{1}{2} g^{kq} ( \nabla_i h_{jq} + \nabla_j h_{iq} - \nabla_q h_{ij} ) .
	\end{equation}
	Note that after tracing this with the metric $g$ (in $i,j$) it vanishes as $\delta h = \mathrm{tr} h = 0$ as holomorphic quadratic differentials are trace-free and divergence-free (\cite{HRT}).
	We now simplify \eqref{eq:Tau2} further, using the convention of repeated indices denoting traces (as we carried out the time derivatives now):
	\begin{equation}
	\nabla_t( g^{ij}\nabla_i \nabla_j u ) = - h^{ij} \nabla_i \nabla_j u +  \nabla_i \nabla_i \nabla_j \nabla_j u + \Rm^{N}( \nabla_k \nabla_k u, \nabla_i u) \nabla_i u.
	\end{equation}
	Putting this back into \eqref{eq:Tau3} we obtain
	\begin{equation}
	\ddt \Tau = 2 \int_M  \left(- h^{ij} \nabla_i \nabla_j u^k +  \nabla_i \nabla_i \nabla_j \nabla_j u^k + (\Rm^{N}( \nabla_p \nabla_p u, \nabla_i u) \nabla_i u )^k \right)\nabla_m \nabla_m u^k.
	\end{equation}
	We can view this integral as a sum of $L^2$-inner products (in the bundle $u^*(TN)$). Integrating the first (using $\delta h = 0$) and second term by parts yields
	\begin{multline}
	\ddt \Tau = 2 \int_M  h^{ij} \nabla_j u^k  \nabla_i \nabla_m \nabla_m u^k -  2 \int_M \nabla_i \nabla_j \nabla_j u^k \nabla_i \nabla_m  \nabla_m u^k + \\ 2 \int_M \langle \Rm^{N}( \tau_g(u), du(e_i)) du(e_i), \tau_g(u)) \rangle.
	\end{multline}	
	Using inner product notation we finally arrive at
	\begin{multline}
	\label{eq:Tau1}
	\ddt \Tau = 2 \int_M  \langle h_{ij} , \langle \nabla_i  u,  \nabla_j \tau_g(u) \rangle_{u^*(TN)} \rangle -  2 \int_M \langle \nabla \tau_g(u),  \nabla \tau_g(u) \rangle + \\ 2 \int_M \langle \Rm^{N}( du(e_i), \tau_g(u)) \tau_g(u), du(e_i) \rangle.
	\end{multline}	
	
	We now proceed to estimate the first term in \eqref{eq:Tau1}. Recall that given a lower bound $\delta$ on $\inj_g$ we can estimate
	\begin{equation}
	\norm{Re(\theta)}_{L^\infty(M,g)} \leq \norm{ \theta }_{L^\infty(M,g)} \leq C \delta^{-1} \norm{ \theta }_{L^1(M,g)}
	\end{equation}
	for any holomorphic quadratic differential $\theta$ with a constant $C < \infty$  depending only on $\gamma$ (\cite[Section 2]{RThorizontal}). We further know from \cite[Proposition 4.10]{RTnonpositive} that $P_g$ is a bounded operator from $L^1$ to $L^1$, i.e. 
	\begin{equation}
	\norm{ P_g(\phi) }_{L^1(M,g)} \leq C \norm{ \phi }_{L^1(M,g)}
	\end{equation}
	for any quadratic differential $\phi$ where $C < \infty$ again only depends on $\gamma$. Together with the uniform bound $\norm{ \Phi(u,g) }_{L^1(M,g)} \leq CE(u(t),g(t)) \leq CE_0$  we see that
	\begin{equation}
	\norm{Re(P_{g(t)}(\Phi(u(t),g(t))))}_{L^\infty(M,g)} \leq C \delta^{-1} \norm{ P_{g(t)}(\Phi(u(t),g(t)) }_{L^1(M,g)} \leq C \delta^{-1} E_0.
	\end{equation}
	Using this we estimate the first integral in \eqref{eq:Tau1} as
	\begin{align}
	&\left|\int_M \langle \frac{\eta^2}{4} Re(P_{g(t)}(\Phi(u(t),g(t)))_{ij} , \langle \nabla_i \tau_{g(t)}(u(t)), du(e_j)  \rangle \rangle \right| \\
	\leq& \int_M C \delta^{-1}  E_0 \frac{\eta^2}{4} |\langle \nabla \tau_{g(t)}(u(t)), du  \rangle| \\
	\leq& \int_M |\nabla \tau_{g(t)}|^2 + C \delta^{-2}  E_0^2 \eta^4 \int_M |du|^2 \\
	\leq& \int_M |\nabla \tau_{g(t)}|^2 + C \delta^{-2} E_0^3 \eta^4.
	\end{align}
	Here we used Young's inequality in the second inequality. From \eqref{eq:Tau1}, using the nonpositive sectional curvature of the target, we therefore obtain the claim
	\begin{equation}
	\label{eq:Tau4}
	\ddt \Tau(t) \leq CE_0^3 \delta^{-2} \eta^4. 
	\end{equation}
	
\end{proof}

We use this bound to see that for large $\kappa$ the flow \eqref{eq:flow2} very quickly has small tension. Note that the limiting $\eta = 0$ (or $\kappa = \infty$) case corresponds to the classical harmonic map flow, which satisfies $\Tau(t) \to 0$ as $t \to \infty$. Our estimate can be considered a quantitative version of this statement, allowing the metric to move slightly.
\begin{cor}
	\label{lem:odetau}
	With $M$, $N$, $(u(t),g(t))$ and $\delta$ as in the previous lemma, we have for any $\eps > 0$, and $\kappa = \frac{4}{\eta^2} \geq 1$
	\begin{equation}
	\Tau(t) \leq C(\eps) \kappa^{-1} \: \: , \: t \geq \eps \kappa,
	\end{equation}
	where $C(\eps) \to \infty$ for $\eps \to 0$ and $C(\eps)$ also depends on $E_0$, in addition to $\delta$ and the genus $\gamma$ of $M$.
	After carrying out the rescaling $\bar{t} = \frac{1}{\kappa}t$, this gives $\Tau(\bar{t}) \leq C(\eps)\kappa^{-1}$ for $\bar{t} > \eps$.
\end{cor}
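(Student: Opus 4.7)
The plan is to combine the one-sided differential inequality from Lemma \ref{lem:tauderiv}, namely $\ddt \Tau(t) \leq C_1 \eta^4$ with $C_1 := C E_0^3 \delta^{-2}$, with the integrated form of the energy identity \eqref{eq:energy-identity}, which yields $\int_0^{t} \Tau(s)\,ds \leq E_0$ for every $t > 0$. The first estimate controls how fast $\Tau$ can grow, while the second forces it to be on average small; together they pin $\Tau$ down pointwise, in the familiar way that a nonnegative function with a one-sided derivative bound and finite $L^1$-mass must be small at late times.

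Concretely, I fix $t_0 \geq \eps\kappa$ and set $A := \Tau(t_0)$. Integrating the differential inequality backwards gives $\Tau(s) \geq A - C_1 \eta^4 (t_0 - s)$ for every $s \leq t_0$. Choosing an interval $[t_0 - L, t_0]$ with $L \leq t_0$ and $C_1 \eta^4 L \leq A$ (so the lower bound stays nonnegative on the interval), and combining with the energy identity, I obtain
\begin{equation}
E_0 \;\geq\; \int_{t_0 - L}^{t_0} \Tau(s)\,ds \;\geq\; AL - \tfrac{1}{2}\, C_1 \eta^4 L^2,
\end{equation}
which rearranges to $A \leq E_0/L + \tfrac{1}{2}\, C_1 \eta^4 L$ for any such admissible $L$.

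It then remains to optimise over $L$. The unconstrained minimiser is $L^{*} = \eta^{-2}\sqrt{2E_0/C_1}$, giving $A \leq \sqrt{2 E_0 C_1}\,\eta^2 = O(\kappa^{-1})$ provided $L^{*} \leq t_0$, which is automatic once $\eps$ exceeds a threshold depending only on $E_0,\delta,\gamma$ (since $t_0 \geq \eps\kappa = 4\eps/\eta^2$ while $L^{*}$ sits on the same $\eta^{-2}$ scale). For smaller $\eps$ I fall back to $L = t_0$, obtaining $A \leq E_0/(\eps\kappa) + \tfrac{1}{2}\, C_1 \eta^4 t_0$, where the second term is still $O(\kappa^{-1})$ because $t_0 \leq L^{*}$ in this regime. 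The admissibility constraint $C_1 \eta^4 L \leq A$ causes no trouble: if it fails, then already $A < \sqrt{2 E_0 C_1}\,\eta^2 = O(\kappa^{-1})$ and there is nothing to prove. In all cases $\Tau(t_0) \leq C(\eps)\kappa^{-1}$ with $C(\eps) = O(1/\eps)$ as $\eps \downarrow 0$, and the final rescaling statement follows by relabelling time.

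The only mildly subtle point is the case split for small $\eps$: the optimizer $L^{*}$ and the lower endpoint $\eps\kappa$ live on the same $1/\eta^2$ scale, so $L^{*}$ may exceed $t_0$ and one must truncate the integration window to $L=t_0$; this truncation is precisely the mechanism producing the $1/\eps$ degeneration of $C(\eps)$. Apart from this bookkeeping, the argument is routine.
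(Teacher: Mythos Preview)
Your approach is essentially the same as the paper's: combine the one-sided derivative bound $\ddt\Tau \leq C_1\eta^4$ with the $L^1$-bound $\int_0^\infty \Tau \leq E_0$ by integrating the linear lower bound for $\Tau$ over a window to the left of $t_0$ and rearranging. The paper simply fixes the window length to be $h=\eps\kappa$ from the outset (handling the case $\Tau(t_0)\leq C_1\eta^4 h$ separately), which avoids your optimisation over $L$ and the attendant case split entirely, and lands directly on $\Tau(t_0)\leq E_0/(\eps\kappa)+\tfrac12 C_1\eta^4(\eps\kappa)=\kappa^{-1}(E_0\eps^{-1}+C\eps)$.

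One small imprecision in your write-up: your case split is phrased in terms of~$\eps$, but the relevant dichotomy is really $t_0\geq L^*$ versus $t_0<L^*$. For small $\eps$ you may still have $t_0$ large (say $t_0\gg L^*$), in which case ``$t_0\leq L^*$ in this regime'' is false and the bound $\tfrac12 C_1\eta^4 t_0$ is not $O(\kappa^{-1})$ for that~$t_0$. This is easily fixed by splitting on $t_0$ instead, or by taking $L=\min(\eps\kappa,L^*)$ uniformly, or simplest of all by just using $L=\eps\kappa$ as the paper does. The admissibility constraint $C_1\eta^4 L\leq A$ is also not actually needed: the inequality $E_0\geq AL-\tfrac12 C_1\eta^4 L^2$ holds for any $0<L\leq t_0$ regardless, since the linear lower bound for $\Tau$ is valid even where it is negative.
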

\begin{proof}
	We note that Lemma \ref{lem:tauderiv} provides us with a linear estimate on $\Tau(t)$. Together with the $L^1$-bound $\int_0^T \Tau(t) \leq E(u_0,g_0)$ this implies the claimed point-wise bound, by simply comparing with an appropriate linear function and calculating the respective $L^1$-norm. 
	
	In particular, consider some time $t_0 \geq \eps \kappa$ and to simplify notation set $A :=  CE_0^3 \delta^{-2} \eta^4 = C \kappa^{-2}$ to be the derivative bound from the previous lemma and $h := \eps \kappa$. We want to show an upper bound for $\Tau(t_0)$. If $\Tau(t_0) \leq A h = C \eps \kappa^{-1}$, we take $Ah$ as our upper bound. Otherwise, define a (positive) linear function $f(t)$ by $f'(t) \equiv A$ and $f(t_0) = \Tau(t_0)$ on $[t_0-h,t_0]$. We find that $\ddt(\Tau(t)-f(t)) \leq 0$, hence $f(t) \leq \Tau(t)$ on $[t_0-h,t_0]$. Thus the $L^1$-norm of $f$ is bounded from above by the $L^1$-norm of $\Tau(t)$ (on $[t_0-h,t_0]$), and in particular by $E_0$. We compute the $L^1$-norm of $f$ as
	\begin{equation}
	\norm{f}_{L^1(t_0 - h,t_0)} = h( \Tau(t_0) - Ah) + \frac{1}{2} Ah^2 = h( \Tau(t_0) - \frac{1}{2}Ah).
	\end{equation}
	Therefore we have
	\begin{align}
	h( \Tau(t_0) - \frac{1}{2}Ah) \leq E_0 \\
	\Tau(t_0) \leq \frac{1}{h} E_0 + \frac{1}{2} Ah \\
	\Tau(t_0) \leq \kappa^{-1} (E_0 \eps^{-1}  + C \eps) = \kappa^{-1} C(\eps).
	\end{align}
	Thus $\Tau(t) \leq \kappa^{-1} C(\eps)$ in either case, proving the claim.
\end{proof}

\subsection{A priori estimates assuming small tension}
We now proceed to establish (uniform in time) a priori estimates for the flow \eqref{eq:flow} assuming an $L^2$-bound on the tension and a lower injectivity radius bound $\inj_g > r_0$ on the metric, aiming to then apply these estimates to the rescaled flow \eqref{eq:flow2}. 

\begin{rmk}
	\label{rmk:uniform}
Consider $(M,g)$ a closed hyperbolic surface, with injectivity radius bounded below by $r_0 < {\inj}_g$, for some $r_0 \leq 1$, and let $x \in M$. Let $r < r_0$, then we can choose particular local isothermal coordinates on the geodesic ball $B_r(x)$, which allow us to view $B_r(x)$ as a disk $(D_{r'},g_H)$ for some $r' < 1$ only depending on $r$, where $g_H$ denotes the Poincar\'{e} metric. This follows immediately by considering a local isometry from $(M,g)$ to the Poincar\'{e} disk, centred at $x$. We will also refer to these coordinates as \emph{hyperbolic isothermal coordinates}.
Hence we can write $g_H = \lambda^2 g_{eucl}$ where $\lambda : D_{r'} \to [2, K]$ denotes the conformal factor, with an upper bound given by a universal constant $K< \infty$ (as we considered disks of hyperbolic radius $r < r_0 \leq 1$, and thus the disk $D_{r'}$ stays away from the boundary of the Poincar\'{e} disk). 
Given a map $u:(M,g) \to N$, we can compute any  $W^{k,p}$-norm with respect to the metric $g$ on such a ball $B_r(x)$. Equivalently, we could work on $(D_{r'},g_H)$ as defined above. Note that instead we could also compute the $W^{k,p}$-norm of $u: (D_{r'},g_{eucl}) \to N$. This will give rise to an equivalent norm for the corresponding Sobolev topology as the conformal factor $\lambda$ stays controlled. In the remainder of this section, we will always view the arising norms as computed on such \emph{euclidean} disks, and also carry out the parabolic regularity theory on euclidean disks, unless specified otherwise.
\end{rmk}

For the classical harmonic map flow, assuming non-positive curvature on the target $N$ leads to uniform estimates for all times. This was first done in \cite{EL} by controlling the evolution of the energy density. 
In our setting, we begin by observing that controlled tension implies controlled energy density, using a bubbling argument exploiting the curvature hypothesis. This is similar to  \cite[Lemma 3.2]{RTnonpositive}.

\begin{lemma}
Let $(N,G)$ be a nonpositively curved smooth closed Riemannian manifold, and let further $r>0$ and take as usual $D_r$ to be the flat disk of radius $r$. Then for smooth maps $u: D_r \to N$ with energy bounded by $E(u, D_r) \leq E_0$, and $\eps >0$, there exists a constant $ 2 \leq K < \infty$, only depending on $N$, $E_0$ and $\eps$, such that for $x \in D_{\frac{r}{2}}$, we have $E(u,D_{r_\eps}(x)) < \eps$ whenever
\label{lem:energytension}
\begin{equation}
\label{eq:repsbound}
r_\epsilon < \frac{r}{ K ( 1 + r \norm{\tau(u)}_{L^2(D_r)}  )}. 
\end{equation}

\end{lemma}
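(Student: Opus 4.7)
The plan is to argue by contradiction via rescaling, exploiting the fact that in dimension two both the Dirichlet energy and the product $r\norm{\tau(u)}_{L^2(D_r)}$ are invariant under the rescaling $u\mapsto u(\rho\,\cdot)$. The denominator on the right hand side of \eqref{eq:repsbound} therefore identifies the natural critical scale at which concentration could appear. First I would assume no such $K$ exists, producing a sequence of smooth maps $u_n:D_r\to N$ with $E(u_n,D_r)\le E_0$, points $x_n\in D_{r/2}$, and radii $\rho_n$ with $\rho_n K_n(1+r\norm{\tau(u_n)}_{L^2(D_r)})=r$ for some $K_n\to\infty$, yet $E(u_n,D_{\rho_n}(x_n))\ge\eps$. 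Since $K_n\to\infty$, necessarily $\rho_n\to 0$.

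Next I would rescale by setting $\tilde u_n(y):=u_n(x_n+\rho_n y)$, defined on $D_{R_n}$ with $R_n\ge r/(2\rho_n)\to\infty$. By conformal invariance of the energy in dimension two, $E(\tilde u_n,D_R)\le E_0$ for any fixed $R$ once $n$ is large. Using $\tau(\tilde u_n)(y)=\rho_n^2\,\tau(u_n)(x_n+\rho_n y)$, a change of variables gives $\norm{\tau(\tilde u_n)}_{L^2(D_R)}=\rho_n\norm{\tau(u_n)}_{L^2(D_{R\rho_n}(x_n))}\le K_n^{-1}\to 0$. The concentration assumption becomes $E(\tilde u_n,D_1)\ge\eps$, so along the rescaled sequence energy stays concentrated at scale one while the tension vanishes in $L^2_{\mathrm{loc}}$ and the domain exhausts $\mathbb{R}^2$.

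I would then run the standard Sacks--Uhlenbeck/Struwe bubble extraction, using Lemma \ref{lem:h2bound} in its flat Euclidean form (as justified by Remark \ref{rmk:uniform}) as the almost-harmonic small-energy regularity input: on any disk where $E(\tilde u_n,D_\rho(y))<\eps_0$ one gets a $W^{2,2}$-bound on $\tilde u_n$, hence strong $W^{1,2}_{\mathrm{loc}}$-compactness by Rellich. Either energy does not concentrate further, and a subsequence of $\tilde u_n$ converges strongly in $W^{1,2}_{\mathrm{loc}}(\mathbb{R}^2)$ to a weak (hence, since the tension vanishes, by elliptic regularity smooth) harmonic map $u_\infty$ with $E(u_\infty,D_1)\ge\eps$; or it does, in which case one performs a secondary rescaling around points where the local energy equals, say, $\eps_0/2$, and extracts in the same way a non-trivial smooth harmonic limit $u_\infty:\mathbb{R}^2\to N$ of finite total energy.

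Finally, by the Sacks--Uhlenbeck removable singularity theorem $u_\infty$ extends to a non-constant smooth harmonic map $S^2\to N$. But since $N$ has non-positive sectional curvature, its universal cover is Cartan--Hadamard and in particular contractible, so any map $S^2\to N$ is null-homotopic and lifts; applying the Eells--Sampson Bochner identity to the lift then forces the harmonic map to be constant. This contradicts $u_\infty$ being non-constant and yields the claimed universal constant $K=K(N,E_0,\eps)$. The main difficulty lies in the bubble extraction step: one must choose the rescalings so that the resulting limit carries positive energy on $D_1$, and the essential tool enabling this is precisely the $\eps_0$-regularity of Lemma \ref{lem:h2bound} for maps with controlled $L^2$-tension.
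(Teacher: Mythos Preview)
Your proposal is correct and follows essentially the same approach as the paper: reduce (implicitly or explicitly) by scaling, argue by contradiction to obtain a sequence with concentrated energy at shrinking scale and tension tending to zero after rescaling, extract a nonconstant finite-energy harmonic map from $\mathbb{R}^2$, remove the singularity at infinity to obtain a harmonic $S^2\to N$, and contradict nonpositive curvature. The paper compresses the bubble extraction and the nonexistence of harmonic spheres into citations (to \cite{HRT} and \cite{RTnonpositive}), whereas you spell these out; note that the small-energy regularity you need is the local flat-disk estimate \eqref{eq:localeps} inside the proof of Lemma~\ref{lem:h2bound}, not the global statement of that lemma itself.
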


\begin{proof}
Observe that we can restrict to the case $r=1$ and deduce the remaining cases by scaling. We argue by contradiction. Assume the lemma is false for some $\eps > 0$, $E_0 > 0$, then we obtain a sequence $r_i = \frac{1}{ i ( 1 + \norm{\tau(u_i)}_{L^2(D_1)} )} \in (0,\frac{1}{2})$ for $i \geq 2$ together with maps $u_i: D_1 \to N$ with $E(u_i, D_1) \leq E_0$ and points $x_i \in D_{\frac{1}{2}}$ with $E(u_i, D_{r_i}(x_i)) \geq \eps$. Note that $r_i \to 0$. We can therefore consider the restrictions of the maps $u_i$ to $D_{\frac{1}{2}}(x_i) \subset D_1$, and after shifting $x_i$ to the origin these form a sequence of maps (still labeled as $u_i$ for convenience) $u_i: D_{\frac{1}{2}} \to N$ with $E(u_i, D_{r_i}) \geq \eps$. We now rescale these maps by $\frac{1}{r_i}$ to obtain maps from larger and larger disks $\tilde{u}_i : D_{\frac{1}{2r_i}} \to N$, with $E(\tilde{u}_i, D_1) \geq \eps$. By the rescaling, the tension now satisfies
\begin{equation}
\norm{ \tau(\tilde{u}_i) }_{L^2} = r_i \norm{ \tau (u_i) }_{L^2(D_{\frac{1}{2}})} \leq \frac{1}{i} \frac{ \norm{\tau(u_i)}_{L^2(D_1)}} {1+  \norm{\tau(u_i)}_{L^2(D_1)} } \to 0.
\end{equation}
Therefore a standard bubbling argument, see e.g. \cite[Theorem 1.5]{HRT}, allows us to extract a nonconstant harmonic map $\tilde{u}_\infty : \mathbb{R}^2 \to N$, which can be extended to a (also nonconstant) harmonic map from $S^2 \to N$, which contradicts the curvature assumption on N (see e.g. \cite[Lemma 2.1]{RTnonpositive}). 
\end{proof}

We can apply the above local argument around each point on $M$ to obtain a version for maps $u:(M,g) \to N$.
\begin{cor}
	\label{lem:energyconctension}
	Let again $(N,G)$ be a nonpositively curved smooth closed Riemannian manifold and further take $M$ to be a smooth closed oriented surface of genus $\gamma \geq 2$. Given $r_0 > 0$ and $g \in  {\cal{M}}_{-1}$ with $\inj_{g_0} \geq r_0$, a smooth map $u: (M,g) \to N$ with bounded energy $E(u, g) \leq E_0$ and an $\epsilon >0$, there exists a constant $ K < \infty$, only depending on $\epsilon$, $r_0$, $N$ and $E_0$, such that for $x \in M$, we have $E(u,B_{r_\eps}(x)) < \eps$ whenever
	\begin{equation}	
	\label{eq:densitytension}
	r_\eps (u,g) < \frac{ 1 }{ K ( 1 + \norm{\tau(u)}_{L^2(M,g)}  )}.
	\end{equation}
	
\end{cor}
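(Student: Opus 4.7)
The plan is to reduce the global statement on $(M,g)$ to the local Euclidean statement in Lemma \ref{lem:energytension} by working in the hyperbolic isothermal coordinates introduced in Remark \ref{rmk:uniform}, and to keep track of how the tension and energy transform under the (bounded) conformal change.

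First, fix $x \in M$ and set $r = \min\{r_0, 1\}/2$. By Remark \ref{rmk:uniform}, the geodesic ball $B_{2r}(x) \subset (M,g)$ can be identified, via an appropriate local isometry from the Poincar\'e disk, with a Euclidean disk $(D_{r'}, g_{\text{eucl}})$ centred at the origin, where $r'=r'(r)>0$ depends only on $r$, and where $g = \lambda^2 g_{\text{eucl}}$ for a conformal factor $\lambda: D_{r'} \to [2,K_0]$ with $K_0=K_0(r_0)<\infty$ universal.

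The crucial observation is that in two dimensions, under the conformal change $g = \lambda^2 g_{\text{eucl}}$, we have $\tau_{\text{eucl}}(u) = \lambda^2 \tau_g(u)$ and $dv_{\text{eucl}} = \lambda^{-2} dv_g$, while the energy is invariant. Thus
\begin{equation}
E(u, D_{r'}; g_{\text{eucl}}) = E(u, B_{2r}(x); g) \leq E_0,
\end{equation}
and
\begin{equation}
\|\tau_{\text{eucl}}(u)\|_{L^2(D_{r'})}^2 = \int_{D_{r'}} \lambda^2 |\tau_g(u)|^2 dv_g \leq K_0^2 \|\tau_g(u)\|_{L^2(M,g)}^2.
\end{equation}
Now apply Lemma \ref{lem:energytension} on the flat disk $D_{r'}$ with the same $\eps$, the same energy bound $E_0$, and radius $r'$: there exists $K_1=K_1(\eps,E_0,N,r_0)<\infty$ such that $E(u, D_{\rho}(0); g_{\text{eucl}}) < \eps$ whenever $\rho < \frac{r'}{K_1 (1 + r'\|\tau_{\text{eucl}}(u)\|_{L^2(D_{r'})})}$. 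Using the bounds on $r', \lambda$ and the tension transformation, this is ensured by $\rho < \frac{c_0}{K_1(1 + K_0 \|\tau_g(u)\|_{L^2(M,g)})}$ for some $c_0 = c_0(r_0)>0$.

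Finally, I translate back to the geodesic ball on $M$. Because $\lambda \geq 2$, the Euclidean disk $D_\rho(0)$ contains the Euclidean preimage of the hyperbolic ball $B_{2\rho}(x) \cap B_{2r}(x)$, so choosing $\rho$ above gives $B_{r_\eps}(x) \subset D_\rho(0)$ for $r_\eps \leq 2\rho$; combining the constants into a single $K=K(\eps,r_0,N,E_0)$ yields the bound \eqref{eq:densitytension} and, by conformal invariance of the energy, the desired estimate $E(u,B_{r_\eps}(x)) < \eps$. The main (minor) obstacle is precisely this bookkeeping of the various radii and the verification that all arising constants depend only on the stated parameters $\eps$, $r_0$, $N$, $E_0$ and not on the particular point $x$ or on $g$ beyond its injectivity radius bound; both are uniform because the conformal factor $\lambda$ on the Poincar\'e disk away from its boundary is controlled by universal constants.
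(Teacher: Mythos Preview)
Your proof is correct and is precisely the argument the paper has in mind: the paper gives no proof of this corollary beyond the sentence ``We can apply the above local argument around each point on $M$,'' and you have filled in the details by passing to hyperbolic isothermal charts (Remark~\ref{rmk:uniform}), using conformal invariance of the energy and the transformation $\tau_{\mathrm{eucl}}(u)=\lambda^2\tau_g(u)$, and then invoking Lemma~\ref{lem:energytension} on the resulting flat disk. One cosmetic point: with your choice $r=\min\{r_0,1\}/2$ you may get $2r=r_0$, whereas Remark~\ref{rmk:uniform} asks for the radius to be strictly below $r_0$; taking e.g.\ $r=\min\{r_0,1\}/3$ avoids this.
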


Thus we can obtain $W^{2,2}$-bounds on such maps of controlled energy and tension.

\begin{lemma}
	\label{lem:tension_apriori}
	Let $B_r(x)$ for some $x \in M$ be as above with radius $r < r_0 < {\inj}_{g}$ where $(M,g)$ is a smooth closed oriented hyperbolic surface as usual, $N$ has nonpositive sectional curvature, and $u:(M,g) \to N$ is a smooth map with energy bounded by $E(u,g) \leq E_0$ and tension bounded by $\norm{ \tau_{g}(u) }_{L^2(M,g)} \leq K$. Then we have $\norm{ \nabla^2 u }^2_{L^2(B_r(x) )} \leq C $, for some $C$ only depending on $r_0$, $E_0$, $K$ and $N$.
\end{lemma}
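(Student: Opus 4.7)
The statement asks for a $W^{2,2}$-bound on $u$ in terms of the energy bound $E_0$, the tension bound $K$, the injectivity radius bound $r_0$ and the target $N$. The natural route is to combine the new Corollary \ref{lem:energyconctension} (which quantifies how the small-energy radius degrades when the tension is large) with the small-energy $W^{2,2}$ estimate of Lemma \ref{lem:h2bound}.

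First I would invoke Corollary \ref{lem:energyconctension} with $\epsilon := \epsilon_0$, where $\epsilon_0 = \epsilon_0(N)$ is the universal small-energy constant of Lemma \ref{lem:h2bound}. This produces a positive radius
\[
r_{\epsilon_0} \; < \; \frac{1}{K_0(1 + \norm{\tau_g(u)}_{L^2(M,g)})} \; \leq \; \frac{1}{K_0(1+K)},
\]
where $K_0 = K_0(\epsilon_0, r_0, N, E_0)$, such that $E(u, B_{r_{\epsilon_0}}(x)) < \epsilon_0$ for every $x \in M$. Crucially, $r_{\epsilon_0}$ depends only on $r_0$, $E_0$, $K$ and $N$ — precisely the quantities allowed in the statement.

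Next I would feed this radius into Lemma \ref{lem:h2bound}, which (after reducing $r_{\epsilon_0}$ further if necessary so that it lies below $\inj(M,g)$, using $\inj(M,g)\geq r_0$) gives
\[
\int_M |\nabla^2 u|^2 \, dv_g \; \leq \; C\Bigl(1 + \int_M |\tau_g(u)|^2 \, dv_g\Bigr) \; \leq \; C(1 + K^2),
\]
with $C$ depending only on $r_{\epsilon_0}$, $E_0$, $(M,g)$ and $N$, hence only on $r_0$, $E_0$, $K$ and $N$. Restricting the integral on the left to $B_r(x)$ only shrinks it, so the claim follows immediately; a fortiori one obtains the bound on all of $M$, which is stronger than what is stated.

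The only subtlety — and the step I would be most careful about — is the dependence chain: one must check that the constant $K_0$ in Corollary \ref{lem:energyconctension} does not itself implicitly depend on the tension (it does not, by construction), and that the constant $C$ in the local regularity estimate \eqref{eq:localeps} can be expressed purely in terms of $r_{\epsilon_0}$, $E_0$ and $N$. Since both facts are already embedded in the cited lemmas, the proof reduces to stringing them together; no further analysis is required.
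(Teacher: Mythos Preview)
Your proposal is correct and follows essentially the same route as the paper: apply Corollary~\ref{lem:energyconctension} with $\epsilon=\epsilon_0$ to obtain a controlled small-energy radius, then feed this into Lemma~\ref{lem:h2bound} and use the tension bound $\norm{\tau_g(u)}_{L^2}\leq K$ to conclude the global $W^{2,2}$ estimate, finally restricting to $B_r(x)$. The only point the paper adds that you omit is the remark that the resulting norm is to be understood in the sense of Remark~\ref{rmk:uniform} (i.e.\ computed on a flat disk via hyperbolic isothermal coordinates), which is a convention rather than an analytic step.
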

\begin{proof}
	We apply Lemma \ref{lem:energyconctension} with $\eps = \eps_0$ from Lemma \ref{lem:h2bound}, 
		giving us a radius $r_\eps$ such that in
		 particular $E(u,B_{r_\eps}(y)) < \eps_0$ for all $y \in M$. 
	Hence estimate \eqref{eq:h2bound} applies, and using $\norm{ \tau_{g}(u) }_{L^2(M,g)} \leq K$ we find
	\begin{equation}
	\int_M   |\nabla^2 u|^2  dv_g \leq C.
	\end{equation}
	Restricting to $B_r(x)$ we obtain the claim. Note that we understand  $\norm{ \nabla^2 u }^2_{L^2(B_r(x) )}$ in the sense of Remark \ref{rmk:uniform} (i.e. as computed on a flat disk).
\end{proof}

Using this $H^2$-bound we then apply standard parabolic theory to the equation for the map.

\begin{lemma}
	\label{lem:parabolic}
	Given  a smooth closed oriented surface $M$ of genus $\gamma \geq 2$ and $N$ a smooth closed Riemannian manifold with nonpositive sectional curvature, let $(u(t),g(t))$ be a solution to \eqref{eq:flow} on $[0,T]$ with coupling constant $\eta$, starting at initial data $(u_0,g_0)$, with energy bounded by $E(u_0,g_0) \leq E_0$ and such that there exists $r_0 > 0$
	with ${\inj}_{g(t)} \geq r_0$. Further assume that the tension field satisfies $\norm{\tau_{g(t)}(u(t))}_{L^2(M,g(t))} < K$ for all t in some time interval $[T_1,T_2] \subset [0,T]$ of length at least 2. Choose some time $t_0 \in [T_1 + 1, T_2-1]$, then on any hyperbolic isothermal chart $U = B_r(x)$ for $r < \frac{r_0}{4}$ defined with respect to $g(t_0)$, we obtain a bound in the parabolic Sobolev space $W^{2,1}_p(B_{\frac{r}{2}}(x) \times [t_0-\frac{1}{2},t_0 + 1])$ for all $p < \infty$ of the form
	\begin{equation}
	\norm{u(t)}_{W^{2,1}_p } \leq C
	\end{equation} 
	where $C$ depends only on $r_0$, $E_0$, $p$, $K$ and $N$, assuming $\eta \leq \eta_0$ with $\eta_0 > 0$ only depending on $E_0$, $r_0$ and $\gamma$.
\end{lemma}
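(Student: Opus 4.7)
The plan is to fix hyperbolic isothermal coordinates at $(x, t_0)$, view the map equation as a linear parabolic system with variable coefficients and a right-hand side depending on $\nabla u$, and then invoke interior parabolic $L^p$-theory. Written out in such coordinates on the flat disk corresponding to $B_r(x)$, the equation $\partial_t u = \tau_{g(t)}(u)$ takes the form
\begin{equation}
\partial_t u^k - g^{ij}(x,t)\,\partial_i\partial_j u^k = -g^{ij}(x,t)\,\Gamma^l_{ij}(g(t))\,\partial_l u^k + g^{ij}(x,t)\, A^k(u)(\partial_i u, \partial_j u),
\end{equation}
where $A$ denotes the second fundamental form of $N \hookrightarrow \mathbb{R}^n$.

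First I would verify that the coefficients of the principal part are uniformly elliptic with sufficient space-time regularity on the cylinder $B_{r/2}(x)\times[t_0-1/2,t_0+1]$. The $L^2$-length estimate \eqref{eq:L2small} gives $L(g,[T_1,T_2]) \leq \eta \sqrt{(T_2-T_1) E_0}$, which can be made as small as needed for $\eta \leq \eta_0 = \eta_0(E_0, r_0, \gamma)$. Combined with the hypothesis $\inj_{g(t)} \geq r_0$, Lemmas \ref{lem:Ckmetric} and \ref{lem:ckequiv} give uniform $C^k$-equivalence of the metrics $g(t)$ on $[T_1,T_2]$, while Lemma \ref{lem:uniformPg} together with the $L^\infty$-bound on holomorphic quadratic differentials used in the proof of Lemma \ref{lem:tauderiv} gives a $C^k$-bound on $\partial_t g$ proportional to $\eta^2$. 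Hence both $g^{ij}(x,t)$ and $\Gamma^l_{ij}(g(t))$ enjoy uniform space-time $C^k$-bounds on the chart.

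Next I would promote the assumed $L^2$-bound on the tension to higher integrability of $\nabla u$. Applying Lemma \ref{lem:tension_apriori} at each fixed $t\in[T_1,T_2]$ delivers a uniform bound $\|\nabla^2 u(t)\|_{L^2(B_r(x))} \leq C(r_0, E_0, K, N)$. The two-dimensional Sobolev embedding $H^2 \hookrightarrow W^{1,q}$ for every $q<\infty$ then yields $\nabla u \in L^\infty([T_1,T_2]; L^q(B_r(x)))$ with the same dependencies. Since $N$ is compact, the forcing term on the right-hand side of the display is pointwise dominated by $C(|\nabla u|^2 + |\nabla u|)$, and consequently lies in $L^p(B_r(x)\times[T_1,T_2])$ for every $p<\infty$.

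Since $t_0 \in [T_1 + 1, T_2 - 1]$, the parabolic cylinder $B_{r/2}(x)\times[t_0-1/2, t_0+1]$ is compactly contained in $B_r(x)\times[T_1, T_2]$, so I would close the argument by invoking the standard interior $W^{2,1}_p$-regularity estimate for linear parabolic operators with continuous coefficients (as used in \cite[Section 3]{rupflin_existence}) to obtain the claimed bound. The main technical obstacle is genuinely just coefficient bookkeeping: one must verify that the time-variation of $g$ is mild enough — uniformly across $[T_1, T_2]$ — for classical parabolic $L^p$-theory to apply, and this is precisely what the smallness of $\eta$ together with the injectivity radius lower bound delivers, via the horizontal-curve estimates of Section~\ref{sec:etasmall}.
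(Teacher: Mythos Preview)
Your proposal is correct and follows essentially the same route as the paper: write the map equation as a linear parabolic system, use Lemma~\ref{lem:tension_apriori} together with the Sobolev embedding $H^2\hookrightarrow W^{1,p}$ to place the right-hand side in $L^p$, control the metric coefficients on the time interval via Lemma~\ref{lem:Ckmetric} for small $\eta$, and invoke interior parabolic $L^p$-regularity. The paper's proof is terser---it applies Lemma~\ref{lem:tension_apriori} at $t_0$ and then transfers the bound to neighbouring times via metric equivalence, whereas you apply it directly at each $t\in[T_1,T_2]$---but the substance is identical.
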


\begin{proof}
Observe that $u$ satisfies the parabolic equation $u_t - \lap_g(u) = A_g(u)(\nabla u, \nabla u)$. By Lemma \ref{lem:tension_apriori} and the embedding $W^{2,2}(M,g(t_0)) \hookrightarrow W^{1,p}(M,g(t_0))$ for $p < \infty$ we find $u_t - \lap_g(u) \in L^p(M,g(t_0))$, and the corresponding local bounds on $B_{\frac{r}{2}}(x)$. For sufficiently small $\eta_0$, these estimates also extend to $t \in [t_0-1,t_0+1]$ by Lemma \ref{lem:Ckmetric}, hence standard interior parabolic regularity  gives the desired result (e.g. \cite[IV, Theorem 9.1]{Lady}).
\end{proof}

\begin{cor}
	\label{cor:bootstrap}
	Let $k$ be a nonnegative integer and $\alpha \in (0,1)$. Under the assumptions of the previous lemma, we have the spatial H\"older norms $C^{k, \alpha}$ of $u(t_0)$ bounded (on some slightly smaller ball $B_{r'}(x)$), with bounds only depending on $k$, $\alpha$, $K$, $r_0$, $E_0$, $r'$ and $N$.
\end{cor}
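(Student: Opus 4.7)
The plan is a routine bootstrap from the parabolic $W^{2,1}_p$ bounds supplied by Lemma \ref{lem:parabolic}, combining parabolic Sobolev embedding and iterated Schauder estimates on successively smaller parabolic cylinders centred at $(x,t_0)$.

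First I would fix $p < \infty$ as large as needed (certainly $p > 4$) and invoke Lemma \ref{lem:parabolic} to obtain
$\norm{u}_{W^{2,1}_p(B_{r/2}(x)\times[t_0-1/2,t_0+1])} \leq C$
with $C$ depending only on the data listed in the statement. Since the parabolic dimension is $3$, the embedding \cite[II, Lemma 3.3]{Lady} gives spatial-temporal Hölder control
$u \in C^{1+\beta,(1+\beta)/2}$
on a slightly smaller cylinder, for some $\beta = \beta(p) \in (0,1)$ that can be taken arbitrarily close to $1$ by raising $p$.

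Second, I would rewrite the map equation in the hyperbolic isothermal chart of Remark \ref{rmk:uniform} as
\begin{equation*}
u_t - g^{ij}(t)\,\partial_i\partial_j u \;=\; -\,g^{ij}(t)\Gamma^k_{ij}(t)\,\partial_k u \;+\; A(u)(\nabla u,\nabla u).
\end{equation*}
The coefficients $g^{ij}(t)$ and $\Gamma^k_{ij}(t)$ are uniformly controlled in spatial $C^k$ by Lemma \ref{lem:ckequiv}, and satisfy $\norm{\pt g_\eta}_{C^k(M,g_0)} \leq C$ by Remark \ref{rmk:eta}(\ref{Claim:eta}), so they are uniformly $C^{k,1/2}$ in parabolic Hölder norms for every $k$, with constants independent of $\eta \leq \eta_0$. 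Combined with the step-one bound $u \in C^{1+\beta,(1+\beta)/2}$ and the smoothness of $A$ on $N$, the right-hand side of the displayed equation lies in $C^{\beta,\beta/2}$. Interior parabolic Schauder theory (e.g. \cite[IV, Theorem 10.1]{Lady}) then upgrades this to $u \in C^{2+\beta,1+\beta/2}$ on a slightly shrunken cylinder.

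Third, I would iterate: letting $v = \partial_\ell u$, one differentiates the equation in the fixed spatial coordinates to obtain a linear parabolic equation for $v$ whose coefficients are the already-controlled $g^{ij}, \Gamma^k_{ij}$ and whose source is a polynomial expression in $\nabla u$, $\nabla^2 u$, $\nabla g$, $\nabla A(u)$, all of which were controlled in Hölder norm at the previous step. A further Schauder estimate yields $v \in C^{2+\beta,1+\beta/2}$, hence $u \in C^{3+\beta,(3+\beta)/2}$ on a smaller cylinder. After $k$ iterations (each shrinking the spatial ball by a fixed fraction, but arranged to stay inside $B_{r'}(x)$) and restricting to the time slice $t=t_0$, I obtain the desired bound on $\norm{u(t_0)}_{C^{k,\alpha}(B_{r'}(x))}$, noting that $\beta$ can be taken $\geq \alpha$ by choosing $p$ sufficiently large.

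The only real concern is uniformity in $\eta$: each Schauder application produces a constant depending on the ellipticity and Hölder norms of the coefficients, but these are all controlled independently of $\eta \leq \eta_0$ by Remark \ref{rmk:eta} together with Lemma \ref{lem:ckequiv}, so the chain of iterations terminates with constants depending only on $k$, $\alpha$, $r_0$, $r'$, $E_0$, $K$ and $N$, as required.
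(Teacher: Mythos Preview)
Your proposal is correct and is precisely the standard parabolic bootstrap that the paper's one-line proof (``a standard bootstrapping argument, see also the proof of \cite[Theorem 3.8]{rupflin_existence}'') is pointing to. One small caveat: your appeal to Remark~\ref{rmk:eta}(\ref{Claim:eta}) for the time-regularity of the metric coefficients belongs to the Section~2 setting; in the present context the needed bound $\norm{\pt g}_{C^k}\leq C\eta^2$ (with $C$ depending on $r_0$ rather than on $g_0$) should instead be read off from the $C^k$ estimates for holomorphic quadratic differentials under an injectivity-radius lower bound, as already used in Lemma~\ref{lem:tauderiv} and Lemma~\ref{lem:ckequiv}, but this does not affect your argument.
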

\begin{proof}
	This is a standard bootstrapping argument, see also the proof of \cite[Theorem 3.8]{rupflin_existence}. 
\end{proof}

\begin{rmk}
	Note that we could also get control on higher order time derivatives of the map by considering an explicit formula for the projection operator $P_g$, which would lead to bounding (higher order) time derivatives of the metric coefficients and allow us to carry out an iteration argument similar to the above. This was done in \cite{rupflin_existence}. However, as we only apply these estimates to the rescaled flow \eqref{eq:flow2} (where the time regularity would degenerate) we do not do this here.
\end{rmk}

We also observe that by elliptic regularity the restriction of any \emph{harmonic} map $u: (M,g(t)) \to N$ to a chart $B_r(x)$ enjoys good estimates, as we assumed $N$ to have nonpositive sectional curvature, see e.g. \cite{ES}. 

\begin{lemma}
	\label{lem:harmonic}
	Consider again a smooth closed oriented surface $M$ of genus $\gamma \geq 2$ and $N$ a smooth closed Riemannian manifold with nonpositive sectional curvature. Let $u:(M,g) \to N$ be a harmonic map with energy bounded by $E(u,g) \leq E_0$, and assume that $\inj_g > r_0 > 0$. Take $B_r(x)$ as before with $r < r_0$, then for any nonnegative integer $k$ and $\alpha \in (0,1)$ the H\"older-norms $C^{k, \alpha}$ of $u$ on $B_r(x)$ are controlled in terms of $k$, $\alpha$, $E_0$, $r_0$, $r$ and $N$.
\end{lemma}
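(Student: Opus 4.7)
The plan is to first obtain a uniform $L^\infty$ bound on $|du|$ via Bochner's formula, and then apply standard elliptic bootstrapping to the harmonic map equation in hyperbolic isothermal coordinates.

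For the first step, I would use the well-known Bochner identity for harmonic maps, which reads
\begin{equation*}
\tfrac{1}{2} \Delta_g |du|^2 = |\nabla du|^2 + \langle \Ric^M(du), du \rangle - \langle \Rm^N(du,du)du,du\rangle.
\end{equation*}
Since $N$ has nonpositive sectional curvature, the final curvature term is nonpositive, and since $(M,g)$ is hyperbolic with $\Ric^M = -g$, we obtain
\begin{equation*}
\Delta_g |du|^2 \geq -2|du|^2.
\end{equation*}
In particular $|du|^2$ is a nonnegative subsolution (up to a zeroth order term with bounded coefficient) of a uniformly elliptic equation on $(M,g)$. Using the injectivity radius lower bound $r_0$, a standard Moser iteration (or the mean value inequality for subsolutions) on geodesic balls $B_{r_0/2}(y)$ yields a pointwise bound
\begin{equation*}
\sup_{B_{r_0/4}(y)} |du|^2 \leq C \int_{B_{r_0/2}(y)} |du|^2 \, dv_g \leq 2CE_0,
\end{equation*}
with $C$ depending only on $r_0$. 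Covering $M$ by finitely many such balls gives a global bound $\|du\|_{L^\infty(M,g)} \leq C(E_0,r_0)$.

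Now I transfer to hyperbolic isothermal coordinates on $B_r(x)$ as in Remark \ref{rmk:uniform}, where the metric takes the form $g = \lambda^2 g_{\text{eucl}}$ with $\lambda$ bounded above and below by universal constants. In these coordinates the harmonic map equation becomes a semilinear elliptic system
\begin{equation*}
\Delta_{\text{eucl}} u^\alpha = -g_{\text{eucl}}^{ij} \Gamma^{\alpha}_{\beta\gamma}(u) \partial_i u^\beta \partial_j u^\gamma,
\end{equation*}
(after absorbing the conformal factor, which acts trivially on the harmonic map equation in dimension two). With $|\nabla u|$ already bounded in $L^\infty$, the right hand side is bounded in $L^\infty$, so $L^p$ elliptic regularity gives $u \in W^{2,p}$ on a slightly smaller disk, for every $p<\infty$, which by Sobolev embedding yields $u \in C^{1,\alpha}$.

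From there the argument is a routine bootstrap. Having $u \in C^{1,\alpha}$, the right hand side of the harmonic map equation lies in $C^{0,\alpha}$, so Schauder estimates give $u \in C^{2,\alpha}$; differentiating the equation and iterating produces $C^{k,\alpha}$ bounds on any smaller concentric disk, with constants depending only on $k$, $\alpha$, $E_0$, $r_0$, $r$ and the geometry of $N$ (via the target Christoffel symbols). The only non-routine point is the Bochner/Moser step, and the main care needed is to ensure the $L^\infty$ bound on $|du|$ is independent of the particular metric $g$, which is precisely what the injectivity radius lower bound $r_0$ together with the fixed Ricci curvature of hyperbolic surfaces guarantees.
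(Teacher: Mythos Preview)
Your argument is correct. The paper itself does not give a proof of this lemma; it simply states the result and refers to elliptic regularity and the classical Eells--Sampson paper \cite{ES}. Your Bochner--Moser argument followed by Schauder bootstrapping in isothermal coordinates is precisely the Eells--Sampson method being invoked, so your proposal is in line with what the paper intends.

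One minor remark: within the paper's own framework there is a slightly shorter route that avoids redoing the Bochner step. Since $u$ is harmonic, its tension vanishes, so Lemma~\ref{lem:tension_apriori} (with $K=0$) already gives a uniform $W^{2,2}$ bound on $u$ depending only on $E_0$, $r_0$ and $N$; Sobolev embedding then yields $W^{1,p}$ for all $p<\infty$, and from there your elliptic bootstrap proceeds exactly as you wrote. This is the elliptic analogue of the paper's parabolic argument (Lemma~\ref{lem:parabolic} and Corollary~\ref{cor:bootstrap}), and it has the advantage of reusing machinery already established. Either approach is fine.
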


So far in this section we have only obtained $\emph{local}$ bounds, on flat disks, for harmonic maps and solutions to \eqref{eq:flow2} with a priori bounded tension and injectivity radius. In fact, using estimates established earlier, we can actually obtain bounds with respect to the fixed norm $\norm{\cdot}_{C^k(M,g_0)}$ away from $t = 0$ for any solution to \eqref{eq:flow2}.

\begin{prop}
	\label{thm:Ckbounds}
	Take $M$ to be a smooth closed oriented surface of genus $\gamma \geq 2$ and $N$ to be a smooth closed Riemannian manifold with nonpositive sectional curvature. Given initial data $(u_0,g_0)$, let $0 < T \leq T_0$ with $T_0$ from Lemma \ref{lem:injectivity}, $\kappa \geq 1$ large enough so that $\eta \leq \eta_0$ with $\eta_0$ from Lemma \ref{lem:parabolic}, and consider the associated solution $(u_\kappa(t),g_\kappa(t))$ of \eqref{eq:flow2} defined on $[0,T]$ with rescaled coupling constant $\kappa$, starting at $(u_0,g_0)$. Let further $k$ be a nonnegative integer and $\frac{1}{\kappa} < \epsilon$, then for $0 < \epsilon \leq t \leq T$, we can obtain bounds on $\norm{u_\kappa(t)}_{C^k(M,g_0)}$, in terms of $k$, $\epsilon$, $E(u_0,g_0)$, ($M,g_0)$ and $N$. Additionally, if $\bar{u}(t)$ is a harmonic map with respect to $g_\kappa(t)$ that is homotopic to $u_0$, we can bound $\norm{\bar{u}(t)}_{C^k(M,g_0)}$ in terms of $k$, $E(u_0,g_0)$, ($M,g_0)$ and $N$.
\end{prop}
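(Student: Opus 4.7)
The plan is to combine the small-tension estimate of Corollary \ref{lem:odetau} with the local parabolic regularity in Lemma \ref{lem:parabolic} and Corollary \ref{cor:bootstrap}, and then globalise using the uniform equivalence of norms from Lemma \ref{lem:ckequiv}. For the harmonic-map part, a parallel argument will use Lemma \ref{lem:harmonic} in place of the parabolic result.

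For the flow statement, I fix $t_0 \in [\epsilon, T]$ and pass to the unscaled flow $(u_\eta, g_\eta)$ on $[0, \kappa T]$ via the change of variable $s = \kappa t$, so that $u_\kappa(t_0) = u_\eta(\kappa t_0)$. Corollary \ref{lem:odetau}, applied with $\epsilon$ replaced by $\epsilon/2$ say, yields $\Tau(u_\eta(s)) \leq C(\epsilon)\kappa^{-1} \leq C(\epsilon)$ for every $s \geq \tfrac{1}{2}\epsilon\kappa$, i.e.\ uniformly in $\kappa$. Lemma \ref{lem:injectivity} provides a uniform lower bound $\inj_{g_\kappa(t)} \geq r_0 > 0$ on $[0,T]$, and energy monotonicity gives $E(u_\kappa(t), g_\kappa(t)) \leq E_0$. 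Taking $\kappa$ sufficiently large (as forced by $\tfrac{1}{\kappa} < \epsilon$, together with an additional threshold of the form $\kappa \geq 2/(T-\epsilon)$), the unscaled interval $[\tfrac12\epsilon\kappa, \kappa T]$ has length at least $2$ and contains $s_0 := \kappa t_0$ at distance at least $1$ from its endpoints. Lemma \ref{lem:parabolic} then furnishes $W^{2,1}_p$ bounds for $u_\eta$ on hyperbolic isothermal charts $B_r(x) \times [s_0-\tfrac12, s_0+1]$ with $r < r_0/4$, and Corollary \ref{cor:bootstrap} bootstraps these to spatial $C^{k,\alpha}$ bounds on slightly smaller balls. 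Since $\inj_{g_\kappa(t_0)} \geq r_0$, a controlled finite number of such charts cover $M$, producing a bound on $\norm{u_\kappa(t_0)}_{C^k(M,g_\kappa(t_0))}$; Lemma \ref{lem:ckequiv} then converts this into the desired bound in $\norm{\cdot}_{C^k(M,g_0)}$.

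For the harmonic map $\bar{u}(t)$, the argument is simpler since no time regularity is involved. Because $N$ has nonpositive sectional curvature, Hartman's theorem (or the classical Eells--Sampson result) ensures that $\bar{u}(t)$ minimises energy within its homotopy class with respect to $g_\kappa(t)$, so $E(\bar{u}(t), g_\kappa(t)) \leq E(u_0, g_\kappa(t))$. By Corollary \ref{cor:metricequiv} (or the energy-density comparison implicit in Remark \ref{rmk:eta}), this is in turn bounded by $CE_0$. Applying Lemma \ref{lem:harmonic} on the same finite cover by hyperbolic isothermal charts and summing yields a bound in $C^k(M,g_\kappa(t))$, which by Lemma \ref{lem:ckequiv} transfers to a bound in $C^k(M,g_0)$ depending only on $k$, $E_0$, $(M,g_0)$ and $N$.

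The main technical point is verifying that the hypotheses of Lemma \ref{lem:parabolic} are met for the unscaled flow, which requires the tension to be controlled on a time interval of length at least $2$ in \emph{unscaled} time around $s_0$. This is precisely where the rescaling pays off: an interval of size $\epsilon$ in the rescaled time becomes an interval of size $\epsilon\kappa$ in unscaled time, so for $\kappa$ above a threshold depending on $\epsilon$ and $T$ we have the required room. Beyond this set-up, the argument is essentially a packaging of the prior lemmas, with all constants depending only on the permitted parameters $k$, $\epsilon$, $E_0$, $(M,g_0)$ and $N$.
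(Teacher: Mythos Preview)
Your proposal is correct and follows essentially the same approach as the paper: injectivity radius control from Lemma~\ref{lem:injectivity}, tension control from Corollary~\ref{lem:odetau}, local regularity from Lemma~\ref{lem:parabolic}/Corollary~\ref{cor:bootstrap} (resp.\ Lemma~\ref{lem:harmonic} for $\bar u$), then globalisation via Lemma~\ref{lem:ckequiv}. The only cosmetic difference is that the paper bounds $E(\bar u(t),g_\kappa(t))$ by comparing with $u_\kappa(t)$ directly (giving $\leq E_0$ via energy monotonicity) rather than with $u_0$ via metric equivalence, and you are somewhat more explicit than the paper about arranging the unscaled time window needed for Lemma~\ref{lem:parabolic}.
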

\begin{proof}
	
	We begin by considering $(u_\kappa(t),g_\kappa(t))$. Note that we are now working with the rescaled equations \eqref{eq:flow2}.
	Observe that we have a uniform (for $t \in [0,T]$) lower bound $r_0$ on the injectivity radius $\inj_{g_\kappa(t)}$, in terms of $E(u_0,g_0)$ and $(M,g_0)$ by Lemma \ref{lem:injectivity}. We also have a uniform upper bound on $\norm{\tau_{g_\kappa}(u_\kappa)}_{L^2(g_\kappa(t))}$ for $t \in [\epsilon,T]$ in terms of $\epsilon$, $E(u_0,g_0)$, $M$ and $r_0$ by Lemma \ref{lem:odetau}. 
	
	Hence for any fixed time $t \geq \epsilon$, we can apply Corollary \ref{cor:bootstrap} to obtain estimates on $\norm{u_\kappa(t)}_{C^k(D_{r'},g_{eucl})}$, for any flat disk $D_{r'}$ corresponding (by taking a hyperbolic isothermal coordinate chart, as in Remark \ref{rmk:uniform}) to a geodesic ball $B_r(x)$, defined with respect to $g_\kappa(t)$ for e.g. $r = \frac{r_0}{4}$. 
	Note that this directly implies a bound on $\norm{u_\kappa(t)}_{C^k(B_r(x))}$, computed with respect to the hyperbolic metric on $D_{r'}$, as these norms are equivalent
	\begin{equation}
	\norm{u_\kappa(t)}_{C^k(B_r(x))} \leq C \norm{u_\kappa(t)}_{C^k(D_{r'},g_{eucl})}, 
	\end{equation} 
	with a constant $C$ only depending on $k$ and $\gamma$ (as $r$ can be bounded in terms of $\gamma$) which can be seen by a direct calculation. We can then bound $\norm{u_\kappa}_{C^k(M,g_\kappa(t))}$, as we can estimate it by simply taking the supremum of $\norm{u_\kappa(t)}_{C^k(B_r(x))}$ over all $x \in M$, and we obtain
	\begin{equation}
	\norm{u_\kappa(t)}_{C^k(M,g_\kappa(t))} \leq C(k, \epsilon, E(u_0,g_0), N, (M,g_0)).
	\end{equation}
	But now the result follows by Lemma \ref{lem:ckequiv}, as the norms $\norm{\cdot}_{C^k(M,g_\kappa(t))}$ for $t \in [0,T]$ are uniformly equivalent to $\norm{\cdot}_{C^k(M,g_0)}$.
	
	The corresponding claim for harmonic maps can be seen in the same way, now using Lemma \ref{lem:harmonic} instead to obtain the local bounds. This uses that by \cite{hartman} the energy of $\bar{u}(t)$ is controlled by the energy of $u(t)$ (both with respect to $g_\kappa(t)$).
\end{proof}

\subsection{$C^k$-closeness to harmonic maps using small tension}

We now exploit the small tension to see that the flow \eqref{eq:flow2} becomes $C^0$-close to a harmonic map at each (positive) time for sufficiently large $\kappa$, under appropriate topological assumptions on the initial map $u_0$. Specifically, we will assume that the homotopy class of $u_0$ contains no constant maps nor maps to closed geodesics in the target as well as that $N$ has strictly negative sectional curvature, which ensures that for any metric on $M$ there exists a unique harmonic map homotopic to $u_0$ (see \cite{hartman}).

\begin{lemma}
	\label{lem:C0}
	Let $M$ be a smooth closed oriented surface of genus $\gamma \geq 2$ and $(N,G)$ be a smooth closed Riemannian manifold, which we now additionally assume to have strictly negative sectional curvature. Fix a homotopy class $H$ of maps $u: M \to N$ that contains no constant maps nor maps to closed geodesics in the target. 
	Consider any smooth map $u \in H$ and $g \in \mathcal{M}_{-1}$ with injectivity radius ${\inj}_g \geq r_0 > 0$ and energy $E(u,g) \leq E_0$. Then given any $\epsilon > 0$ there exists some $\delta = \delta(\epsilon,r_0,E_0,N,H)> 0$ such that $\norm{\tau_{g}(u)}_{L^2(M,g)} < \delta$ implies $|u-\bar{u}|_{C^0} < \epsilon$ where $\bar{u}$ denotes the unique harmonic map with respect to $g$ in $H$.
\end{lemma}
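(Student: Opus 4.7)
I would argue by contradiction. Suppose the statement fails: then there exist $\epsilon_0 > 0$ and sequences $u_n \in H$, $g_n \in \mathcal{M}_{-1}$ with $\inj_{g_n} \geq r_0$, $E(u_n, g_n) \leq E_0$, and $\norm{\tau_{g_n}(u_n)}_{L^2(M,g_n)} \to 0$, yet $|u_n - \bar{u}_n|_{C^0} \geq \epsilon_0$, where $\bar{u}_n$ is the unique $g_n$-harmonic representative in $H$ supplied by Hartman. The goal is to extract a convergent subsequence on which both $u_n$ and $\bar{u}_n$ tend to the same limit.

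The first step is to control the metric: by Mumford's compactness theorem, the $r_0$-thick part of moduli space is compact, so we obtain diffeomorphisms $\psi_n : M \to M$ such that $\tilde{g}_n := \psi_n^* g_n$ converges smoothly to some $g_\infty \in \mathcal{M}_{-1}$. Set $\tilde{u}_n := u_n \circ \psi_n$ and $\tilde{\bar{u}}_n := \bar{u}_n \circ \psi_n$; since energy, tension, and $C^0$-distance are invariant under precomposition with diffeomorphisms, the hypotheses and the contradiction $|\tilde{u}_n - \tilde{\bar{u}}_n|_{C^0} \geq \epsilon_0$ are preserved, while the homotopy class of $\tilde{u}_n$ is $H \cdot [\psi_n]$, which along a further subsequence I may assume is a fixed class $H'$ still containing no constants or maps to closed geodesics (right-composition with a diffeomorphism preserves these properties).

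Next, using the nonpositive curvature assumption together with the small-tension bubbling argument of Lemma \ref{lem:energyconctension}, I obtain a uniform local $\epsilon_0$-regularity radius for $\tilde{u}_n$; combined with Lemma \ref{lem:tension_apriori} this gives uniform $W^{2,2}$ bounds on $\tilde{u}_n$ with respect to $\tilde{g}_n$. Sobolev embedding and Rellich compactness then yield, along a subsequence, $\tilde{u}_n \to u_\infty$ in $C^0$ (and weakly in $W^{2,2}$). Since $\tilde{g}_n \to g_\infty$ smoothly and $\tau_{\tilde{g}_n}(\tilde{u}_n) \to 0$ in $L^2$, passing to the limit in the harmonic map equation shows $u_\infty$ is $g_\infty$-harmonic; moreover $C^0$-convergence forces $u_\infty \in H'$ for $n$ large, so by Hartman's uniqueness $u_\infty$ coincides with the unique $g_\infty$-harmonic map $\bar{u}_\infty$ in $H'$. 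A parallel argument applied to $\tilde{\bar{u}}_n$ (which already has vanishing tension and satisfies the same a priori bounds via Lemma \ref{lem:harmonic}) yields $\tilde{\bar{u}}_n \to \bar{u}_\infty$ as well, so $|\tilde{u}_n - \tilde{\bar{u}}_n|_{C^0} \to 0$, contradicting the standing lower bound $\epsilon_0$.

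The main obstacle I expect is the interplay between Mumford compactness and the homotopy class: one must verify that the pulled-back class $H \cdot [\psi_n]$ does not drift into a class containing constants or closed-geodesic maps, and that Hartman's uniqueness still applies to the limiting class $H'$. Once this is settled, the bubbling/$W^{2,2}$-compactness part is a standard adaptation of techniques already used elsewhere in the paper, and the strict negativity of the curvature guarantees both the ruling out of nontrivial harmonic spheres and the rigidity needed to identify $u_\infty$ with $\bar{u}_\infty$.
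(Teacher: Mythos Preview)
Your argument is correct and follows essentially the same contradiction-plus-Mumford-compactness strategy as the paper. On the obstacle you flag: the paper does not try to fix the pulled-back class $H'$ in advance (which indeed is not justified a priori), but instead first obtains $C^0$-convergence $v_i \to \bar v$ via the bubbling lemma \cite[Lemma 3.2]{RT} (your $W^{2,2}$-compactness route is an equivalent substitute here, since the curvature hypothesis kills all bubbles), and only \emph{then} deduces that $\bar v$ is homotopic to $v_i$ for large $i$ and hence inherits the non-constant, non-geodesic property; for the convergence $\bar v_i \to \bar v$ the paper invokes the continuous-dependence result of Eells--Lemaire \cite{EL} rather than your compactness-plus-Hartman-uniqueness argument, but both are valid.
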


\begin{proof}
	Assume the claim was false, then there exists an $\epsilon > 0$ such that we can find a sequence $(u_i,g_i)$ satisfying the assumptions of the lemma with $\norm{\tau_{g_i}(u_i)}_{L^2(M,g_i)} \to 0$ and $|u_i-\bar{u_i}|_{C^0} \geq \epsilon$ for the (unique in $H$) $g_i$-harmonic map $\bar{u_i}$. 
	
	As $\inj_{g_i} \geq r_0 > 0$ we can apply Mumford compactness (as stated in Theorem \ref{thm:mumford1}) and obtain a subsequence $(u_i,g_i)$ together with diffeomorphisms $f_i : M \to M$ such that $f^*_i(g_i) \to h$ smoothly for some limit metric $h \in \mathcal{M}_{-1}$. We modify the maps $u_i$ with the same diffeomorphisms and denote $v_i = u_i \circ f_i$, $h_i = f^*_ig_i$. Then 
	$\norm{ \tau_{h_i}(v_i) }_{L^2(M,h_i)} \to 0$ and  $|v_i-\bar{v_i}|_{C^0} \geq \epsilon$, for any harmonic map (homotopic to $v_i$) $\bar{v_i}$ with respect to $h_i$. To see this, first note that this map is necessarily unique, as the homotopy class of $v_i$ also contains no constant maps nor maps to closed geodesics in the target. Hence it is given by $\bar{v_i} = \bar{u_i} \circ f_i$, and the $C^0$ distance considered is invariant under diffeomorphisms. 
	
	We can now see from the modified bubbling analysis carried out in \cite[Lemma 3.2]{RT} that there exists a harmonic map $\bar{v}:(M,h) \to N$ such that $v_i \to \bar{v}$ strongly in $W^{1,p}( M \setminus S)$ for all $p \in [0,\infty)$ where 
	\begin{equation}
	S := \{ x \in M: \text{for any neighbourhood $\Omega$ of $x$,} \, \limsup_{i \to \infty} E(u_i,g_i, \Omega) \geq \epsilon_0 \}.
	\end{equation} 
	In particular, the set $S$ of points where energy concentrates is empty in our case as a consequence of the curvature assumption on $N$. Hence $v_i \to \bar{v}$ in $C^0$ (as $W^{1,p} \hookrightarrow C^0$ for $p > 2$). It remains to see that $\bar{v_i}$ and $\bar{v}$ necessarily become close (in $C^0$) to deduce a contradiction to $|v_i-\bar{v_i}|_{C^0} \geq \epsilon$. 
	By the $C^0$-convergence of $v_i \to \bar{v}$ we see that $\bar{v}$ is homotopic to $v_i$ for all sufficiently large $i$. Even though the $f_i$ are not necessarily homotopic to the identity, it therefore follows that $\bar{v}$ is not a constant map nor maps to a closed geodesic in the target.  
	We finally obtain that $\bar{v_i} \to \bar{v}$ (in $C^0$) using the continuous dependence of the harmonic map on the metric in our setting (see \cite{EL}). In particular, this requires the strictly negative sectional curvature on $N$ in addition to the topological condition satisfied by $\bar{v}$.
	
\end{proof}

We finally obtain that the flow becomes close to a harmonic map at all (positive) times for large $\kappa$ in $C^k(M,g_0)$ by interpolation.
\begin{cor}
	\label{cor:ckclose}
	Take $M$ and $(N,G)$ to be as in the previous lemma. Assume the homotopy class of $u_0$ does not contain maps to closed geodesics in the target or constant maps. Given initial data $(u_0,g_0)$ let $0 <T \leq T_0$ with $T_0$ from Lemma \ref{lem:injectivity} and consider the sequence $(u_\kappa,g_\kappa)_{\kappa = 1}^{\infty}$ of solutions to \eqref{eq:flow2} with rescaled coupling constant $\kappa$, starting at $(u_0,g_0)$. Let $\epsilon > 0$ and $k$ be a nonnegative integer, then for any $t\geq \epsilon$ we have $\norm{u_\kappa(t)-\bar{u}_\kappa(t)}_{C^k(M,g_0)} \to 0$ as $\kappa \to \infty$, where $\bar{u}_\kappa(t)$ denotes the unique harmonic map with respect to $g_\kappa(t)$ in the homotopy class of $u_0$. Furthermore, this convergence is uniform on the interval $[\epsilon,T]$.
\end{cor}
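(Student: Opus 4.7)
The plan is to combine four ingredients already established: the small-tension estimate from Corollary \ref{lem:odetau}, the $C^0$-closeness to harmonic maps from Lemma \ref{lem:C0}, the uniform a priori bounds of Proposition \ref{thm:Ckbounds}, and a standard interpolation inequality.

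First I would verify that the hypotheses of Lemma \ref{lem:C0} can be applied uniformly in $t \in [\epsilon, T]$ and in $\kappa$. The energy is monotonically decreasing along \eqref{eq:flow2}, hence $E(u_\kappa(t), g_\kappa(t)) \leq E_0 := E(u_0, g_0)$; Lemma \ref{lem:injectivity} furnishes a uniform lower bound $\inj_{g_\kappa(t)} \geq r_0 > 0$ on $[0, T]$; the homotopy class $H$ of $u_0$ is preserved along the smooth flow; and the strictly negative sectional curvature of $N$ is a standing assumption. Thus the $\delta$ in Lemma \ref{lem:C0} depends only on $(\epsilon, r_0, E_0, N, H)$ and is independent of $t$ and $\kappa$. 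Applying Corollary \ref{lem:odetau} at the threshold $\epsilon/2$ now yields
\begin{equation*}
\sup_{t \in [\epsilon/2, T]} \norm{\tau_{g_\kappa(t)}(u_\kappa(t))}_{L^2(M, g_\kappa(t))} \leq \sqrt{C(\epsilon/2)}\,\kappa^{-1/2} \longrightarrow 0,
\end{equation*}
which combined with Lemma \ref{lem:C0} gives $\sup_{t \in [\epsilon, T]} |u_\kappa(t) - \bar{u}_\kappa(t)|_{C^0} \to 0$ as $\kappa \to \infty$.

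To upgrade this to $C^k(M, g_0)$-convergence, I would invoke Proposition \ref{thm:Ckbounds} at order $k+1$, applied both to $u_\kappa$ (with the offset $\epsilon/2$) and to the unique homotopic harmonic map $\bar{u}_\kappa$, yielding a uniform-in-$\kappa$ bound
\begin{equation*}
\sup_{t \in [\epsilon, T]} \bigl( \norm{u_\kappa(t)}_{C^{k+1}(M, g_0)} + \norm{\bar{u}_\kappa(t)}_{C^{k+1}(M, g_0)} \bigr) \leq C.
\end{equation*}
Writing $v_\kappa(t) := u_\kappa(t) - \bar{u}_\kappa(t)$ as an $\mathbb{R}^n$-valued map via the isometric embedding $N \hookrightarrow \mathbb{R}^n$, the standard interpolation inequality
\begin{equation*}
\norm{v}_{C^k(M, g_0)} \leq C \norm{v}_{C^0(M, g_0)}^{1/(k+1)} \norm{v}_{C^{k+1}(M, g_0)}^{k/(k+1)}
\end{equation*}
then produces the desired conclusion $\sup_{t \in [\epsilon, T]} \norm{u_\kappa(t) - \bar{u}_\kappa(t)}_{C^k(M, g_0)} \to 0$.

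Since all the hard analytic work has already been carried out in the preceding subsections, no substantive obstacle remains. The only careful point is precisely the first step, namely to confirm that every constant entering Lemma \ref{lem:C0} depends solely on quantities that are uniform in $t$ and $\kappa$; once this is checked, the remaining argument is pure interpolation and bookkeeping.
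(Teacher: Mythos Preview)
Your proposal is correct and follows essentially the same approach as the paper's own proof: obtain uniform $C^0$-closeness from Lemma \ref{lem:C0} (fed by the tension decay of Corollary \ref{lem:odetau} and the injectivity bound of Lemma \ref{lem:injectivity}), obtain uniform higher-order bounds on both $u_\kappa$ and $\bar u_\kappa$ from Proposition \ref{thm:Ckbounds}, and then interpolate. The only cosmetic difference is that the paper invokes Ehrling's lemma rather than the explicit multiplicative interpolation inequality you write down; both accomplish the same upgrade from $C^0$-convergence plus $C^{k+1}$-boundedness to $C^k$-convergence.
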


\begin{proof}
	Note that we have bounds (independent of $\kappa$, as long as $\kappa$ is large enough so that $\eta \leq \eta_0$ with $\eta_0$ from Lemma \ref{lem:parabolic}) on both $u_\kappa(t)$ and $\bar{u}_\kappa(t)$ in $C^k(M,g_0)$ by Proposition \ref{thm:Ckbounds} applied on $[\epsilon,T]$. But we also have $|u_\kappa(t) - \bar{u}_\kappa(t)|_0 \to 0$ as $\kappa \to \infty$ by Lemma \ref{lem:C0} for $t \in [\epsilon,T]$ (using the tension bound from Corollary \ref{lem:odetau} and the injectivity radius bound from Lemma \ref{lem:injectivity}).
	Therefore the claim follows by interpolation, using e.g. Ehrling's lemma.
	
\end{proof}

\subsection{Constructing the limit flow}

\subsubsection{Uniqueness of the limit flow}
In the setting of Theorem \ref{thm:rescaled}, a homotopy class satisfying the assumptions of Claim \ref{claim:2} defines a map $\mathcal{H}: \mathcal{M}_{-1} \to C^{\infty}(M,N)$ sending hyperbolic metrics to their associated (unique) harmonic map in the chosen homotopy class.
Then Theorem \ref{thm:rescaled} proves convergence of the flows \eqref{eq:flow2} to a flow satisfying
\begin{equation}
\label{eq:flowharmonic}
\pt g(t) = Re(\Phi(\mathcal{H}(g(t)),g(t)).
\end{equation}
In the proof of Theorem \ref{thm:rescaled}, we make use of the following uniqueness statement for this flow. We give the proof at the end of this section.
 
\begin{prop}
	\label{prop:uniq}
	Take $M$ as usual to be a smooth hyperbolic surface, and $(N,G)$ to be a smooth closed Riemannian manifold of strictly negative curvature. Further take initial data $(u_0,g_0)$ with $g_0 \in \mathcal{M}_{-1}$ and $u_0: M \to N$ a smooth map that cannot be homotoped to a constant map or to a map into a closed geodesic on $N$. Let $T>0$ and consider two differentiable one-parameter families of metrics $g_i(t): [0,T] \to \mathcal{M}_{-1}, i \in {1,2}$ such that:
	\begin{itemize}
		\item There exists some $\delta > 0$ such that $\inj_{g_i(t)}(M) \geq \delta$ for all $t \in [0,T]$.
		\item The metrics $g_i(t)$ satisfy \eqref{eq:flowharmonic} with initial data $(u_0,g_0)$.	
	\end{itemize}
	Then $g_1(t) = g_2(t)$ for all $t \in [0,T]$.
\end{prop}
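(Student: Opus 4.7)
The strategy is a Gronwall-type argument for the difference $h(t):=g_1(t)-g_2(t)$, which starts at $h(0)=0$ and satisfies
\begin{equation*}
\pt h(t)=Re\bigl(\Phi(\bar u_1(t),g_1(t))-\Phi(\bar u_2(t),g_2(t))\bigr),\qquad \bar u_i(t):=\mathcal{H}(g_i(t)).
\end{equation*}
The goal is a closed-loop estimate of the form $\norm{\pt h(t)}_{H^s}\leq C\norm{h(t)}_{H^s}$ for some fixed $s>3$, from which Gronwall's inequality combined with $h(0)=0$ forces $h\equiv 0$.

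The first task is to secure uniform a priori control on both $g_i(t)$ and the harmonic maps $\bar u_i(t)$. The injectivity radius bound $\inj_{g_i(t)}\geq\delta$, combined with Mumford compactness and the $L^2$-length estimate of Lemma \ref{lem:l2small} (which applies because $\pt g_i$ is the real part of a holomorphic quadratic differential whose $L^2$-norm is controlled by the energy), places $g_i(t)$ in a compact subset of $\mathcal{M}^s_{-1}$ modulo diffeomorphism; Lemma \ref{lem:harmonic} then supplies uniform $C^k$-bounds on $\bar u_i(t)$. The crux is the Lipschitz dependence of $\mathcal{H}$ on $g$: linearising the harmonic map equation $\tau_g(u)=0$ in $u$ at a smooth pair $(g_*,\bar u_*)$ yields the Jacobi operator
\begin{equation*}
J(V)=\lap_g V-\Rm^N(V,du(\cdot))\,du(\cdot),
\end{equation*}
which is an invertible self-adjoint elliptic operator under the strict negative sectional curvature of $N$ together with our topological assumption (the latter is precisely what rules out nontrivial Jacobi fields along $\bar u_*$, since any such field would generate a family of harmonic maps homotopic to $u_0$, contradicting Hartman's uniqueness \cite{hartman}). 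The implicit function theorem applied at each $(g_*,\mathcal{H}(g_*))$, patched via the uniform bounds above, then produces the gain-of-one-derivative estimate
\begin{equation*}
\norm{\bar u_1(t)-\bar u_2(t)}_{H^{s+1}}\leq C\,\norm{h(t)}_{H^s}.
\end{equation*}

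Because $\Phi(u,g)$ is a smooth pointwise expression in $(g,du)$, combining this with the a priori $C^k$-bounds on $\bar u_i$ yields
\begin{equation*}
\norm{\Phi(\bar u_1,g_1)-\Phi(\bar u_2,g_2)}_{H^s}\leq C\bigl(\norm{\bar u_1-\bar u_2}_{H^{s+1}}+\norm{h}_{H^s}\bigr)\leq C\norm{h}_{H^s},
\end{equation*}
which is precisely the closed-loop bound $\norm{\pt h(t)}_{H^s}\leq C\norm{h(t)}_{H^s}$, and Gronwall finishes the proof. The main obstacle I expect is obtaining the \emph{uniform}-in-$t$ Lipschitz dependence of $\mathcal{H}$ on $g$: the implicit function theorem only gives a local statement near each fixed metric, so one must patch these via the compactness of $\{g_i(t)\}_{t\in[0,T]}$ afforded by the injectivity radius and $L^2$-length bounds, while carefully tracking how the two distinct complex structures induced by $g_1(t)$ and $g_2(t)$ interact when their Hopf differentials are compared—this is precisely where the one-derivative gain for $\bar u$ becomes indispensable to absorb the loss introduced by the changing conformal structure.
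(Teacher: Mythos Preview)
Your proposal is correct and follows essentially the same strategy as the paper: a Gronwall argument on the metric difference, with the key input being Lipschitz dependence of $\mathcal{H}$ on $g$ (with one derivative gain) to close the loop on the Hopf-differential difference. The only cosmetic differences are that the paper works in $C^{2,\alpha}$ rather than $H^s$, cites \cite[Theorem 3.1]{EL} directly for the estimate $\norm{u_1-u_2}_{C^{k+1,\alpha}}\leq C\norm{g_1-g_2}_{C^{k,\alpha}}$ rather than sketching its derivation via the Jacobi operator and the implicit function theorem, and handles the uniformity issue you flag by an open--closed argument in $t$ (reducing to short time so that both curves remain in a single neighbourhood where the estimate from \cite{EL} applies) rather than by a compactness patching.
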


\subsubsection{Existence of the limit flow}
Having established uniqueness, we now use the $C^k$-bounds for the metric in Lemma \ref{lem:Ckmetric} to show existence of a limit flow for the flows \eqref{eq:flow2} as $\kappa \to \infty$. To this end, we consider time-dependent H\"older functions valued in some Banach space. 

\begin{defn}
	\label{def:timespace}
	Let $X$ be a Banach space, then we can define $C^{0}([0,T], X)$ to be the Banach space of bounded continuous functions valued in $X$ on $[0,T]$ equipped with the norm
	\begin{equation}
	\norm{f}_{C_X^{0}}  = \sup_{t \in [0,T]} \norm{f(t)}_X = \norm{f}_0.
	\end{equation} 
	
	Similarly for $\alpha \in (0,1)$ we denote by $C^{0,\alpha}([0,T], X)$ the Banach space of functions $f: [0,T] \to X$ which are H\"older continuous with exponent $\alpha$, with the canonical norm
	\begin{align}
	\norm{f}_{C_X^{0,\alpha}} =& \sup_{t_1,t_2 \in [0,T], t_1 \neq t_2} \frac{ \norm{f(t_1) - f(t_2)}_X } { |t_1-t_2|^{\alpha}} + \sup_{t \in [0,T]} \norm{f(t)}_X \\
	=& [f]_{\alpha} + \norm{f}_0.
	\end{align}
\end{defn}

In particular, if we let  $X = C^k(\Sym^2(T^*M),g_0)$ for any nonnegative integer $k$, we see that the metric $g(t)$ of a solution to \eqref{eq:flow2} lies in $C^{0,\frac{1}{2}}([0,T], X)$, as a consequence of Lemmas \ref{lem:Ckmetric}, \ref{lem:ckequiv} and \ref{lem:l2small}. We will need a compactness statement for these time-dependent H\"older spaces.

\begin{lemma}
	\label{lem:compact1}
	Assume $X$, $Y$ are Banach spaces such that $X$ compactly embeds into $Y$. Let $\alpha \in (0,1)$, then the embedding $C^{0,\alpha}([0,T], X) \hookrightarrow C^{0}([0,T], Y)$ is compact.
\end{lemma}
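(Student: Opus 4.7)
The plan is to prove this by a direct Arzelà--Ascoli type argument, exploiting that a bounded sequence in $C^{0,\alpha}([0,T], X)$ enjoys both pointwise precompactness in $Y$ (via the compact embedding $X \hookrightarrow Y$) and uniform equicontinuity in $Y$ (via the H\"older bound in $X$). Let $(f_n) \subset C^{0,\alpha}([0,T], X)$ be a bounded sequence; we must extract a subsequence converging in $C^0([0,T], Y)$. Write $\norm{f_n}_{C^{0,\alpha}_X} \leq M$ for all $n$, and note that the compact embedding $X \hookrightarrow Y$ is in particular continuous, so there is a constant $K$ with $\norm{\cdot}_Y \leq K \norm{\cdot}_X$.

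First I would establish pointwise precompactness. For each fixed $t \in [0,T]$, the set $\{f_n(t) : n \in \mathbb{N}\}$ is bounded in $X$ (by $M$), and thus relatively compact in $Y$ by the compact embedding hypothesis. Next I would verify equicontinuity in $Y$: for any $t_1, t_2 \in [0,T]$ and any $n$, the H\"older bound in $X$ combined with continuity of the embedding gives
\begin{equation}
\norm{f_n(t_1) - f_n(t_2)}_Y \leq K \norm{f_n(t_1) - f_n(t_2)}_X \leq K M |t_1 - t_2|^\alpha,
\end{equation}
so the family $\{f_n\}$ is uniformly equicontinuous as a family of maps $[0,T] \to Y$.

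Now I would run the standard diagonal extraction. Pick a countable dense subset $\{t_k\}_{k \in \mathbb{N}} \subset [0,T]$. By pointwise precompactness at $t_1$, extract a subsequence along which $f_n(t_1)$ converges in $Y$; from that subsequence extract a further one for which $f_n(t_2)$ converges, and so on. The diagonal subsequence, which I relabel $(f_n)$, then converges in $Y$ at every $t_k$. Uniform equicontinuity upgrades this to a Cauchy property in $C^0([0,T], Y)$: given $\eps > 0$, choose $\delta$ so that $KM \delta^\alpha < \eps/3$, cover $[0,T]$ by finitely many $\delta$-balls centered at points from $\{t_k\}$, and use pointwise convergence at those finitely many points to make $\norm{f_n(t_k) - f_m(t_k)}_Y < \eps/3$ for all large $n,m$. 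For arbitrary $t$, triangle inequality through the nearest chosen $t_k$ yields $\norm{f_n(t) - f_m(t)}_Y < \eps$, uniformly in $t$. Completeness of $C^0([0,T], Y)$ then gives a limit.

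I do not anticipate a genuine obstacle here; the argument is a textbook application of Arzel\`a--Ascoli in a Banach-space-valued setting, and the only subtlety is the transfer of regularity from $X$ to $Y$ via the embedding, which is handled in one line by continuity. The only thing worth being careful about is that the hypothesis says \emph{compact} embedding $X \hookrightarrow Y$, which I use twice --- once as continuity of the embedding for equicontinuity, and once as compactness for pointwise precompactness.
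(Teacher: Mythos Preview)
Your argument is correct and is precisely the Arzel\`a--Ascoli variant the paper alludes to (the paper itself only cites this as ``a variant of the Arzel\`a--Ascoli theorem'' and refers elsewhere for details). The two uses of the compact embedding you identify --- continuity for equicontinuity, compactness for pointwise precompactness --- are exactly what is needed.
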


\begin{proof}
	This is a variant of the Arzela-Ascoli theorem. See e.g. \cite{huxolThesis} for a proof. 
\end{proof}

\begin{lemma}
	\label{lem:compact2}
	For $k$ a nonnegative integer the embedding $$C^{k+1}(\Sym^2(T^*M),g_0) \hookrightarrow C^{k}(\Sym^2(T^*M),g_0)$$ is compact.
\end{lemma}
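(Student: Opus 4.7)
The plan is to invoke the Arzel\`a--Ascoli theorem. Let $\{h_n\}_{n=1}^{\infty}$ be a bounded sequence in $C^{k+1}(\Sym^2(T^*M),g_0)$, so there exists $K<\infty$ with $\norm{h_n}_{C^{k+1}(M,g_0)} \leq K$ for all $n$. Because $M$ is a smooth closed surface, I would cover it by finitely many coordinate charts on which $g_0$ and its Christoffel symbols are smooth and uniformly bounded. In each chart the $g_0$-covariant derivatives $\nabla^l h_n$ for $l \leq k+1$ can be estimated above and below by the coordinate partial derivatives of the components of $h_n$ up to order $l$, with constants depending only on (finitely many derivatives of) $g_0$.

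Next, for every $l \leq k$ the tensor field $\nabla^l h_n$ is uniformly bounded in $C^1(M,g_0)$ by $K$. By the mean value theorem, such a bound in each coordinate chart yields uniform equicontinuity of the components of $\nabla^l h_n$, while the uniform bound on $\nabla^l h_n$ itself provides pointwise precompactness. The classical Arzel\`a--Ascoli theorem then delivers a subsequence along which $\nabla^l h_n$ converges uniformly.

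I would then use a standard diagonal argument over the finitely many orders $l = 0, 1, \dots, k$ to extract a single subsequence (still denoted $\{h_n\}$) along which $\nabla^l h_n \to \xi_l$ uniformly for each such $l$. Setting $h := \xi_0$, the uniform convergence of derivatives forces $\nabla^l h = \xi_l$ for each $l \leq k$, so $h \in C^k(\Sym^2(T^*M),g_0)$ and $h_n \to h$ in $C^k(\Sym^2(T^*M),g_0)$. This proves that every bounded sequence in $C^{k+1}$ has a subsequence converging in $C^k$, which is precisely the compactness of the embedding.

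There is no real obstacle here; the only thing to be slightly careful about is the passage between the intrinsic $C^k$-norm defined via iterated covariant derivatives and partial derivatives in coordinate charts, but this is a routine computation using the smoothness and compactness of $(M,g_0)$ together with the identity $\nabla = \partial + \Gamma \star (\cdot)$ on tensor components.
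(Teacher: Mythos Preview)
Your argument is correct and is exactly the approach the paper indicates: the paper's proof simply states that this is standard, follows from the usual Arzel\`a--Ascoli theorem (for arbitrary vector bundles, not just $\Sym^2(T^*M)$), and gives a reference. You have spelled out precisely that Arzel\`a--Ascoli argument in coordinates, so there is nothing to add.
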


\begin{proof}
	This is standard (and in fact works for arbitrary vector bundles, not just $\Sym^2(T^*M)$), and follows from the usual Arzela-Ascoli theorem, see e.g. \cite[Corollary 9.14]{AH_RF}.
\end{proof}

Using this compactness we can now finish the proof of Theorem \ref{thm:rescaled}.

\begin{proof}[Proof of Theorem \ref{thm:rescaled}]
	
	We will use the shorthand notation $C^k$ to refer to the space \\ $C^k(\Sym^2(T^*M),g_0)$.
	Let $k \in \N$, then each $g_\kappa$ is an element of $C^{0,\frac{1}{2}}([0,T], C^{k+1})$, as observed above. We denote the norm on this space by $\norm{\cdot}_{C_{k+1}^{0,\frac{1}{2}}}$. Also observe that we have a uniform (independent of $\kappa$) bound on $\norm{g_\kappa}_{C_{k+1}^{0,\frac{1}{2}}}$ by 
	Lemmas \ref{lem:Ckmetric}, \ref{lem:ckequiv} and \ref{lem:l2small} together with the bound on the injectivity radius from Lemma \ref{lem:injectivity}. Hence the $g_\kappa$ form a bounded sequence in $C^{0,\frac{1}{2}}([0,T], C^{k+1})$, and therefore we can find a convergent subsequence in $C^{0} ([0,T], C^k)$, which we again denote by $g_\kappa$, by Lemmas \ref{lem:compact1} and \ref{lem:compact2}, which converges to a limit $g$ in $C^{0} ([0,T], C^k)$. By repeating this subsequence argument we see that the limit $g$ lies in $C^{0} ([0,T], C^k)$ for all $k \in \N$, and we may assume that $g_\kappa$ converges in $C^{0} ([0,T], C^k)$ (again, for all $k \in \mathbb{N}$).
	
	Note that $g(t)$ is necessarily a metric for all $t$ by the uniform equivalence of the metrics $g_\kappa(t)$ from Corollary \ref{cor:metricequiv}, and we have $g(t) \in \mathcal{M}_{-1}$ as in particular the curvatures $R(g_\kappa(t))$ converge (by taking $k \geq 2$).
	This proves Claim \ref{claim:1}.
	
	To see the next claim, fix some $\epsilon > 0$ and some $k \in \mathbb{N}$.
	By Proposition \ref{prop:uniq} it suffices to prove Claim \ref{claim:2} up to a choice of subsequence $g_\kappa$, and by Claim \ref{claim:1} we can assume that our chosen subsequence converges to some $g$ as above.
	
	We then first show that  $\ddt g_\kappa$ converges to a limit in the space $C^{0} ([\epsilon,T], C^k)$. We will denote the norm on this space by $\norm{\cdot}_{C_k^0}$ (see Definition \ref{def:timespace}). Set $\Psi(t) = \Phi(u(t),g(t))$, where $u$ denotes the curve of harmonic maps associated to the limit curve of metrics $g$. We claim that $\ddt g_\kappa \to Re(\Psi)$ in $C^{0} ([\epsilon,T], C^k)$. Let $\Psi_\kappa(t) = \Phi(u_\kappa(t),g_\kappa(t))$, then we can estimate 
	
	\begin{align}
	\label{eq:proj1}
	\begin{split}
	\norm{\ddt g_\kappa - Re(\Psi)}_{C_k^0} =& \norm{Re(P_{g_\kappa}(\Psi_\kappa)) - Re(\Psi)}_{C_k^0} \\ \leq& \norm{Re(P_{g_\kappa}(\Psi_\kappa)) - Re(\Psi_\kappa)}_{C_k^0} 
	+ \norm{Re(\Psi_\kappa) - Re(\Psi)}_{C_k^0}.
	\end{split}
	\end{align}
	
	Note that indeed $\ddt g_\kappa = Re(P_{g_\kappa}(\Psi_\kappa))$ is an element of $C^{0} ([\epsilon,T], C^k)$ for each $\kappa$, as each $(u_\kappa(t),g_\kappa(t))$ is a smooth flow. Thus it is sufficient to show that the right hand side of \eqref{eq:proj1} converges to 0 as $\kappa \to \infty$ (in particular, that will also show that $Re(\Psi) \in C^{0} ([\epsilon,T], C^k)$).
	
	We start by estimating $\norm{Re(P_{g_\kappa}(\Psi_\kappa)) - Re(\Psi_\kappa)}_{C_k^0}$. This requires us to bound $$\norm{Re(P_{g_\kappa(t)}(\Psi_\kappa(t))) - Re(\Psi_\kappa(t))}_{C^k}$$ at each time $t \in [\epsilon,T]$ uniformly in $t$. We first bound the $L^1$-norm of this tensor using an elliptic Poincar\'{e} estimate for quadratic differentials from \cite{RT2}. This tells us
	\begin{align}
	\norm{Re(P_{g_\kappa(t)}(\Psi_\kappa(t))) - Re(\Psi_\kappa(t))}_{L^1(M,g_\kappa(t))} &\leq \norm{P_{g_\kappa(t)}(\Psi_\kappa(t)) - \Psi_\kappa(t)}_{L^1(M,g_\kappa(t))} \\
	&\leq C \norm{ \bar{\partial} \Psi_\kappa(t)}_{L^1(M,g_\kappa(t))},
	\end{align}
	where $C>0$ is some constant only depending on the genus $\gamma$ of $M$. By a standard calculation, see e.g. \cite[Lemma 3.1]{RT}, we can estimate 
	\begin{equation}
	\norm{ \bar{\partial} \Psi_\kappa(t)}_{L^1(M,g_\kappa(t))} \leq \sqrt{2} \norm{ \tau_{g_\kappa(t)}(u_\kappa(t)) }_{L^2(M,g_\kappa(t))} E(u_\kappa(t),g_\kappa(t))^{\frac{1}{2}}.
	\end{equation}
	From Corollary \ref{lem:odetau} we obtain that $\norm{ \tau_{g_\kappa(t)}(u_\kappa(t)) }_{L^2(M,g_\kappa(t))} \to 0$ uniformly in $t$ (for $t \in [\epsilon,T]$), and as usual we can bound $E(u_\kappa(t),g_\kappa(t)) \leq E(u_0,g_0)$, hence 
	\begin{equation}
	\norm{Re(P_{g_\kappa(t)}(\Psi_\kappa(t))) - Re(\Psi_\kappa(t))}_{L^1(M,g_\kappa(t))} \to 0
	\end{equation} uniformly in $t$ (again for $t \in [\epsilon,T]$). Hence convergence in $C^k$ follows by a standard interpolation argument if we can bound the $C^{k+1}$-norm.
	We observe that we can write the real part of the Hopf differential explicitly (see \cite{rupflin_existence}) as 
	\begin{equation}
	\label{eq:realhopf}
	Re(\Psi_\kappa(t)) = 2u_\kappa(t)^*G - 2e(u_\kappa(t),g_\kappa(t))g_\kappa,
	\end{equation} 
	which is therefore bounded in $C^{k+1}$, again uniformly in $t$, as a consequence of Proposition \ref{thm:Ckbounds}, together with the uniform boundedness of $g_\kappa(t)$ in $C^{k+1}$. 
	To bound $Re(P_{g_\kappa(t)}(\Psi_\kappa(t)))$, we can estimate
	\begin{equation}
	\norm{Re(P_{g_\kappa(t)}(\Psi_\kappa(t)))}_{C^{k+1}} \leq C \norm{P_{g_\kappa(t)}(\Psi_\kappa(t))}_{L^1(M,g_\kappa(t))} \leq C \norm{\Psi_\kappa(t)}_{L^1(M,g_\kappa(t))},
	\end{equation}
	with a constant $C$ only depending on a lower bound for $\inj_{g_\kappa(t)}$ and $\gamma$. This is a consequence of the fact that the $C^k$-norms of holomorphic functions are controlled by their $L^1$-norm (see \cite{RThorizontal}) combined with the $L^1-L^1$-boundedness of the projection operator $P_g$ (as already used in Lemma \ref{lem:tauderiv}, from \cite[Proposition 4.10]{RTnonpositive}). Thus $\norm{Re(P_{g_\kappa}(\Psi_\kappa)) - Re(\Psi_\kappa)}_{C_k^0} \to 0$ as $\kappa \to \infty$.
	
	We proceed to estimate the second term $\norm{Re(\Psi_\kappa) - Re(\Psi)}_{C_k^0}$. We again do this by bounding $\norm{Re(\Psi_\kappa(t)) - Re(\Psi(t))}_{C^k}$ uniformly in $t$ for $t \in [\epsilon,T]$. Take $\bar{u}_\kappa(t)$ as usual to be the unique $g_\kappa(t)$-harmonic map homotopic to $u_0$, and set $\bar{\Psi}_\kappa(t) = \Phi(\bar{u}_\kappa(t), g_\kappa(t))$. We find
	\begin{equation}
	\norm{Re(\Psi_\kappa(t)) - Re(\Psi(t))}_{C^k} \leq \norm{Re(\Psi_\kappa(t)) - Re(\bar{\Psi}_\kappa(t))}_{C^k} + \norm{Re(\bar{\Psi}_\kappa(t)) - Re(\Psi(t))}_{C^k}.
	\end{equation}
	As a consequence of Corollary \ref{cor:ckclose} we have $u_\kappa(t) \to \bar{u}_\kappa(t)$ in $C^k$ uniformly in $t$ for $t \in [\epsilon,T]$, and hence $\norm{Re(\Psi_\kappa(t)) - Re(\bar{\Psi}_\kappa(t))}_{C^k}$ converges to 0 uniformly in $t$ (again for $t \in [\epsilon,T]$) by the explicit formula \eqref{eq:realhopf}. 
	
	Finally, it remains to estimate $\norm{Re(\bar{\Psi}_\kappa(t)) - Re(\Psi(t))}_{C^k}$. We already know that $g_\kappa(t)$ converges to $g(t)$ in $C^k$ as $\kappa \to \infty$ uniformly in $t$ (by construction of $g$), so  by \eqref{eq:realhopf} we only need to check that the same holds for $\bar{u}_\kappa(t)$ and $u(t)$ (which will also prove the last statement of Claim \ref{claim:2}, as we already know $u_\kappa(t) \to \bar{u}_\kappa(t)$ in $C^k$). Note that the conditions to apply the results from \cite{EL}, are satisfied: $N$ has strictly negative sectional curvature and the homotopy class of $u_0$ does not contain maps to closed geodesics in the target or constant maps. Thus $\norm{\bar{u}_\kappa(t) - u(t)}_{C_k} \to 0$ can be deduced for each $t \in [\epsilon,T]$ by applying \cite[Theorem 3.1]{EL} on neighbourhoods of metrics covering the limit curve $g$. As $g(t) : [0,T] \to C^k(\Sym^2(T^*M))$ is a continuous function, a finite such cover can be found, which implies that the convergence is uniform in $t$ as claimed. 
	Therefore we also have $\norm{Re(\Psi_\kappa) - Re(\Psi)}_{C_k^0} \to 0$ as $\kappa \to \infty$.
	
	This establishes $\norm{\ddt g_\kappa(t) - Re(\Psi(t))}_{C_k^0} \to 0$ as $\kappa \to \infty$. As a consequence (e.g. using the fundamental theorem of calculus, or more abstractly viewing the sequence $g_\kappa$ as a convergent sequence in the Banach space $C^1([0,T],C^k)$, with limit necessarily equal to the curve $g$), we see that $g(t)$ is differentiable in $t$ on $[\epsilon,T]$, with derivative given by $\ddt g(t) = Re(\Psi(t))$.
	
	Thus Claim \ref{claim:2} is proved.

\end{proof}

\begin{rmk}
	One finds that the injectivity radius of a solution to the Teichm\"uller harmonic map flow \eqref{eq:flow} is bounded away from $0$ when the initial map is \emph{incompressible}, which we take to mean that $u_0 : M \to N$ is homotopically nontrivial and its action on the fundamental group of $M$ has trivial kernel (hence any simple closed homotopically nontrivial curve is mapped to another homotopically nontrivial curve in the target) (see \cite{RT}). 
	
	Thus the proof of the above theorem can be adapted (assuming $u_0$ is incompressible) to see that the limit flow exists for all time. We can then calculate the evolution of the energy along this limit flow $(u(t),g(t))$ and find
	\begin{equation}
	\ddt E(u(t),g(t)) = - \int_M  \frac{1}{4}|Re( \Phi(u,g))|^2  dv_g,
	\end{equation}
	allowing us to find a subsequence of times $t_i \to \infty$ with $\Phi(u(t_i),g(t_i)) \to 0$ (this is basically the same argument as in \cite{RT}). We can then continue arguing as in \cite{RT}, in particular applying Mumford compactness (see Theorem \ref{thm:mumford1}) to find a limit metric $\bar{g}$ and a limit weakly conformal  harmonic map $\bar{u}$, i.e. constant map or branched minimal immersions (after possibly adjusting by diffeomorphisms).
	
	Hence, assuming the target $N$ has strictly negative sectional curvature and the initial map $u_0$ is incompressible, and satisfies the homotopy class assumptions of Theorem \ref{thm:rescaled}, the rescaled flow \eqref{eq:flow2} converges to a limit flow which deforms the initial map into a branched minimal immersion (or constant map) through homotopic harmonic maps.
\end{rmk}

\subsubsection{Proof of Uniqueness}
We obtain the claimed uniquess in Proposition \ref{prop:uniq} as a consequence of estimates for changes of harmonic maps under deformations of the metric as found in \cite{EL}.
\begin{proof}[Proof of Proposition \ref{prop:uniq}]
	By \eqref{eq:flowharmonic}, the $g_i$ are horizontal curves. Let $u_i(t) = \mathcal{H}(g_i(t))$ for $t \in [0,T]$.	
	We can calculate the evolution of the energies $E(u_i,g_i)$
	\begin{equation}
	\ddt E(u_i,g_i) = - \int_M  \frac{1}{4}|Re( \Phi(u_i,g_i))|^2  dv_{g_i},
	\end{equation}
	as a consequence of the first variation formula for $E$ (e.g. \cite{RT}). Hence we have a bound on the $L^2$-length (as in Lemma \ref{lem:l2small}) and can therefore use Lemma \ref{lem:Ckmetric} to find 
	\begin{equation}
	\label{eq:ck1}
	\norm{g_i(t)-g_i(s)}_{C^k(g_0)} \leq C \sqrt{t-s}
	\end{equation}
	with a constant $C = C(M, \delta, k, E(u_0,g_0))$, as long as $s,t \in [0,T_0]$ with $T_0 = T_0 (M, \delta, E(u_0,g_0))$. We therefore have short-term uniform $C^k$ bounds on $g_i(t)$, and hence the set
	\begin{equation}
	I = \{t \in [0,T]: g_1(t) = g_2(t)\}
	\end{equation}
	is closed. It remains to prove $I$ is open. To this end, we may assume $T$ to be small.
	
	By the theory of \cite{EL} (in particular Theorem 3.1), there exists some $C=C(M,N,\alpha,k)$ such that
	\begin{equation}
	\norm{u_1(t) - u_2(s)}_{C^{k+1,\alpha}} \leq C \norm{g_1(t)-g_2(s)}_{C^{k,\alpha}} \label{eq:Ck2}
	\end{equation}
	for $s,t \in [0,T]$ with $T$ now chosen sufficiently small so that $g_1(t)$, $g_2(t)$ stay in the neighbourhood of metrics constructed in \cite{EL}. Here and in the following we take these H\"older norms to be computed with respect to the inital metric $g_0$.
	From \eqref{eq:Ck2} we see that the H\"older norms of $u_i(t)$ are uniformly bounded in terms of $\mathcal{H}(g_0)$, i.e. in terms of $M, g_0$ and $u_0$ (alternatively, we could obtain this by an argument as in Lemma \ref{lem:harmonic}). We now calculate the evolution of $\norm{g_1(t)-g_2(t)}_{C^{2,\alpha}}$: 
	
	\begin{equation}
	\ddt \norm{g_1(t)-g_2(t)}_{C^{2,\alpha}} \leq \norm{Re(\Phi(u_1(t),g_1(t)) - Re(\Phi(u_2(t),g_2(t)) }_{C^{2,\alpha}}.
	\end{equation}
	
	Using the formula \eqref{eq:realhopf} we can compute
	
	\begin{equation}
	Re(\Phi(u_1,g_1) - Re(\Phi(u_2,g_2) = 2u_1^*G - 2u_2^*G + 2e(u_1,g_1)g_1 - 2e(u_2,g_2)g_2.
	\end{equation}
	We can estimate
	\begin{equation}
	\norm{u_1^*G-u_2^*G}_{C^{2,\alpha}} \leq C \norm{u_1-u_2}_{C^{3,\alpha}} 
	\end{equation}
	with a constant $C$ only depending on higher $C^k$ bounds of $u_1$ and $u_2$, so $C=(M,g_0,u_0)$ as above.
	We next estimate $e(u_1,g_1)g_1 - e(u_2,g_2)g_2$:
	\begin{align}
	\norm{e(u_1,g_1)g_1 - e(u_2,g_2)g_2}_{C^{2,\alpha}} \leq
	\norm{e(u_1,g_1)g_1 - e(u_1,g_1)g_2}_{C^{2,\alpha}} \\
	+ \norm{e(u_1,g_1)g_2 - e(u_2,g_1)g_2}_{C^{2,\alpha}}\\
	+ \norm{e(u_2,g_1)g_2 - e(u_2,g_2)g_2}_{C^{2,\alpha}}.
	\end{align}
	We estimate each term on the right 
	\begin{align}
	\norm{e(u_1,g_1)g_1 - e(u_1,g_1)g_2}_{C^{2,\alpha}} \leq C\norm{g_1-g_2}_{C^{2,\alpha}} \\
	\norm{e(u_1,g_1)g_2 - e(u_2,g_1)g_2}_{C^{2,\alpha}} \leq C\norm{u_1-u_2}_{C^{3,\alpha}}  \\
	\norm{e(u_2,g_1)g_2 - e(u_2,g_2)g_2}_{C^{2,\alpha}}  \leq
	C\norm{g_1-g_2}_{C^{2,\alpha}},
	\end{align}
	where we again absorb $C^k$ norms of $u_1$,$u_2$ and now also $g_1,g_2$ into $C=C(M,g_0,u_0)$.
	
	Summarising, we obtain
	\begin{equation}
	\ddt \norm{g_1(t)-g_2(t)}_{C^{2,\alpha}} \leq C(\norm{g_1(t)-g_2(t)}_{C^{2,\alpha}} + \norm{u_1(t)-u_2(t)}_{C^{3,\alpha}} ) \leq C\norm{g_1(t)-g_2(t)}_{C^{2,\alpha}}
	\end{equation}
	where we used \eqref{eq:Ck2}. Hence for $t \in [0,T]$ we find
	\begin{equation}
	\norm{g_1(t)-g_2(t)}_{C^{2,\alpha}} \leq \norm{g_1(0)-g_2(0)}_{C^{2,\alpha}} e^{-Ct}.
	\end{equation}
	
	As $g_1(0)=g_2(0)$ we see that $g_1$ and $g_2$ coincide on $[0,T]$, and obtain the claimed uniqueness.
\end{proof}

\appendix
\section{Appendix}

We use the following compactness result for hyperbolic metrics $\{g_i\}$.

\begin{thm}[{Mumford compactness, e.g. \cite[Appendix C]{tromba}}]
	\label{thm:mumford1}
	Let $\epsilon > 0$ and $g_i \in \mathcal{M}_{-1}$ be hyperbolic metrics such that the lengths of their shortest closed geodesics $\ell(g_i)$ satisfy $\ell(g_i) \geq \epsilon$, then, after passing to a subsequence in $i$, there exists a sequence of orientation-preserving diffeomorphisms $f_i: M \to M$ and $\bar{g} \in \mathcal{M}_{-1}$ such that $f_i^*g_i \to \bar{g}$ smoothly.
\end{thm}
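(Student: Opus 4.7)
The plan is to combine Bers's theorem on short pants decompositions with an argument in Fenchel--Nielsen coordinates, using finiteness of topological types and the action of Dehn twists to achieve compactness.

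First I would invoke Bers's theorem: every closed hyperbolic surface of genus $\gamma \geq 2$ admits a pants decomposition by $3\gamma - 3$ disjoint simple closed geodesics whose lengths are bounded above by a constant $L_\gamma$ depending only on $\gamma$. Applied to each $(M,g_i)$, this produces a pants decomposition $\mathcal{P}_i$ whose cuffs all have length in the compact interval $[\epsilon, L_\gamma]$, using the hypothesis $\ell(g_i)\geq\epsilon$ for the lower bound.

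Second, since $M$ admits only finitely many isotopy classes of pants decomposition modulo the mapping class group, after passing to a subsequence all $\mathcal{P}_i$ lie in a single orbit. Fix a reference pants decomposition $\mathcal{P}_0$ and orientation-preserving diffeomorphisms $\varphi_i\colon M\to M$ taking $\mathcal{P}_0$ to $\mathcal{P}_i$. Replacing $g_i$ by $\varphi_i^* g_i$, I may assume $\mathcal{P}_0$ is the Bers pants decomposition of each $g_i$. Read off the Fenchel--Nielsen coordinates $(\ell_{i,k},\theta_{i,k})_{k=1}^{3\gamma-3}$ of $\varphi_i^* g_i$ relative to $\mathcal{P}_0$. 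The lengths lie in $[\epsilon,L_\gamma]$ and so converge after subsequence. The twists can then be normalised into a bounded fundamental domain by precomposing $\varphi_i$ with a suitable product of Dehn twists along the cuffs of $\mathcal{P}_0$, which shifts $\theta_{i,k}$ by integer multiples of $\ell_{i,k}$; after a further subsequence the normalised twists also converge. Call the overall diffeomorphisms (Bers marking composed with Dehn twists) $f_i$.

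Finally, the limit data $(\bar\ell_k,\bar\theta_k)$ determine a hyperbolic metric $\bar g\in\mathcal{M}_{-1}$ by the classical construction: glue the unique hyperbolic pair of pants with prescribed cuff lengths along each cuff using the prescribed twist. Smooth convergence $f_i^* g_i\to\bar g$ follows because the hyperbolic metric on a pair of pants depends real-analytically on its cuff lengths (the pair is isometrically rigid given these), and the gluing identification depends smoothly on the twist parameters; one verifies convergence separately on a collar of each cuff and on the interior of each pair of pants, then assembles via a partition of unity adapted to $\mathcal{P}_0$. The hard part is this last step: converting convergence of the finitely many Fenchel--Nielsen parameters into genuine smooth convergence of metric tensors requires the diffeomorphisms $f_i$ to identify $(M,f_i^* g_i)$ with the glued model $(M,\bar g)$ in a way that is smooth across the cuffs, which is the place where the explicit structure of the Bers marking and the analyticity of the pants metric in its cuff lengths are essential.
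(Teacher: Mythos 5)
The paper does not prove Theorem~\ref{thm:mumford1}; it is stated as a standard result and cited to Tromba's book, so there is no in-paper proof to compare against.

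Your proposal is a correct sketch of one of the two standard proofs of Mumford compactness, the ``Teichm\"uller-theoretic'' one via Bers's constant and Fenchel--Nielsen coordinates. The outline is sound: Bers gives a pants decomposition with cuff lengths in $[\epsilon,L_\gamma]$ (the lower bound from the systole hypothesis), finiteness of topological types of pants decompositions lets you fix a marking after a subsequence, Dehn twists normalise the twist parameters into a fundamental domain, and real-analytic dependence of the pants metric on its cuff lengths plus smooth dependence of the Fenchel--Nielsen gluing on the twist then upgrades coordinate convergence to smooth convergence of the pulled-back metric tensors. You are right to flag the last step as the place where detail is needed; one also needs that the limiting cuff lengths stay $\geq\epsilon$ so that $\bar g$ is a genuine non-degenerate hyperbolic metric, which does follow from your length bounds. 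The reference the paper cites (Tromba, Appendix C) takes a genuinely different, more ``Riemannian-geometric'' route: with constant curvature $-1$, systole $\geq\epsilon$ (hence injectivity radius $\geq\epsilon/2$), and area fixed by Gauss--Bonnet, one gets uniform diameter bounds, and then Cheeger--Gromov convergence of metrics of bounded geometry (together with elliptic regularity for the constant-curvature equation) gives smooth subconvergence modulo diffeomorphisms directly, with no use of pants decompositions or Fenchel--Nielsen coordinates. The Fenchel--Nielsen proof buys an explicit description of the diffeomorphisms $f_i$ (marking change plus Dehn twists) and stays entirely within hyperbolic surface theory; the Cheeger--Gromov proof is shorter if one has that machinery available and generalises immediately to higher-dimensional settings and to non-constant curvature with appropriate bounds.
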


\bibliographystyle{plain}

\bibliography{bibTH}  
\end{document}